\newtheorem{theorem}{Theorem}
\newtheorem{definition}[theorem]{Definition}
\newtheorem{proposition}[theorem]{Proposition}
\newtheorem{lemma}[theorem]{Lemma}
\newtheorem{corollary}[theorem]{Corollary}
\newtheorem{example}[theorem]{Example}
\newtheorem{conjecture}[theorem]{Conjecture}
\newtheorem{remark}[theorem]{Remark}
\newcommand{\db}[1]{\mathopen{\{\!\!\{}#1\mathopen{\}\!\!\}}}
\begin{document}
\title{Modified Double Poisson Brackets}
\author{S. Arthamonov}
\address{Department of Mathematics, Rutgers, The State University of New Jersey,\\
110 Frelinghuysen Rd., Piscataway, NJ 08854, USA.
}
\email{semeon.artamonov@rutgers.edu}
\date{}
\begin{abstract}
We propose a non-skew-symmetric generalization of the original definition of double Poisson Bracket by M. Van den Bergh. It allows one to explicitly construct more general class of $H_0$-Poisson structures on finitely generated associative algebras. We show that modified double Poisson brackets inherit certain major properties of the double Poisson brackets.
\end{abstract}
\maketitle

\section{Introduction}

Application of the noncommutative geometry program to symplectic manifolds was originated in \cite{Kontsevich'1993}. Following general philosophy formulated by M.~Kontsevich any algebraic property that makes geometric sense is mapped to its commutative counterpart by the functor $\mathrm{Rep}_N$:
\begin{align*}
\mathrm{Rep}_N:\quad\mathrm{fin.\;gen.\;Associative\;algebras} \rightarrow\mathrm{Affine\; schemes},
\end{align*}
which assigns to a finitely generated associative algebra $\mathcal A$ a scheme of its' $N\times N$ matrix representations\footnote{Throughout the text we assume the ground field to be $\mathbb C$. All unadorned tensor products, algebras, and schemes are over $\mathbb C$ unless specified otherwise.}
\begin{align*}
\qquad\mathrm{Rep}_N(\mathcal A)=Hom(\mathcal A,Mat_N(\mathbb C)).
\end{align*}
In line with this general philosophy, M.~Van~den~Bergh \cite{VandenBergh'2008} proposed a definition of the double Poisson bracket on associative algebra which induces a conventional Poisson bracket on the coordinate ring of matrix representations.

On the contrary, W.~Crawley-Boevey \cite{Crawley-Boevey'2011,Crawley-BoeveyEtingofGinzburg'2007} suggested related, yet another definition of the noncommutative analogue of the Poisson bracket, the so-called $H_0$-Poisson structure. The latter has weaker requirements and in general provides a conventional Poisson bracket only on the moduli space of representations. Double Poisson bracket induces an $H_0$-Poisson structure but not vice versa.

One of the major advantages of the double Poisson bracket as opposed to an $H_0$-Poisson structure is that for a finitely generated associative algebra it is defined completely by its' action on generators. This allows one to provide numerous explicit examples of double Poisson brackets \cite{PichereauVandeWeyer'2008,BartocciTacchella'2016} and even carry out certain partial classification problems \cite{OdesskiiRubtsovSokolov'2013}.

In this note we provide an extension of the original ideas of M.~Van~den~Bergh and W.~Crawley-Boevey. We define a notion of the \textit{modified} double Poisson bracket (see Definition \ref{def:ModifiedDoubleBracket}) which allows one to construct explicitly more general examples of $H_0$-Poisson brackets on finitely generated algebras. We support our definition with new examples of modified double Poisson brackets in Sec. \ref{sec:ExamplesOfModifiedDoublePoissonBrackets} and calculate corresponding dimensions of symplectic leaves of the induced Poisson structures on the moduli space.

In Section \ref{sec:GeneralPoissonPolyvectors} we use the algebra of noncommutative poly-vector fields introduced in \cite{VandenBergh'2008} to construct non-skew-symmetric biderivations. We introduce the notion of a modified double Poisson bivector and present an essential example.

Finally, in Section \ref{sec:RepresentationAlgebras} we investigate brackets on representation algebras induced by the modified double Poisson brackets. We show that some recent results of G.~Massuyeau and V.~Turaev \cite{MassuyeauTuraev'2015} can be extended beyond skew-symmetric case as well.

\section{Modified double Poisson bracket}

Let $\mathcal A=\mathbb C\langle x^{(1)},\dots,x^{(k)}\rangle/\mathcal R$ be an associative algebra over $\mathbb C$, which is finitely generated by $\{x^{(1)},\dots x^{(k)}\}$, possibly with some finite number of relations $\mathcal R$.
\begin{definition}
\label{def:ModifiedDoubleBracket}
A modified double Poisson bracket on $\mathcal A$ is a map $\mathcal A\otimes\mathcal A\rightarrow\mathcal A\otimes\mathcal A$ s.t. for all $a,b,c\in\mathcal A$
\begin{subequations}
\begin{align}
&\db{a\otimes bc}=(b\otimes 1)\db{a\otimes c}+\db{a\otimes b}(1\otimes c)
\label{eq:ModifiedDoublePoissonLeibnitz1}\\
&\db{ab\otimes c}=(1\otimes a)\db{b\otimes c}+\db{a\otimes c}(b\otimes 1)
\label{eq:ModifiedDoublePoissonLeibnitz2}\\
&\{a\otimes\{b\otimes c\}\}-\{b\otimes\{a\otimes c\}\}=\{\{a\otimes b\}\otimes c\}\quad\mathrm{where}\quad\{\_\}:=\mu\circ \db{\_}
\label{eq:ModifiedDoublePoissonJacobi}\\
&\{a,b\}+\{b,a\}=0\,\bmod [\mathcal A,\mathcal A]
\label{eq:ModifiedDoublePoissonWeakSkewSymmetry}
\end{align}
\end{subequations}
\end{definition}

The fact that we do not require skew-symmetry in a sense of Van den Bergh $\db{a,b}=-\db{b,a}^{op}$ is the major difference with the case studied in \cite{VandenBergh'2008,OdesskiiRubtsovSokolov'2012}. To distinguish with definitions introduced in \cite{VandenBergh'2008} we call this object \textbf{Modified} double Poisson Bracket. In Section \ref{sec:ExamplesOfModifiedDoublePoissonBrackets} we show that there are essential examples of the modified double Poisson brackets.

\begin{corollary}
Composition with the multiplication map $\{\_\}:\mathcal A\otimes\mathcal A\rightarrow\mathcal A$ defines an $H_0$-Poisson structure, namely for all $a,b,c\in\mathcal A$
\begin{subequations}
\begin{align}
&\{a,bc\}=b\{a,c\}+\{a,b\}c,\label{eq:H0Derivation}\\
&\{ab,c\}=\{ba,c\},\label{eq:HOCyclicInvariance}\\
&\{a,\{b,c\}\}-\{b,\{a,c\}\}=\{\{a,b\},c\}, \label{eq:H0LeftLodayJacobi}\\
&\{a,b\}+\{b,a\}\equiv0\bmod [\mathcal A,\mathcal A].
\end{align}
\end{subequations}
\label{cor:H0FromModifiedDoublePoisson}
\end{corollary}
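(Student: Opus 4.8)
The plan is to obtain each of the four displayed identities by pushing the corresponding axiom of Definition \ref{def:ModifiedDoubleBracket} through the multiplication map $\mu\colon\mathcal A\otimes\mathcal A\to\mathcal A$; recall $\{a,b\}:=\mu\db{a\otimes b}$. Two of the four come for free: identity \eqref{eq:H0LeftLodayJacobi} is just \eqref{eq:ModifiedDoublePoissonJacobi} rewritten in the comma notation, and the last identity is \eqref{eq:ModifiedDoublePoissonWeakSkewSymmetry} verbatim. So the actual work is to deduce the Leibniz rule \eqref{eq:H0Derivation} and the cyclic invariance \eqref{eq:HOCyclicInvariance} from the two bimodule Leibniz rules \eqref{eq:ModifiedDoublePoissonLeibnitz1}--\eqref{eq:ModifiedDoublePoissonLeibnitz2}.

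For \eqref{eq:H0Derivation} I would apply $\mu$ to \eqref{eq:ModifiedDoublePoissonLeibnitz1} and use that, for $\omega\in\mathcal A\otimes\mathcal A$, one has $\mu\bigl((b\otimes 1)\omega\bigr)=b\,\mu(\omega)$ and $\mu\bigl(\omega(1\otimes c)\bigr)=\mu(\omega)\,c$; the right-hand side then collapses to $b\{a,c\}+\{a,b\}c$, which is \eqref{eq:H0Derivation}.

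For the cyclic invariance I would write $\db{b\otimes c}=\sum_i u_i\otimes v_i$ and $\db{a\otimes c}=\sum_j p_j\otimes q_j$. Applying $\mu$ to \eqref{eq:ModifiedDoublePoissonLeibnitz2}, and using $\mu\bigl((1\otimes a)(u_i\otimes v_i)\bigr)=u_i a v_i$ together with $\mu\bigl((p_j\otimes q_j)(b\otimes 1)\bigr)=p_j b q_j$, gives $\{ab,c\}=\sum_i u_i a v_i+\sum_j p_j b q_j$. Running the same computation for $\db{ba\otimes c}$ interchanges the two sums but produces the identical element, so $\{ab,c\}=\{ba,c\}$ holds already in $\mathcal A$, which is \eqref{eq:HOCyclicInvariance}. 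All of this is routine bookkeeping once the bimodule conventions are fixed; there is no real obstacle, and the one substantive point is conceptual: it is exactly this cyclic invariance that forces Definition \ref{def:ModifiedDoubleBracket} to carry \emph{both} Leibniz rules, since in Van den Bergh's skew-symmetric setting \eqref{eq:ModifiedDoublePoissonLeibnitz1} is recovered from $\db{a,b}=-\db{b,a}^{\mathrm{op}}$ but here it must be postulated.

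Finally I would remark that \eqref{eq:H0Derivation} and \eqref{eq:HOCyclicInvariance} together show that $\{-,-\}$ descends to a well-defined map $\mathcal A/[\mathcal A,\mathcal A]\times\mathcal A/[\mathcal A,\mathcal A]\to\mathcal A/[\mathcal A,\mathcal A]$ with each $\{\bar a,-\}$ induced by the derivation $\{a,-\}$ of $\mathcal A$, while \eqref{eq:H0LeftLodayJacobi} together with the weak skew-symmetry (which becomes genuine skew-symmetry on $\mathcal A/[\mathcal A,\mathcal A]$) upgrades the left Loday--Jacobi relation to the ordinary Jacobi identity there; this is precisely an $H_0$-Poisson structure in the sense of Crawley-Boevey.
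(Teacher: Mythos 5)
Your proof is correct: the paper states this corollary without any proof, and your verification---applying $\mu$ to \eqref{eq:ModifiedDoublePoissonLeibnitz1} to get the Leibniz rule, and observing for \eqref{eq:HOCyclicInvariance} that both $\mu\bigl((1\otimes a)(u\otimes v)\bigr)=uav$ and $\mu\bigl((p\otimes q)(b\otimes 1)\bigr)=pbq$ insert the extra factor between the two Sweedler legs, so that $\db{ab\otimes c}$ and $\db{ba\otimes c}$ have the same image under $\mu$ on the nose---is exactly the routine computation the author leaves implicit, with \eqref{eq:H0LeftLodayJacobi} and the weak antisymmetry being restatements of \eqref{eq:ModifiedDoublePoissonJacobi} and \eqref{eq:ModifiedDoublePoissonWeakSkewSymmetry}. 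The only quibble is your closing aside, which reverses Van den Bergh's conventions (he postulates the outer derivation rule \eqref{eq:ModifiedDoublePoissonLeibnitz1} and recovers \eqref{eq:ModifiedDoublePoissonLeibnitz2} from skew-symmetry, not the other way around); this does not affect the argument.
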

In particular, the latter implies that $\{\_\}:\mathcal A\otimes\mathcal A\rightarrow\mathcal A$ factors through $\{\_\}:\mathcal A/[\mathcal A,\mathcal A]\otimes\mathcal A\rightarrow\mathcal A$ which we denote by the same brackets.

\begin{corollary}
An $H_0$-Poisson structure $\{\_\}$ in turn induces a Lie Algebra structure $\{\_\}^{Lie}:\mathcal A/[\mathcal A,\mathcal A]\otimes \mathcal A/[\mathcal A,\mathcal A]\rightarrow\mathcal A/[\mathcal A,\mathcal A]$ on abelianization $\mathcal A_\natural=\mathcal A/[\mathcal A,\mathcal A]$ of $\mathcal A$.
\end{corollary}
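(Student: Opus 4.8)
The plan is to verify, in order, that $\{\_\}$ descends to a $\mathbb{C}$-bilinear map $\mathcal{A}_\natural\otimes\mathcal{A}_\natural\to\mathcal{A}_\natural$, that this descended bracket is skew-symmetric, and that it obeys the Jacobi identity; nothing else is needed for a Lie algebra structure.

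For well-definedness, recall from Corollary~\ref{cor:H0FromModifiedDoublePoisson} and the remark following it that cyclic invariance \eqref{eq:HOCyclicInvariance} already makes $\{\_\}$ factor through the first tensor slot, giving $\{\_\}\colon\mathcal{A}_\natural\otimes\mathcal{A}\to\mathcal{A}$. Composing with the projection $\mathcal{A}\twoheadrightarrow\mathcal{A}_\natural$, it remains to see that the second slot also passes to the quotient, i.e.\ $\{a,[\mathcal{A},\mathcal{A}]\}\subseteq[\mathcal{A},\mathcal{A}]$. This is immediate from the derivation rule \eqref{eq:H0Derivation}:
\[
\{a,bc-cb\}=b\{a,c\}+\{a,b\}c-c\{a,b\}-\{a,c\}b=[b,\{a,c\}]+[\{a,b\},c]\in[\mathcal{A},\mathcal{A}].
\]
Thus $\{\_\}^{Lie}\colon\mathcal{A}_\natural\otimes\mathcal{A}_\natural\to\mathcal{A}_\natural$, $(\bar a,\bar b)\mapsto\overline{\{a,b\}}$, is a well-defined $\mathbb{C}$-bilinear map, and skew-symmetry of $\{\_\}^{Lie}$ is exactly the last relation of Corollary~\ref{cor:H0FromModifiedDoublePoisson}, namely $\{a,b\}+\{b,a\}\in[\mathcal{A},\mathcal{A}]$.

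For the Jacobi identity I would reduce the left Loday--Jacobi identity \eqref{eq:H0LeftLodayJacobi} modulo $[\mathcal{A},\mathcal{A}]$. Because $\{\_\}^{Lie}$ is now known to depend only on the classes of both arguments, inner brackets may be flipped at will: $\{b,\{a,c\}\}\equiv-\{b,\{c,a\}\}$ and $\{\{a,b\},c\}\equiv-\{c,\{a,b\}\}$ in $\mathcal{A}_\natural$, using skew-symmetry and bilinearity. Substituting these into \eqref{eq:H0LeftLodayJacobi} turns it into
\[
\{a,\{b,c\}\}^{Lie}+\{b,\{c,a\}\}^{Lie}+\{c,\{a,b\}\}^{Lie}=0,
\]
the Jacobi identity in cyclic form. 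Together with bilinearity and skew-symmetry this exhibits $(\mathcal{A}_\natural,\{\_\}^{Lie})$ as a Lie algebra.

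The whole argument is essentially bookkeeping, and the only step that genuinely has to be got right --- hence the natural place for a slip --- is the well-definedness claim: the Jacobi reduction quietly relies on being able to replace $\{b,c\}$ and $\{a,b\}$ by arbitrary representatives of their commutator classes inside another bracket, so one must be sure that $\{\_\}$ has been shown to descend in \emph{both} arguments before performing that substitution.
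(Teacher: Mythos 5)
Your argument is correct and complete; the paper itself states this corollary without proof (it is the standard fact, going back to Crawley-Boevey, that an $H_0$-Poisson structure yields a Lie bracket on $\mathcal A_\natural$), so there is no in-paper proof to compare against. Your verification supplies exactly the right bookkeeping: descent in the first slot from cyclic invariance (\ref{eq:HOCyclicInvariance}), descent in the second slot from the derivation rule (\ref{eq:H0Derivation}) via $\{a,[b,c]\}=[b,\{a,c\}]+[\{a,b\},c]$, skew-symmetry from the last relation of Corollary \ref{cor:H0FromModifiedDoublePoisson}, and the cyclic Jacobi identity obtained by reducing the left Loday identity (\ref{eq:H0LeftLodayJacobi}) modulo $[\mathcal A,\mathcal A]$ and flipping the two brackets whose arguments appear in the ``wrong'' order. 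You are also right to flag that the flip $\{b,\{a,c\}\}\equiv-\{b,\{c,a\}\}$ is only legitimate once descent in the \emph{second} argument has been established, since $\{a,c\}$ and $-\{c,a\}$ agree only modulo commutators; having proved that descent first, your reduction is sound.
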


\section{Poisson brackets on the moduli space of representations}

\label{sec:BracketsOnMatNC}

Double derivation property introduced to Definition \ref{def:ModifiedDoubleBracket} at the same time provides a constructive definition for a certain subclass of $H_0$-Poisson structures and allows one to establish a precise correspondence between $H_0$-Poisson structures and geometry. Throughout this section we will review main ideas of pioneering papers \cite{Crawley-Boevey'1999, Crawley-BoeveyEtingofGinzburg'2007, VandenBergh'2008} and apply them to the context of the Modified Double Poisson Bracket.

\subsection{Representation scheme}

As before, let $\mathcal A=\langle x^{(1)},\dots,x^{(k)}\rangle/\mathcal R$ be a finitely generated associative algebra with a finite set of relations $\mathcal R$. Each representation of $\mathcal A$ in $Mat_N(\mathbb C)$ can be defined by the image of the generators, let
\begin{align}
\varphi(x^{(i)})=\left(\begin{array}{ccc}
x^{(i)}_{11}&\dots&x^{(i)}_{1N}\\
\vdots&&\vdots\\
x^{(i)}_{N1}&\dots&x^{(i)}_{NN}
\end{array}\right).
\label{eq:NXNRepresentationDefinition}
\end{align}
Representations of $\mathcal A$ then form an affine scheme $\mathcal V$  with a coordinate ring $\mathbb C[\mathcal V]:=\mathbb C\left[x^{(i)}_{j,k}\right
]/\varphi(\mathcal R)$. Denote as $\mathbb C_{\mathcal V}$ --- the corresponding sheaf of rational functions. Then $\varphi:\,\mathcal V\times\mathcal A\rightarrow Mat_N(\mathbb C)$. For a general point $m\in\mathcal V$ map $\varphi(m,\_\,)$ provides an $N$-dimensional matrix representation of $\mathcal A$. Hereinafter, we often omit the first argument of $\varphi$ where it is assumed to be a function on $\mathcal V$.

\subsection{Moduli space of representations}

There is a natural action of $GL_N(\mathbb C)\circlearrowleft Mat_N(\mathbb C)$ which corresponds to the change of basis in the underlying finite dimensional module. It induces the $GL_N(\mathbb C)$ action on the sheaf of rational functions $\mathbb C_{\mathcal V}$. We denote as $\mathbb C[\mathcal V]^{inv}\subset\mathbb C[\mathcal V]$ (respectively $\mathbb C_{\mathcal V}^{inv}\subset\mathbb C_{\mathcal V}$) the subalgebra of $GL_N(\mathbb C)$ invariant elements. We refer to the orbit of the $GL_N(\mathbb C)$ action as an isomorphism class of representations and thus $\mathbb C[\mathcal V]^{inv}$ is the coordinate ring of the corresponding moduli space.

One can construct elements of $\mathbb C[\mathcal V]^{inv}$ by taking traces $\varphi_{ii}(x)$ for different $x\in A$, clearly the image would be invariant under the cyclic permutations of generators in each monomial and thus would depend only on the element of the cyclic space $A_\natural=\mathcal A/[\mathcal A,\mathcal A]$. This induces a map $\varphi_0:\mathcal A/[\mathcal A,\mathcal A]\rightarrow\mathbb C[\mathcal V]^{inv}$ from $\mathcal A/[\mathcal A,\mathcal A]$ to the invariant subalgebra $\mathbb C[\mathcal V]^{inv}$. Denote the image of this map by $\mathcal H:=\varphi_0(\mathcal A/[\mathcal A,\mathcal A])$.
\begin{lemma}{\cite{Procesi'1976}}
$C[\mathcal V]^{inv}$ is generated by $\mathcal H$ as a commutative algebra.
\label{lemm:Processi}
\end{lemma}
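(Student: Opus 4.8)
The plan is to invoke the classical first fundamental theorem of invariant theory for $GL_N$ acting by simultaneous conjugation on tuples of matrices. First I would note that $\mathbb C[\mathcal V]$ is a quotient of the polynomial ring $\mathbb C[x^{(i)}_{j,k}]$, which is the coordinate ring of $Mat_N(\mathbb C)^{\oplus k}$ (the space of $k$-tuples of $N\times N$ matrices), and that the $GL_N(\mathbb C)$-action is the restriction of simultaneous conjugation; since taking $GL_N$-invariants is exact on rational representations in characteristic zero, it suffices to prove the statement for the free algebra, i.e. before imposing the relations $\varphi(\mathcal R)$, and then pass to the quotient. So I reduce to showing: $\mathbb C[Mat_N^{\oplus k}]^{GL_N}$ is generated as a commutative algebra by the functions $m\mapsto \operatorname{tr}\big(\varphi(m, x^{(i_1)})\cdots \varphi(m, x^{(i_\ell)})\big)$, i.e.\ by the traces of words in the generic matrices, which is exactly the image of $\mathcal H=\varphi_0(\mathcal A/[\mathcal A,\mathcal A])$.

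The key step is Procesi's theorem (building on work of Sibirskii and others): the ring of simultaneous conjugation invariants of $k$ generic $N\times N$ matrices is generated by traces of words in those matrices, and in fact one only needs words of bounded length (the Nagata--Higman / Razmyslov bound), though for the present statement the bounded-length refinement is not needed. I would cite \cite{Procesi'1976} directly for this; since the lemma is attributed to that reference there is no obligation to reprove it. The one genuinely non-formal point to address is the passage to the quotient by $\varphi(\mathcal R)$: one must observe that the surjection $\mathbb C[Mat_N^{\oplus k}]\twoheadrightarrow \mathbb C[\mathcal V]$ is $GL_N$-equivariant, hence induces a surjection on invariant subrings $\mathbb C[Mat_N^{\oplus k}]^{GL_N}\twoheadrightarrow \mathbb C[\mathcal V]^{GL_N}=\mathbb C[\mathcal V]^{inv}$ by reductivity of $GL_N$ and vanishing of higher group cohomology in characteristic zero (equivalently, the Reynolds operator is compatible with quotients). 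Under this surjection the trace-of-word generators upstairs map precisely onto the elements $\varphi_0([w])\in\mathcal H$, so generators map to generators and the lemma follows.

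The main obstacle, such as it is, is not the invariant theory — that is black-boxed via \cite{Procesi'1976} — but making sure the equivariant-surjectivity-on-invariants argument is stated cleanly: one needs that $\mathcal R$ is a set of relations in $\mathcal A$, so $\varphi(\mathcal R)$ is a $GL_N$-stable ideal (it is generated by matrix entries of $\varphi(r)$ for $r\in\mathcal R$, and conjugation permutes these entries linearly), which guarantees the quotient ring carries the action and that the Reynolds operator descends. Once that is in place the proof is essentially a one-line reduction to Procesi's classical result together with exactness of invariants under the linearly reductive group $GL_N(\mathbb C)$.
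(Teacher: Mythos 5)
Your proposal is correct and matches the paper's treatment: the paper offers no proof at all, simply citing Procesi's first fundamental theorem for matrix invariants, and your argument is the standard expansion of that citation (traces of words generate the invariants of generic matrices, plus exactness of $GL_N$-invariants in characteristic zero to descend along the $GL_N$-stable ideal $\varphi(\mathcal R)$). The reduction to the free algebra and the equivariance of the quotient map are handled correctly, so there is nothing to add.
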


\begin{example}
If $A$ happens to be commutative, the representation functor $\mathrm{Rep}_1$ for $N=1$ will map it to itself. Moreover $\mathbb C_{\mathcal V}^{inv}$ will coincide with $\mathbb C_{\mathcal V}$ for this case.
\label{rem:Abelian}
\end{example}

\begin{example}
The simplest scheme here corresponds to the representations of $A=\mathbb C\langle x^{(1)},\dots,x^{(k)}\rangle$ --- free algebra with $k$ generators. In the absence of relations, the corresponding scheme is birational to $\mathbb C^{kN^2}$, a $kN^2$-dimensional vector space. And the corresponding sheaf of rational functions is nothing but the field of rational functions in $kN^2$ variables.
\end{example}

\begin{example}
Another interesting special case corresponds to the so-called smooth algebras. Finitely generated algebra $A$ is called smooth if\; $\Omega^1:=\ker\mu=\{a_1\otimes a_2\,| a_1a_2=0,\;a_1,a_2\in A\}$ is projective as an inner bimodule. This guarantees that the representation scheme is actually a smooth affine variety. This case was in details studied in \cite{Crawley-BoeveyEtingofGinzburg'2007}.
\end{example}

The major advantage of the Poisson formalism as compared to the Symplectic formalism is that it can be easily generalized beyond the smooth case.

\subsection{Bracket}

Define induced bracket $\{,\}^{\mathcal V}:\mathbb C_{\mathcal V}\otimes\mathbb C_{\mathcal V}\rightarrow\mathbb C_{\mathcal V}$ on generators  $x^{(m)}_{ij}$ of $\mathbb C[\mathcal V]$ by
\begin{subequations}
\begin{align}
\left\{x^{(m)}_{ij},x^{(n)}_{kl}\right\}^{\mathcal V}=\varphi\left(\db{x^{(m)}\otimes x^{(n)}}\right)_{(kj),(il)}
\label{eq:BracketOnCoordinateSpace}
\end{align}
And then extend it to the entire $\mathbb C_{\mathcal V}$ by Leibnitz identities w.r.t. both arguments. Namely, for all $a,b,c\in\mathbb C_{\mathcal V}$.
\begin{align}
\{ab,c\}^{\mathcal V}=&a\{b,c\}^{\mathcal V}+b\{a,c\}^{\mathcal V},
\label{eq:BracketOnCoordinateSpaceLeftLeibnitzIdentity}\\
\{a,bc\}^{\mathcal V}=&c\{a,b\}^{\mathcal V}+b\{a,c\}^{\mathcal V}.
\label{eq:BracketOnCoordinateSpaceRightLeibnitzIdentity}
\end{align}
\label{eq:BracketNDefinition}
\end{subequations}

As opposed to \cite{VandenBergh'2008}, bracket (\ref{eq:BracketNDefinition}) in the context of Definition \ref{def:ModifiedDoubleBracket} is not necessary skew-symmetric and thus is not yet a Poisson bracket on $\mathbb C_{\mathcal V}$. It is a famous result of W.~Crawley-Boevey \cite{Crawley-Boevey'2011} that any $H_0$-Poisson structure induces a conventional Poisson bracket on the moduli space of representations. In addition to that, we show in Proposition \ref{prop:H0BracketNXN} that it comes with a Lie module action on the coordinate space of representations. In the case of bracket induced by the modified double Poisson bracket both are nothing but restrictions of (\ref{eq:BracketNDefinition}) to $\mathbb C_{\mathcal V}^{inv}\otimes\mathbb C_{\mathcal V}^{inv}$ and $\mathbb C_{\mathcal V}^{inv}\otimes\mathbb C_{\mathcal V}$ respectively.

Easy to check that the above extension (\ref{eq:BracketOnCoordinateSpaceLeftLeibnitzIdentity}) -- (\ref{eq:BracketOnCoordinateSpaceRightLeibnitzIdentity}) is consistent with the double Leibnitz identity and relations $\varphi(\mathcal R)$ in the coordinate ring $\mathbb C[\mathcal V]$, namely
\begin{lemma}
Equations (\ref{eq:BracketNDefinition}) define a unique linear map $\{,\}^{\mathcal V}:\mathbb C[\mathcal V]\otimes\mathbb C[\mathcal V]\rightarrow\mathbb C[\mathcal V]$ given by
\label{lemm:BracketCoordinateRing}
\begin{align}
\forall x,y\in A:\,\{\varphi(x)_{ij},\varphi(y)_{kl}\}^{\mathcal V}= \varphi(\db{x,y}')_{kj}\varphi(\db{x,y}'')_{il}
\label{eq:NXNDoubleBracket}
\end{align}
\label{lemm:NXNDoubleBracket}
\end{lemma}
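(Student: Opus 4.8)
The plan is to verify that the formula \eqref{eq:NXNDoubleBracket} is forced by \eqref{eq:BracketNDefinition}, and then to check that the resulting map is well-defined modulo the ideal $\varphi(\mathcal R)$. First I would establish \eqref{eq:NXNDoubleBracket} by induction on the total degree of monomials $x,y$ in the generators $x^{(1)},\dots,x^{(k)}$. The base case, $x=x^{(m)}$ and $y=x^{(n)}$, is exactly the definition \eqref{eq:BracketOnCoordinateSpace} once one rewrites the index pairing $(kj),(il)$ in Sweedler-type notation as $\varphi(\db{x^{(m)},x^{(n)}}')_{kj}\,\varphi(\db{x^{(m)},x^{(n)}}'')_{il}$, using that $\varphi$ is an algebra homomorphism so that the matrix entries of a tensor factor multiply entrywise in the displayed index slots. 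For the inductive step, write $y=y_1y_2$ with $y_1,y_2$ of lower degree; apply the Leibniz rule \eqref{eq:BracketOnCoordinateSpaceRightLeibnitzIdentity} on the left-hand side and the modified double Leibniz rule \eqref{eq:ModifiedDoublePoissonLeibnitz1} on the right-hand side, and match terms. Here the key bookkeeping point is that the left multiplication by $(b\otimes 1)$ and right multiplication by $(1\otimes c)$ in \eqref{eq:ModifiedDoublePoissonLeibnitz1} translate, after applying $\varphi$ and reading off the $(kj)$ and $(il)$ entries, into ordinary matrix products $\varphi(y_2)$ and $\varphi(y_1)$ inserted in precisely the positions dictated by \eqref{eq:BracketOnCoordinateSpaceRightLeibnitzIdentity}. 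The reduction in the first argument is symmetric, using \eqref{eq:ModifiedDoublePoissonLeibnitz2} against \eqref{eq:BracketOnCoordinateSpaceLeftLeibnitzIdentity}; I would do that induction in parallel (or reduce the first slot first, then the second).

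Having a closed formula on the free algebra, I would next check consistency, i.e.\ that the two Leibniz extensions \eqref{eq:BracketOnCoordinateSpaceLeftLeibnitzIdentity}--\eqref{eq:BracketOnCoordinateSpaceRightLeibnitzIdentity} do not clash when a monomial admits several factorizations. This follows automatically from \eqref{eq:NXNDoubleBracket}, since the right-hand side of \eqref{eq:NXNDoubleBracket} is manifestly a single well-defined element of $\mathbb C[\mathcal V]$ depending only on $x$ and $y$ as elements of the \emph{free} algebra; associativity of multiplication in $\mathbb C[\mathcal V]$ then guarantees that the inductively defined bracket agrees with it regardless of the chosen factorization. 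Concretely, one shows that both the left Leibniz rule and the right Leibniz rule, when iterated, reproduce \eqref{eq:NXNDoubleBracket}, so they are mutually compatible.

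Finally, and this is the step I expect to be the main obstacle, I must show the map descends to the quotient $\mathbb C[\mathcal V]=\mathbb C[x^{(i)}_{j,k}]/\varphi(\mathcal R)$, i.e.\ that $\{\varphi(r)_{ij},\,\cdot\,\}^{\mathcal V}$ and $\{\,\cdot\,,\varphi(r)_{kl}\}^{\mathcal V}$ vanish for every relation $r\in\mathcal R$ (equivalently for $r$ in the two-sided ideal generated by $\mathcal R$). The point is that $\db{\_,\_}$ is by hypothesis a well-defined map on $\mathcal A\otimes\mathcal A$, so $\db{r,y}=0$ and $\db{x,r}=0$ in $\mathcal A\otimes\mathcal A$ for $r\in\mathcal R$; but formula \eqref{eq:NXNDoubleBracket} a priori involves a \emph{representative} of $\db{x,y}$ in $\mathcal A\otimes\mathcal A$, and I must argue that applying $\varphi\otimes\varphi$ and then the entrywise product $(\,)_{kj}(\,)_{il}$ kills the ambiguity. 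This works because $\varphi$ factors through $\mathcal A$ (its image of $\mathcal R$ is zero by definition of $\mathbb C[\mathcal V]$), so $\varphi\otimes\varphi$ annihilates $\mathcal R\otimes\mathcal A+\mathcal A\otimes\mathcal R$; hence the composite $x,y\mapsto \varphi(\db{x,y}')_{kj}\varphi(\db{x,y}'')_{il}$ factors through $\mathcal A\otimes\mathcal A$ and in particular vanishes whenever $x$ or $y$ lies in $\mathcal R$. Uniqueness of the induced linear map $\{,\}^{\mathcal V}$ on $\mathbb C[\mathcal V]\otimes\mathbb C[\mathcal V]$ is then immediate, since generators and the Leibniz rules determine it.
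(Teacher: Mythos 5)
Your proposal is correct and follows essentially the same route as the paper: both verify that the closed formula \eqref{eq:NXNDoubleBracket} and the Leibniz extensions \eqref{eq:BracketOnCoordinateSpaceLeftLeibnitzIdentity}--\eqref{eq:BracketOnCoordinateSpaceRightLeibnitzIdentity} agree by matching the modified double Leibniz identities against the matrix-entry Leibniz rules, then observe that $\varphi(\mathcal R)$ lies in the kernel so the bracket descends to $\mathbb C[\mathcal V]$. You are somewhat more explicit than the paper about the well-definedness under different factorizations and the descent to the quotient, but the underlying argument is the same.
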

\begin{proof}
Define $X=\varphi(x)$ and $Y=\varphi(y)$. In what follows assume the summation over repeating indexes
\begin{align*}
\{X_{ij},Y_{kl}Z_{lm}\}^{\mathcal V}=\varphi(\db{x,yz}')_{kj}\varphi(\db{x,yz}'')_{im}.
\end{align*}
On the other hand
\begin{align*}
\db{x,yz}=&(y\otimes 1)\db{x,z}+\db{x,y}(1\otimes z)\\
=&y\db{x,z}'\otimes\db{x,z}''+\db{x,y}'\otimes\db{x,y}''z
\end{align*}
Which leads us to
\begin{align*}
\{X_{ij},Y_{kl}Z_{lm}\}^{\mathcal V}=&Y_{kl}\varphi(\db{x,z}')_{lj}\varphi(\db{x,z}'')_{im} +\varphi(\db{x,y}')_{kj}\varphi(\db{x,y}'')_{il}Z_{lm}\\
=&Y_{kl}\{X_{ij},Z_{lm}\}^{\mathcal V}+\{X_{ij},Y_{kl}\}^{\mathcal V}Z_{lm}
\end{align*}
By the same derivation
\begin{align*}
\{X_{il}Y_{lj},Z_{km}\}^{\mathcal V}=X_{il}\{Y_{lj},Z_{km}\}^{\mathcal V}+Y_{lj} \{X_{il},Z_{km}\}^{\mathcal V}.
\end{align*}
Now, using the fact that $\mathcal A$ is finitely generated by $x^{(m)}$ we conclude that $(\ref{eq:BracketOnCoordinateSpaceLeftLeibnitzIdentity})$ and $(\ref{eq:BracketOnCoordinateSpaceRightLeibnitzIdentity})$ uniquely extend $\{\_,\_\}^{\mathcal V}$ on pairs of monomials. Moreover, defining ideal $\varphi(\mathcal R)\subset\mathbb C[\mathcal V]$ for the coordinate ring $\mathbb C[\mathcal V]$ is thus within the left and right kernel of $\{\_,\_\}^{\mathcal V}$. So $\{\_,\_\}^{\mathcal V}$ extends uniquely to $\mathbb C[\mathcal V]$.
\end{proof}

Equation (\ref{eq:NXNDoubleBracket}) immediately implies
\begin{corollary}
For all $a,b\in A,$ $\{\varphi_0(a),\varphi(b)\}^{\mathcal V}=\varphi(\{a,b\}).$
\label{cor:DerivationHomomorphism}
\end{corollary}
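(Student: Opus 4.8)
The plan is to unwind $\varphi_0(a)$ as the trace $\varphi(a)_{ii}$ (summation over the repeated index understood) and apply Lemma \ref{lemm:NXNDoubleBracket} entrywise. Recall that $\{a,b\}=\mu\circ\db{a,b}=\db{a,b}'\db{a,b}''$ in the Sweedler-type notation already in use. First I would note that $\{\varphi_0(a),\varphi(b)_{kl}\}^{\mathcal V}$ makes sense because, by Lemma \ref{lemm:BracketCoordinateRing}, the bracket $\{\_,\_\}^{\mathcal V}$ is defined on all of $\mathbb C[\mathcal V]$; since $\varphi_0(a)=\varphi(a)_{ii}$ and the bracket is linear in its first slot,
\begin{align*}
\{\varphi_0(a),\varphi(b)_{kl}\}^{\mathcal V}=\{\varphi(a)_{ii},\varphi(b)_{kl}\}^{\mathcal V}.
\end{align*}

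Next I would apply formula (\ref{eq:NXNDoubleBracket}) with $x=a$, $y=b$, specialized to $j=i$ and summed over the diagonal:
\begin{align*}
\{\varphi(a)_{ii},\varphi(b)_{kl}\}^{\mathcal V}=\varphi(\db{a,b}')_{ki}\,\varphi(\db{a,b}'')_{il}.
\end{align*}
Contracting the repeated index $i$ and using that $\varphi$ is an algebra homomorphism (so matrix multiplication of the images corresponds to multiplication in $\mathcal A$), this equals $\varphi(\db{a,b}'\db{a,b}'')_{kl}=\varphi(\{a,b\})_{kl}$. Since $k,l$ are arbitrary free indices, we conclude $\{\varphi_0(a),\varphi(b)\}^{\mathcal V}=\varphi(\{a,b\})$ as $Mat_N$-valued functions on $\mathcal V$, which is the assertion.

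There is essentially no obstacle: the statement is an immediate consequence of Lemma \ref{lemm:NXNDoubleBracket}, and the only points requiring care are bookkeeping — keeping the Einstein summation convention consistent so that the free indices $k,l$ match on both sides, and observing that $\varphi_0(a)$ being a single scalar function (the trace) is exactly what collapses the double index structure of (\ref{eq:NXNDoubleBracket}) into a single matrix-valued identity. It is worth remarking that this computation is precisely what exhibits $\{\varphi_0(a),-\}^{\mathcal V}$ as a derivation of $\mathbb C[\mathcal V]$ intertwining $\varphi$ with the $H_0$-bracket, which is the germ of Proposition \ref{prop:H0BracketNXN}.
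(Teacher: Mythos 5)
Your proposal is correct and follows exactly the paper's own argument: set $j=i$ in formula (\ref{eq:NXNDoubleBracket}), sum over the diagonal, and use that $\varphi$ is an algebra homomorphism to recognize $\varphi(\db{a,b}')_{ki}\varphi(\db{a,b}'')_{il}=\varphi(\db{a,b}'\db{a,b}'')_{kl}=\varphi(\{a,b\})_{kl}$. The only difference is expository — your remarks on index bookkeeping and the link to Proposition \ref{prop:H0BracketNXN} are not in the paper's one-line computation but add nothing substantively new.
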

\begin{proof}
\begin{align}
\forall x,y\in A:\,\{\varphi(x)_{ii},\varphi(y)_{kl}\}^{\mathcal V}=& \varphi(\db{x,y}')_{ki}\varphi(\db{x,y}'')_{il}= \varphi(\db{x,y}'\db{x,y}'')_{kl}=\varphi(\mu(\db{x,y}))_{kl}=\nonumber\\ =&\varphi(\{x,y\}^{\mathcal V})_{kl}.
\label{eq:NXNLodayBracket}
\end{align}
\end{proof}

\begin{lemma}
Equations (\ref{eq:BracketNDefinition}) define a unique linear map $\{,\}^{\mathcal V}:\;\mathbb C_{\mathcal V}\otimes\mathbb C_{\mathcal V}\rightarrow\mathbb C_{\mathcal V}$.
\end{lemma}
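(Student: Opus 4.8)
The plan is to extend the bracket from the finitely generated coordinate ring $\mathbb{C}[\mathcal V]$ to its sheaf of rational functions $\mathbb{C}_{\mathcal V}$ by the usual quotient rule, and to check this is well-defined and consistent. First I would observe that $\mathbb{C}_{\mathcal V}$ is a localization of $\mathbb{C}[\mathcal V]$ (more precisely, a sheaf whose sections over affine opens are localizations $S^{-1}\mathbb{C}[\mathcal V]$ at multiplicative sets $S$ of nonzerodivisors), so it suffices to show that a biderivation on a commutative ring extends uniquely to any localization. For a single localization variable, if $f\in\mathbb{C}[\mathcal V]$ is invertible in the localization, then applying the Leibnitz rules (\ref{eq:BracketOnCoordinateSpaceLeftLeibnitzIdentity})--(\ref{eq:BracketOnCoordinateSpaceRightLeibnitzIdentity}) to $f\cdot f^{-1}=1$ forces
\begin{align*}
\{f^{-1},c\}^{\mathcal V}=-f^{-2}\{f,c\}^{\mathcal V},\qquad \{a,f^{-1}\}^{\mathcal V}=-f^{-2}\{a,f\}^{\mathcal V},
\end{align*}
and more generally $\{a/f,\,c/g\}^{\mathcal V}$ is determined by the quotient rule in both slots. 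This gives uniqueness immediately from Lemma \ref{lemm:BracketCoordinateRing}.

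For existence and well-definedness I would define, for $p,q\in\mathbb{C}[\mathcal V]$ and $f,g$ in the appropriate multiplicative set,
\begin{align*}
\left\{\frac{p}{f},\frac{q}{g}\right\}^{\mathcal V}:=\frac{1}{f^2g^2}\Bigl(fg\{p,q\}^{\mathcal V}-gq\{p,g\}\cdot 0\Bigr)
\end{align*}
--- rather, the standard expansion obtained by formally applying both Leibnitz rules --- and then verify independence of the representative by checking that replacing $p/f$ with $(ph)/(fh)$ leaves the value unchanged; this reduces to the biderivation identity on $\mathbb{C}[\mathcal V]$ together with cancellation, since $\mathbb{C}_{\mathcal V}$ is an integral domain on each irreducible component (or one localizes away from zerodivisors). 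The Leibnitz identities (\ref{eq:BracketOnCoordinateSpaceLeftLeibnitzIdentity})--(\ref{eq:BracketOnCoordinateSpaceRightLeibnitzIdentity}) for the extended bracket then follow by direct computation from the quotient rule, exactly as in the classical passage from a Poisson algebra to its field of fractions. One should also note that the extension is compatible with restriction maps between opens, so it genuinely defines a map of sheaves.

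The main obstacle — really the only subtle point — is sheaf-theoretic bookkeeping rather than algebra: one must ensure the localizations are taken at nonzerodivisors so that the quotient-rule formula is unambiguous, and that the construction glues over an open cover. Once that is set up, well-definedness is the routine verification that the biderivation property on $\mathbb{C}[\mathcal V]$ (Lemma \ref{lemm:NXNDoubleBracket}) propagates through the localization, which is standard. I would therefore present the proof as: (i) reduce to a single affine localization $S^{-1}\mathbb{C}[\mathcal V]$ with $S$ consisting of nonzerodivisors; (ii) show the Leibnitz rules force the quotient-rule formula, giving uniqueness; (iii) check the formula is independent of the chosen fraction representative and satisfies (\ref{eq:BracketOnCoordinateSpaceLeftLeibnitzIdentity})--(\ref{eq:BracketOnCoordinateSpaceRightLeibnitzIdentity}), giving existence; (iv) remark that these extensions are compatible with restrictions and hence assemble into the desired map $\{,\}^{\mathcal V}:\mathbb{C}_{\mathcal V}\otimes\mathbb{C}_{\mathcal V}\to\mathbb{C}_{\mathcal V}$.
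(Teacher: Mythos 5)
Your proposal is correct and follows essentially the same route as the paper: extend the bracket from $\mathbb C[\mathcal V]$ to localizations at nonzerodivisors via the quotient rule forced by the Leibnitz identities (which gives uniqueness), and then assemble these extensions over distinguished open subsets to obtain the map on $\mathbb C_{\mathcal V}$. You are somewhat more explicit than the paper about verifying independence of the fraction representative, but this is additional detail within the same argument rather than a different approach.
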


\begin{proof}
Taking into account Lemma \ref{lemm:NXNDoubleBracket} it would be enough to prove that the bracket $\{,\}^{\mathcal V}$ can be extended to a properly localized ring. Let $R$ be a $\mathbb C$-algebra s.t. $\{,\}^{\mathcal V}:R\otimes R\rightarrow R$ is well defined and satisfies (\ref{eq:BracketOnCoordinateSpaceLeftLeibnitzIdentity}) -- (\ref{eq:BracketOnCoordinateSpaceRightLeibnitzIdentity}). For any multiplicative subset $S\subset R$ and $a\in S,\;b\in R$ we immediately get  $\{a^{-1},b\}^{\mathcal V}=-a^{-2}\{a,b\}^{\mathcal V}$ and $\{b,a^{-1}\}^{\mathcal V}=-a^{-2}\{b,a\}^{\mathcal V}$. This provides a unique extension of $\{,\}^{\mathcal V}$ to $S^{-1}R$.

Thus for each distinguished open subset $\mathcal V_f\subset\mathcal V$ we have a unique extension of $\{,\}^{\mathcal V}$ to $\Gamma(\mathcal V_f,{\mathcal O}_{\mathcal V})$. Now denote by $S(\mathcal V_f)\subset\Gamma(X_f,{\mathcal O}_{\mathcal V})$ the set of functions which are not a zero divisor on any stalk, we have a unique extension of $\{,\}^{\mathcal V}$ to $S(\mathcal V_f)^{-1}\Gamma(\mathcal V_f,{\mathcal O}_{\mathcal V})=\Gamma(\mathcal V_f,\mathbb C_{\mathcal V})$.
\end{proof}

\begin{lemma}
Following restriction of $\{,\}^{\mathcal V}$
\begin{align*}
\{,\}^{inv}:\mathbb C_{\mathcal V}^{inv}\otimes\mathbb C_{\mathcal V}^{inv}\rightarrow\mathbb C_{\mathcal V}^{inv}
\end{align*}
is skew-symmetric, namely for all $f,g\in\mathbb C_{\mathcal V}^{inv}$ we have $\{f,g\}^{\mathcal V}\in\mathbb C_{\mathcal V}^{inv}$ and $\{f,g\}^{\mathcal V}=-\{g,f\}^{\mathcal V}$.
\label{lemm:BracketNSkewSymmetricRestriction}
\end{lemma}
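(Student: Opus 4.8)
The plan is to reduce both claims to the subalgebra of trace functions, where Procesi's theorem (Lemma~\ref{lemm:Processi}) and the weak skew-symmetry axiom~(\ref{eq:ModifiedDoublePoissonWeakSkewSymmetry}) do all the work, and then to extend to rational functions by the same localization step used in the two preceding lemmas. I would treat the invariance statement $\{f,g\}^{\mathcal V}\in\mathbb C_{\mathcal V}^{inv}$ first, via $GL_N(\mathbb C)$-equivariance of the bracket. Formula~(\ref{eq:NXNDoubleBracket}) is manifestly covariant under the conjugation action $\varphi(x)\mapsto g^{-1}\varphi(x)g$: the entries of the constant matrix $g$ are scalars, hence central in $\mathbb C[\mathcal V]$ and annihilated by $\{,\}^{\mathcal V}$, so expanding $\{(g^{-1}\varphi(x)g)_{ij},(g^{-1}\varphi(y)g)_{kl}\}^{\mathcal V}$ by bilinearity and~(\ref{eq:NXNDoubleBracket}) reproduces $(g^{-1}\varphi(\db{x,y}')g)_{kj}(g^{-1}\varphi(\db{x,y}'')g)_{il}$, i.e.\ the same formula evaluated on the conjugated representation. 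Thus $\{,\}^{\mathcal V}$ is $GL_N(\mathbb C)$-equivariant on $\mathbb C[\mathcal V]$; since its extensions in Lemma~\ref{lemm:NXNDoubleBracket} and in the following lemma are unique, it is $GL_N(\mathbb C)$-equivariant on all of $\mathbb C_{\mathcal V}$, and in particular it maps $\mathbb C_{\mathcal V}^{inv}\otimes\mathbb C_{\mathcal V}^{inv}$ into $\mathbb C_{\mathcal V}^{inv}$.

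Next I would establish skew-symmetry on trace generators. By Corollary~\ref{cor:DerivationHomomorphism}, taking the $(k,k)$ matrix entry and summing over $k$ gives $\{\varphi_0(x),\varphi_0(y)\}^{\mathcal V}=\sum_k\varphi(\{x,y\})_{kk}=\varphi_0(\{x,y\})$ for all $x,y\in\mathcal A$, where $\{x,y\}=\mu(\db{x,y})$ is the induced single bracket on $\mathcal A$. Consequently $\{\varphi_0(x),\varphi_0(y)\}^{\mathcal V}+\{\varphi_0(y),\varphi_0(x)\}^{\mathcal V}=\varphi_0(\{x,y\}+\{y,x\})$, which vanishes because $\{x,y\}+\{y,x\}\in[\mathcal A,\mathcal A]$ by~(\ref{eq:ModifiedDoublePoissonWeakSkewSymmetry}) and $\varphi_0$ factors through $\mathcal A/[\mathcal A,\mathcal A]$.

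To propagate this, I would use that $\mathbb C[\mathcal V]^{inv}$ is generated as a commutative algebra by the trace functions $\varphi_0(x)$ (Lemma~\ref{lemm:Processi}) and that $\{,\}^{\mathcal V}$ is a biderivation of the commutative ring $\mathbb C[\mathcal V]$ (identities~(\ref{eq:BracketOnCoordinateSpaceLeftLeibnitzIdentity})--(\ref{eq:BracketOnCoordinateSpaceRightLeibnitzIdentity})). Writing $f=P(\varphi_0(x_1),\dots,\varphi_0(x_m))$ and $g=Q(\varphi_0(y_1),\dots,\varphi_0(y_n))$, the chain rule for derivations gives $\{f,g\}^{\mathcal V}=\sum_{i,j}(\partial_iP)(\varphi_0(x_\bullet))\,(\partial_jQ)(\varphi_0(y_\bullet))\,\{\varphi_0(x_i),\varphi_0(y_j)\}^{\mathcal V}$, and adding the analogous expression with $f$ and $g$ interchanged, the previous paragraph kills the sum term by term, so $\{f,g\}^{\mathcal V}+\{g,f\}^{\mathcal V}=0$ throughout $\mathbb C[\mathcal V]^{inv}$. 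Finally, skew-symmetry passes to $\mathbb C_{\mathcal V}^{inv}$ exactly as in the proof of the preceding lemma: the identities $\{a^{-1},b\}^{\mathcal V}=-a^{-2}\{a,b\}^{\mathcal V}$ and $\{b,a^{-1}\}^{\mathcal V}=-a^{-2}\{b,a\}^{\mathcal V}$ carry skew-symmetry through every localization, and, $GL_N(\mathbb C)$ being linearly reductive, taking invariants commutes with localizing at an invariant function.

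I expect the last step to be the only real obstacle: one has to be careful in asserting that an invariant rational function is, locally, a ratio of invariant regular functions, so that the reduction to trace generators genuinely applies at the sheaf level. This is, however, precisely the reductivity input already used in Lemma~\ref{lemm:Processi} and in defining $\mathbb C_{\mathcal V}^{inv}$, and it is not where the mathematical content lies. That content is entirely in the skew-symmetry-on-traces computation, where the weak skew-symmetry axiom~(\ref{eq:ModifiedDoublePoissonWeakSkewSymmetry}) --- the one respect in which a modified double bracket is weaker than a double bracket --- is exactly what is required to recover an honest skew-symmetric Poisson bracket on the moduli space.
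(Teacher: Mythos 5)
Your proposal is correct and follows essentially the same route as the paper: reduce to trace generators $\varphi_0(x)$ using Lemma~\ref{lemm:Processi} and the biderivation property, compute $\{\varphi_0(x),\varphi_0(y)\}^{\mathcal V}=\varphi_0(\{x,y\})$ via Corollary~\ref{cor:DerivationHomomorphism}, and conclude skew-symmetry from the weak skew-symmetry axiom~(\ref{eq:ModifiedDoublePoissonWeakSkewSymmetry}) since $\varphi_0$ kills $[\mathcal A,\mathcal A]$. The only difference is that you supply an explicit $GL_N(\mathbb C)$-equivariance argument and a localization step for the invariance claim and the passage to rational functions, details the paper compresses into ``in light of the Leibnitz identities it is enough to check the statement on generators''; your added care there is sound and does not change the substance of the argument.
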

\begin{proof}
In light of Leibnitz identities (\ref{eq:BracketOnCoordinateSpaceLeftLeibnitzIdentity}) and (\ref{eq:BracketOnCoordinateSpaceRightLeibnitzIdentity}) it would be enough for us to show the statement for generators of $\mathbb C[\mathcal V]^{inv}$. So w.l.o.g we can assume that $f,g\in\mathcal H$ (see Lemma \ref{lemm:Processi}). Under this assumption there exist $x,y\in A$ s.t. $f=\varphi_0(x)$ and $g=\varphi_0(y)$. Denote $X:=\varphi(x),\; Y:=\varphi(y)$. Using (\ref{eq:NXNLodayBracket}) we get
\begin{align*}
\{f,g\}^{\mathcal V}=\{X_{ii},Y_{kk}\}^{\mathcal V}=\varphi(\{x,y\})_{kk} =\varphi_0(\{x,y\})\;\in\mathbb C[\mathcal V]^{inv}
\end{align*}
as a result
\begin{align*}
\{f,g\}^{\mathcal V}+\{g,f\}^{\mathcal V}=\varphi_0(\{x,y\}+\{y,x\})=0.
\end{align*}
\end{proof}

\begin{proposition}
The following restriction
\begin{align}
\{\_,\_\}^{\mathcal V}:\quad \mathbb C_{\mathcal V}^{inv}\otimes\mathbb C_{\mathcal V}\rightarrow\mathbb C_{\mathcal V}
\label{eq:LodayNXNDefinition}
\end{align}
satisfies the Jacobi identity for the left Loday bracket, for all $f,g\in\mathbb C_{\mathcal V}^{inv}$ and $h\in\mathbb C_{\mathcal V}:$
\begin{align*}
\{f,\{g,h\}^{\mathcal V}\}^{\mathcal V}-\{g,\{f,h\}^{\mathcal V}\}^{\mathcal V}=\{\{f,g\}^{\mathcal V},h\}^{\mathcal V}.
\end{align*}
\label{prop:H0BracketNXN}
\end{proposition}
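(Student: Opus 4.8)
The plan is to reduce the Jacobi identity for the restricted bracket \eqref{eq:LodayNXNDefinition} to the left Loday--Jacobi identity \eqref{eq:H0LeftLodayJacobi} of Corollary \ref{cor:H0FromModifiedDoublePoisson}, using the algebra homomorphism property encoded in Corollary \ref{cor:DerivationHomomorphism}. First I would observe that by Lemma \ref{lemm:Processi} and the Leibnitz identities \eqref{eq:BracketOnCoordinateSpaceLeftLeibnitzIdentity}--\eqref{eq:BracketOnCoordinateSpaceRightLeibnitzIdentity} it suffices to verify the identity when $f,g$ run over the generating set $\mathcal H$ of $\mathbb C[\mathcal V]^{inv}$; that is, we may assume $f=\varphi_0(x)$, $g=\varphi_0(y)$ for some $x,y\in A$, and since everything is a derivation in the last slot $h$, it is enough to take $h=\varphi(z)_{kl}$ a matrix coordinate for some $z\in A$. (A small point to address: one must check that reducing $f$ and $g$ to generators is legitimate, i.e. that the trilinear expression $J(f,g,h):=\{f,\{g,h\}^{\mathcal V}\}^{\mathcal V}-\{g,\{f,h\}^{\mathcal V}\}^{\mathcal V}-\{\{f,g\}^{\mathcal V},h\}^{\mathcal V}$ is a biderivation in $(f,g)$ for fixed $h$; this follows from \eqref{eq:BracketOnCoordinateSpaceLeftLeibnitzIdentity}, the Leibnitz rule for the outer bracket, and Lemma \ref{lemm:BracketNSkewSymmetricRestriction}, which guarantees $\{f,g\}^{\mathcal V}$ stays in $\mathbb C_{\mathcal V}^{inv}$ so that all three terms make sense.)

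Next, for $f=\varphi_0(x)$ with $x\in A$, Corollary \ref{cor:DerivationHomomorphism} says $\{\varphi_0(x),\varphi(w)\}^{\mathcal V}=\varphi(\{x,w\})$ for every $w\in A$, where on the right $\{x,w\}=\mu\circ\db{x\otimes w}\in A$. In other words, the operator $D_x:=\{\varphi_0(x),-\}^{\mathcal V}$ on $\mathbb C_{\mathcal V}$ restricts, on the image $\varphi(A)\subset Mat_N(\mathbb C[\mathcal V])$, to (the entrywise application of) the operator $\{x,-\}:A\to A$. I would then compute the three terms of $J(f,g,h)$ with $h=\varphi(z)_{kl}$: using Corollary \ref{cor:DerivationHomomorphism} twice,
\begin{align*}
\{f,\{g,h\}^{\mathcal V}\}^{\mathcal V}=\{\varphi_0(x),\varphi(\{y,z\})_{kl}\}^{\mathcal V}=\varphi\big(\{x,\{y,z\}\}\big)_{kl},
\end{align*}
and similarly the second term equals $\varphi(\{y,\{x,z\}\})_{kl}$; for the third term note $\{f,g\}^{\mathcal V}=\varphi_0(\{x,y\})$ by the computation in the proof of Lemma \ref{lemm:BracketNSkewSymmetricRestriction}, hence $\{\{f,g\}^{\mathcal V},h\}^{\mathcal V}=\varphi(\{\{x,y\},z\})_{kl}$. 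Subtracting, $J(f,g,h)=\varphi\big(\{x,\{y,z\}\}-\{y,\{x,z\}\}-\{\{x,y\},z\}\big)_{kl}$, which vanishes by the left Loday--Jacobi identity \eqref{eq:H0LeftLodayJacobi} for the $H_0$-bracket on $A$.

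Finally I would extend from the generators $h=\varphi(z)_{kl}$ to all of $\mathbb C[\mathcal V]$ by the derivation property in the last argument, and then from $\mathbb C[\mathcal V]$ to the sheaf $\mathbb C_{\mathcal V}$ by the localization argument already used in the proof that $\{,\}^{\mathcal V}$ extends to $\mathbb C_{\mathcal V}$ (the Jacobi-type identity is preserved under the unique extension to $S^{-1}R$). I expect the only genuinely delicate point to be bookkeeping: confirming that the outer bracket in each term of $J$ is legitimately being applied with a first argument in $\mathbb C_{\mathcal V}^{inv}$ (so that Corollary \ref{cor:DerivationHomomorphism} and Lemma \ref{lemm:BracketNSkewSymmetricRestriction} apply), and that the reduction to generators $f,g\in\mathcal H$ together with the derivation-in-$h$ reduction are carried out in a mutually consistent order; the algebraic heart of the matter is a one-line consequence of \eqref{eq:H0LeftLodayJacobi} once the homomorphism property of $\varphi_0$ under the bracket is in place.
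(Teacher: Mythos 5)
Your overall architecture coincides with the paper's: verify the identity for $f=\varphi_0(x)$, $g=\varphi_0(y)$, $h=\varphi(z)_{kl}$ using Corollary \ref{cor:DerivationHomomorphism} and the Loday--Jacobi identity \eqref{eq:H0LeftLodayJacobi} on $\mathcal A$, then propagate to general arguments by derivation properties and localization. The generator-level computation you give is correct and is exactly the paper's first step, and the reduction in the last argument $h$ is indeed formal (the cross terms cancel by commutativity of $\mathbb C_{\mathcal V}$).

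The gap is in the step you yourself flag as ``a small point to address'': the claim that $J(f,g,h)$ is a biderivation in $(f,g)$. This does \emph{not} follow from the Leibnitz identities together with the fact that $\{f,g\}^{\mathcal V}$ lands in $\mathbb C_{\mathcal V}^{inv}$. A direct computation --- which is in fact the bulk of the paper's proof --- gives
\begin{align*}
J(f_1f_2,g,h)-f_1\,J(f_2,g,h)-f_2\,J(f_1,g,h)
=&-\{f_2,h\}^{\mathcal V}\bigl(\{g,f_1\}^{\mathcal V}+\{f_1,g\}^{\mathcal V}\bigr)\\
&-\{f_1,h\}^{\mathcal V}\bigl(\{g,f_2\}^{\mathcal V}+\{f_2,g\}^{\mathcal V}\bigr),
\end{align*}
and since $\{,\}^{\mathcal V}$ is not skew-symmetric on all of $\mathbb C_{\mathcal V}$, these cross terms do not vanish for formal reasons: their cancellation requires precisely the skew-symmetry assertion of Lemma \ref{lemm:BracketNSkewSymmetricRestriction} applied to $f_1,f_2,g\in\mathbb C_{\mathcal V}^{inv}$. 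Your parenthetical attributes the role of that lemma solely to keeping $\{f,g\}^{\mathcal V}$ inside the invariant subalgebra ``so that all three terms make sense,'' which is the wrong mechanism; without invoking $\{f,g\}^{\mathcal V}+\{g,f\}^{\mathcal V}=0$ the reduction of $f$ and $g$ to the generating set $\mathcal H$ is unjustified and the argument does not close. The same cancellation underlies the extension to inverses, $J(f^{-1},g,h)=-f^{-2}J(f,g,h)$, and the symmetric treatment of the second argument, both of which are needed to pass from $\mathcal H$ to all of $\mathbb C_{\mathcal V}^{inv}$.
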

\begin{proof}
For $f,g\in\mathcal H$ and $h\in\mathbb C[\mathcal V]$ the statement is a straightforward consequence of Corollary \ref{cor:DerivationHomomorphism} and the fact that $\{,\}:\mathcal A_\natural\otimes\mathcal A\rightarrow\mathcal A$ is a Loday bracket. Denote
\begin{align*}
\phi(f,g,h):=\{f,\{g,h\}^{\mathcal V}\}^{\mathcal V}-\{g,\{f,h\}^{\mathcal V}\}^{\mathcal V}-\{\{f,g\}^{\mathcal V},h\}^{\mathcal V}.
\end{align*}
Since $\phi$ is a derivation in its' last argument, left Jacobi identity extends for $h\in\mathbb C_{\mathcal V}$. Next, we have
\begin{align*}
\phi(f_1f_2,g,h)-&f_1\phi(f_2,g,h)-f_2\phi(f_1,g,h)=\\
=& -\{g,f_1\}^{\mathcal V}\{f_2,h\}^{\mathcal V}- \{g,f_2\}^{\mathcal V}\{f_1,h\}^{\mathcal V}- \{f_2,g\}^{\mathcal V}\{f_1,h\}^{\mathcal V}- \{f_2,h\}^{\mathcal V}\{f_1,g\}^{\mathcal V}
\end{align*}
Note, that $\{,\}^{\mathcal V}$ is not skew-symmetric in general, however by Lemma \ref{lemm:BracketNSkewSymmetricRestriction} we have for $f_1,f_2,g\in\mathbb C_{\mathcal V}^{inv}$
\begin{align*}
\{g,f_1\}^{\mathcal V}+\{f_1,g\}^{\mathcal V}=0,\qquad \{f_2,g\}^{\mathcal V}+\{g,f_2\}^{\mathcal V}=0
\end{align*}
which is enough to conclude that for all $f_1,f_2,g\in\mathbb C_{\mathcal V}^{inv}$ and $h\in\mathbb C_{\mathcal V}$
\begin{align*}
\phi(f_1f_2,g,h)=f_1\phi(f_2,g,h)+f_2\phi(f_1,g,h).
\end{align*}
We also get for all $f,g\in\mathbb C_{\mathcal V}^{inv}$ and $g\in\mathbb C_{\mathcal V}$ s.t. $f^{-1}\in\mathbb C_{\mathcal V}^{inv}$
\begin{align*}
\phi(f^{-1},g,h)=-f^{-2}\phi(f,g,h).
\end{align*}
Similar reasoning applies for the second argument which finalizes the proof.
\end{proof}

\begin{remark}
Proposition \ref{prop:H0BracketNXN} defines a representation analogue of an $H_0$-Poisson structure. Note the dual properties, once $H_0$-Poisson structure factors through $\{,\}:\;\mathcal A/[\mathcal A,\mathcal A]\otimes\mathcal A\rightarrow\mathcal A$, the induced bracket defined above has to be restricted on invariant subalgebra $\{,\}^{\mathcal V}:\;\mathbb C_{\mathcal V}^{inv}\otimes\mathbb C_{\mathcal V}\rightarrow\mathbb C_{\mathcal V}$ in order to satisfy Jacobi identity. Following ideas of \cite{Turaev'2014} we formulate this duality fundamentally in Section \ref{sec:BracketsOnRepresentationAlgebras} when we show that one can generalize Proposition \ref{prop:H0BracketNXN} beyond matrix representations.
\end{remark}

\begin{corollary} The following restriction
\begin{align}
\{\_,\_\}^{inv}:\quad\mathbb C_{\mathcal V}^{inv}\otimes\mathbb C_{\mathcal V}^{inv}\rightarrow\mathbb C_{\mathcal V}^{inv}
\label{eq:InvariantPoissonBracket}
\end{align}
is a Poisson bracket.
\label{prop:InvariantPoisson}
\end{corollary}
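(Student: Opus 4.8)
The plan is to check the defining axioms of a Poisson bracket on $\mathbb{C}_{\mathcal V}^{inv}$ — well-definedness (closure), bilinearity, skew-symmetry, the Leibniz rule, and the Jacobi identity — by assembling the results already established in this section. Closure of $\mathbb{C}_{\mathcal V}^{inv}$ under $\{\_,\_\}^{\mathcal V}$ and skew-symmetry of the restriction are exactly the content of Lemma \ref{lemm:BracketNSkewSymmetricRestriction}; bilinearity is immediate since $\{\_,\_\}^{\mathcal V}$ is a linear map by Lemma \ref{lemm:NXNDoubleBracket}. Because $\mathbb{C}_{\mathcal V}$ is a commutative ring, the defining relations (\ref{eq:BracketOnCoordinateSpaceLeftLeibnitzIdentity})--(\ref{eq:BracketOnCoordinateSpaceRightLeibnitzIdentity}) are just the ordinary Leibniz rules for a biderivation, and they restrict to the subalgebra $\mathbb{C}_{\mathcal V}^{inv}$ (indeed, given skew-symmetry, the rule in one slot already implies the rule in the other). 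So $\{\_,\_\}^{inv}$ is a skew-symmetric biderivation, and only the Jacobi identity remains.

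For the Jacobi identity I would invoke Proposition \ref{prop:H0BracketNXN} in the special case where the third argument is also invariant: for all $f,g,h\in\mathbb{C}_{\mathcal V}^{inv}\subset\mathbb{C}_{\mathcal V}$,
\begin{align*}
\{f,\{g,h\}^{\mathcal V}\}^{\mathcal V}-\{g,\{f,h\}^{\mathcal V}\}^{\mathcal V}=\{\{f,g\}^{\mathcal V},h\}^{\mathcal V}.
\end{align*}
Now every bracket appearing here has both entries in $\mathbb{C}_{\mathcal V}^{inv}$ (using closure from Lemma \ref{lemm:BracketNSkewSymmetricRestriction} once more), so skew-symmetry applies to all of them; substituting $\{f,h\}^{\mathcal V}=-\{h,f\}^{\mathcal V}$ and $\{\{f,g\}^{\mathcal V},h\}^{\mathcal V}=-\{h,\{f,g\}^{\mathcal V}\}^{\mathcal V}$ turns the left Loday identity into the cyclic Jacobi identity
\begin{align*}
\{f,\{g,h\}^{\mathcal V}\}^{\mathcal V}+\{g,\{h,f\}^{\mathcal V}\}^{\mathcal V}+\{h,\{f,g\}^{\mathcal V}\}^{\mathcal V}=0.
\end{align*}
This is the standard fact that a left Leibniz bracket which is moreover skew-symmetric is a genuine Lie bracket.

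I do not expect a real obstacle here: the corollary is essentially a repackaging of Lemma \ref{lemm:BracketNSkewSymmetricRestriction} and Proposition \ref{prop:H0BracketNXN}. The only point deserving a word of care is the bookkeeping in passing from the coordinate ring $\mathbb{C}[\mathcal V]^{inv}$ to the sheaf $\mathbb{C}_{\mathcal V}^{inv}$ — one must know that extending along multiplicative sets of invariant non-zero-divisors preserves skew-symmetry, the Leibniz rule and the Jacobi identity — but this is already built into the statements of the preceding lemmas, the relevant localization argument being the one used in the proof of well-definedness of $\{\_,\_\}^{\mathcal V}$ on $\mathbb{C}_{\mathcal V}$. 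As a consistency check one may also note that, by Corollary \ref{cor:DerivationHomomorphism}, on the generating set $\mathcal H$ the bracket $\{\_,\_\}^{inv}$ is the pushforward under $\varphi_0$ of the $H_0$-bracket on $\mathcal A_\natural$, so the Poisson structure produced is precisely the one predicted by Crawley-Boevey's theorem.
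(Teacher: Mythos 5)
Your proof is correct and follows essentially the same route as the paper: skew-symmetry and closure from Lemma \ref{lemm:BracketNSkewSymmetricRestriction}, the Leibniz rule from the defining relations, and the Jacobi identity as the special case of Proposition \ref{prop:H0BracketNXN} with all three arguments invariant (the paper even notes the same alternative via Corollary \ref{cor:H0FromModifiedDoublePoisson} and Crawley-Boevey's theorem that you mention as a consistency check). Your explicit conversion of the left Loday identity into the cyclic Jacobi identity using skew-symmetry is a useful elaboration of a step the paper leaves implicit.
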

\begin{proof}
This statement follows from Corollary \ref{cor:H0FromModifiedDoublePoisson} and results of \cite{Crawley-Boevey'2011}, however below we will present a direct proof using Proposition \ref{prop:H0BracketNXN}.

Indeed, by Lemma \ref{lemm:BracketNSkewSymmetricRestriction} this restriction is skew-symmetric, it satisfies Leibnitz identity in both arguments by definition (\ref{eq:BracketOnCoordinateSpaceLeftLeibnitzIdentity}) -- (\ref{eq:BracketOnCoordinateSpaceRightLeibnitzIdentity}). As a particular case of Proposition \ref{prop:H0BracketNXN} it also satisfies Jacobi identity.
\end{proof}

\subsection{Casimir elements}

Action of the Poisson bracket (\ref{eq:InvariantPoissonBracket}) on the full representation scheme (\ref{eq:LodayNXNDefinition}) provides a convenient way to construct Casimir elements. Recall

\begin{definition}
Element $c\in\mathcal A$ is a right Casimir of bracket $\{,\}$ if for all $h\in\mathcal A/[\mathcal A,\mathcal A]$ we have $\{h,c\}=0$.
\end{definition}

\begin{remark}
It is worth noting that right Casimir elements are not necessary within the left kernel of the bracket beyond skew-symmetric case. For a particular example see (\ref{eq:DoubleBracketForKontsevichSystem}) and (\ref{eq:TypeARightCasimir}).
\end{remark}

Since $\{,\}$ is a derivation in the second argument, the set of all right Casimir elements forms a subalgebra $\mathcal C\subset\mathcal A$. This subalgebra allows one to construct Casimir elements of the bracket on representation scheme.

\begin{proposition}
Assume that $c\in A$ is a right Casimir of bracket $\{,\}:A/[A,A]\otimes A\rightarrow A$. Subalgebra $\mathbb C[\varphi(c)_{kl}]$ generated by components of the matrix $\varphi(c)$ consist of right Casimirs of the bracket $\{,\}^{\mathcal V}$
\label{prop:CasimirsNXN}
\end{proposition}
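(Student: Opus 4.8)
The plan is to reduce the statement to the generating case via the Leibniz identities and then use the homomorphism property from Corollary~\ref{cor:DerivationHomomorphism}. First I would observe that, by Lemma~\ref{lemm:Processi} and Lemma~\ref{lemm:NXNDoubleBracket}, a right Casimir statement for $\{,\}^{\mathcal V}$ over all of $\mathbb C[\mathcal V]$ follows once it is verified against a generating set of $\mathbb C[\mathcal V]^{inv}$ in the left slot and against the matrix entries $\varphi(x)_{ij}$, $x\in A$, in the right slot; indeed if $h$ ranges over $\mathcal H=\varphi_0(\mathcal A/[\mathcal A,\mathcal A])$ and the conclusion $\{h,\varphi(c)_{kl}\}^{\mathcal V}=0$ holds, then the left Leibniz identity~(\ref{eq:BracketOnCoordinateSpaceLeftLeibnitzIdentity}) propagates the vanishing to all products of such $h$, hence to all of $\mathbb C[\mathcal V]^{inv}$, and then the right Leibniz identity~(\ref{eq:BracketOnCoordinateSpaceRightLeibnitzIdentity}) propagates it from the $\varphi(c)_{kl}$ to the whole subalgebra $\mathbb C[\varphi(c)_{kl}]$ they generate. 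So the crux is the single computation $\{\varphi_0(x)\,,\,\varphi(c)_{kl}\}^{\mathcal V}=0$ for $x\in A$.

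For that computation I would invoke Corollary~\ref{cor:DerivationHomomorphism}: $\{\varphi_0(x),\varphi(c)\}^{\mathcal V}=\varphi(\{x,c\})$ as a matrix equality, where on the right $\{x,c\}$ is the $H_0$-Poisson bracket $\{,\}:\mathcal A/[\mathcal A,\mathcal A]\otimes\mathcal A\to\mathcal A$. Since $c$ is a right Casimir, $\{x,c\}=\{\bar x,c\}=0$ in $\mathcal A$ for every $\bar x\in\mathcal A/[\mathcal A,\mathcal A]$, and applying $\varphi$ gives the zero matrix. Reading this off in components yields $\{\varphi(x)_{ii},\varphi(c)_{kl}\}^{\mathcal V}=0$, i.e. $\{\varphi_0(x),\varphi(c)_{kl}\}^{\mathcal V}=0$, which is exactly the base case needed above.

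It remains to push the vanishing through localization so the statement holds for $\mathbb C_{\mathcal V}$-valued brackets as well, if desired; the identities $\{a^{-1},b\}^{\mathcal V}=-a^{-2}\{a,b\}^{\mathcal V}$ and $\{b,a^{-1}\}^{\mathcal V}=-a^{-2}\{b,a\}^{\mathcal V}$ used in the earlier localization lemmas immediately carry a right Casimir to all localizations, since $\{h,\varphi(c)_{kl}\}^{\mathcal V}=0$ forces $\{h,g\}^{\mathcal V}=0$ for every $g$ in the generated (localized) subalgebra by the Leibniz rule. The main obstacle, and the only place one must be slightly careful, is the bookkeeping in the first paragraph: because $\{,\}^{\mathcal V}$ is \emph{not} skew-symmetric here, one cannot freely swap the two arguments, so the reduction must be organized as "fix a left Casimir-type argument, vary the right argument by the right Leibniz rule" and separately "fix the right argument in $\varphi(c)$, build up the left argument inside $\mathbb C[\mathcal V]^{inv}$ by the left Leibniz rule" — mixing these up would implicitly assume symmetry. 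Everything else is a direct application of Corollary~\ref{cor:DerivationHomomorphism} together with the definition of a right Casimir.
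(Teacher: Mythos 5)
Your argument is correct and follows the same route as the paper: the base case $\{\varphi_0(x),\varphi(c)_{kl}\}^{\mathcal V}=\varphi(\{x,c\})_{kl}=0$ via Corollary~\ref{cor:DerivationHomomorphism} and the right Casimir property, then propagation to all of $\mathbb C_{\mathcal V}^{inv}\otimes\mathbb C[\varphi(c)_{kl}]$ by the Leibniz identities in each argument. The extra care you take about not swapping arguments in the absence of skew-symmetry is a sensible (if implicit in the paper) point, but the proof is essentially identical.
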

\begin{proof}
For all $h\in\mathcal H$ we have
\begin{align*}
\{\varphi_0(h),\varphi(c)_{kl}\}^{\mathcal V}=\{\varphi(h)_{ii},\varphi(c)_{kl}\}^{\mathcal V}=\varphi(\{h,c\})_{kl}=0.
\end{align*}
Since $\{,\}^{\mathcal V}$ satisfies Leibnitz identity w.r.t to both arguments we conclude that for all $f\in\mathbb C_{\mathcal V}^{inv}$ and $x\in\mathbb C[\varphi(c)_{kl}]$
\begin{align*}
\{f,x\}^{\mathcal V}=0.
\end{align*}
\end{proof}
Note, that in Proposition \ref{prop:CasimirsNXN} it is essential to consider a restricted bracket $\{\_,\_\}^{\mathcal V}$ on $\mathbb C_{\mathcal V}^{inv}\otimes\mathbb C_{\mathcal V}$ as defined in (\ref{prop:CasimirsNXN}). If instead of element $\mathbb C_{\mathcal V}^{inv}$ as a first argument we take arbitrary $f\in\mathbb C_{\mathcal V}$ the bracket with a Casimir elements do not have to be zero.
\begin{corollary}
Let $\mathcal C\subset\mathcal A$ be a subalgebra of right Casimir elements of bracket $\{,\}$, then $\varphi_0(\mathcal C)\subset\mathbb C_{\mathcal V}^{inv}$ consist of Casimir elements of bracket $\{,\}^{inv}$.
\end{corollary}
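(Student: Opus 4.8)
The plan is to read off the statement from Proposition~\ref{prop:CasimirsNXN} applied to a single Casimir element, together with the skew-symmetry recorded in Lemma~\ref{lemm:BracketNSkewSymmetricRestriction}. Fix $c\in\mathcal C$, regarded also as an element of $\mathcal A/[\mathcal A,\mathcal A]$, so that by hypothesis $\{h,c\}=0$ for every $h\in\mathcal A/[\mathcal A,\mathcal A]$. First I would observe that $\varphi_0(c)=\sum_i\varphi(c)_{ii}$ is a linear combination of the matrix entries $\varphi(c)_{kl}$, hence lies in the subalgebra $\mathbb C[\varphi(c)_{kl}]$, and that it is manifestly $GL_N(\mathbb C)$-invariant, so $\varphi_0(c)\in\mathbb C[\mathcal V]^{inv}\subset\mathbb C_{\mathcal V}^{inv}$. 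Proposition~\ref{prop:CasimirsNXN} then yields $\{f,\varphi_0(c)\}^{\mathcal V}=0$ for all $f\in\mathbb C_{\mathcal V}^{inv}$; in particular $\varphi_0(c)$ is a right Casimir of the restriction $\{\_,\_\}^{inv}$ on $\mathbb C_{\mathcal V}^{inv}\otimes\mathbb C_{\mathcal V}^{inv}$.

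It then remains to upgrade this to a genuine two-sided Casimir, which is exactly where the invariance of $\varphi_0(c)$ enters a second time. By Lemma~\ref{lemm:BracketNSkewSymmetricRestriction} the bracket $\{\_,\_\}^{inv}\colon\mathbb C_{\mathcal V}^{inv}\otimes\mathbb C_{\mathcal V}^{inv}\to\mathbb C_{\mathcal V}^{inv}$ is skew-symmetric, so for every $f\in\mathbb C_{\mathcal V}^{inv}$ one also gets $\{\varphi_0(c),f\}^{inv}=-\{f,\varphi_0(c)\}^{inv}=0$. Thus $\varphi_0(c)$ lies in both the left and the right kernel of the Poisson bracket $\{\_,\_\}^{inv}$ (which is indeed a Poisson bracket by Corollary~\ref{prop:InvariantPoisson}), i.e.\ it is a Casimir; since $c\in\mathcal C$ was arbitrary, $\varphi_0(\mathcal C)$ consists of Casimir elements of $\{\_,\_\}^{inv}$.

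I do not expect a real obstacle: the corollary is essentially a repackaging of Proposition~\ref{prop:CasimirsNXN} for the particular invariant function $\varphi_0(c)$. The only point deserving a word of care is the distinction, emphasized in the remark preceding Proposition~\ref{prop:CasimirsNXN}, between being a right Casimir of the (in general non-skew-symmetric) bracket and belonging to its left kernel — the passage between the two being legitimate here precisely because the restriction to $\mathbb C_{\mathcal V}^{inv}\otimes\mathbb C_{\mathcal V}^{inv}$ is skew-symmetric. One could alternatively bypass Proposition~\ref{prop:CasimirsNXN} entirely and argue on the generators $f=\varphi_0(x)$, $x\in\mathcal A$, of $\mathbb C[\mathcal V]^{inv}$: by Corollary~\ref{cor:DerivationHomomorphism} (taking traces) one has $\{\varphi_0(x),\varphi_0(c)\}^{\mathcal V}=\varphi_0(\{x,c\})=0$, and then one extends to all of $\mathbb C_{\mathcal V}^{inv}$ by the Leibnitz rules (\ref{eq:BracketOnCoordinateSpaceLeftLeibnitzIdentity})--(\ref{eq:BracketOnCoordinateSpaceRightLeibnitzIdentity}) and localization exactly as in the proof of Proposition~\ref{prop:CasimirsNXN}, followed by the same skew-symmetry step; this makes the argument self-contained and independent of the results of \cite{Crawley-Boevey'2011}.
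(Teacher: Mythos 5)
Your proposal is correct and follows exactly the route the paper intends: the corollary is the immediate specialization of Proposition~\ref{prop:CasimirsNXN} to the invariant elements $\varphi_0(c)\in\mathbb C[\varphi(c)_{kl}]\cap\mathbb C_{\mathcal V}^{inv}$, upgraded from a right Casimir to a two-sided one via the skew-symmetry of Lemma~\ref{lemm:BracketNSkewSymmetricRestriction}. Your extra care about the left/right kernel distinction, and the alternative self-contained argument via Corollary~\ref{cor:DerivationHomomorphism}, are both sound but do not change the substance of the argument.
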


This Corollary is especially useful when $\mathcal C$ is finitely generated. We illustrate this method in Section \ref{sec:DimensionsOfSymplecticLeavesForKontsevichBracket}.

\section{Examples of the Modified Double Poisson brackets}
\label{sec:ExamplesOfModifiedDoublePoissonBrackets}

\subsection{Bracket for Kontsevich system}
\label{sec:BracketForKontsevichSystem}

Here we describe a particular example of modified double bracket on $\mathbb C\langle u^{\pm},v^{\pm}\rangle$ introduced in \cite{Arthamonov'2015}. This double bracket is not skew-symmetric and thus provides an example beyond the case considered in \cite{VandenBergh'2008}.

Let $\mathcal A^+=\mathbb C\langle u,v\rangle$ be a free associative algebra with two generators. Define a biderivation of $\mathcal A$ on the generators as
\begin{align}
\db{u,v}_K=-vu\otimes1,\qquad
\db{v,u}_K=uv\otimes 1,\qquad \db{u,u}_K=\db{v,v}_K=0.
\label{eq:DoubleBracketForKontsevichSystem}
\end{align}

\begin{proposition}{\cite{Arthamonov'2015}}
Biderivation $\db{\_}_K$ is a modified double Poisson bracket.
\end{proposition}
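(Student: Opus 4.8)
The plan is to verify the four defining axioms of Definition \ref{def:ModifiedDoubleBracket} for the biderivation $\db{\_}_K$ given on generators by \eqref{eq:DoubleBracketForKontsevichSystem}. Since $\mathcal{A}^+=\mathbb{C}\langle u,v\rangle$ is free, axioms \eqref{eq:ModifiedDoublePoissonLeibnitz1}--\eqref{eq:ModifiedDoublePoissonLeibnitz2} are not constraints but rather the \emph{definition} of how $\db{\_}_K$ extends from generators to all of $\mathcal{A}^+$: one declares $\db{\_}_K$ on pairs of generators and propagates by the outer-bimodule Leibniz rules, exactly as in Van den Bergh's setup. So the first step is to observe that a well-defined map $\mathcal{A}^+\otimes\mathcal{A}^+\to\mathcal{A}^+\otimes\mathcal{A}^+$ satisfying \eqref{eq:ModifiedDoublePoissonLeibnitz1}--\eqref{eq:ModifiedDoublePoissonLeibnitz2} exists and is unique; freeness means there is no consistency obstruction with relations. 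The weak skew-symmetry axiom \eqref{eq:ModifiedDoublePoissonWeakSkewSymmetry} should be checked next, and it is the easiest: it suffices (by the Leibniz property of $\{\_\}=\mu\circ\db{\_}_K$ in each slot, modulo commutators) to check $\{a,b\}+\{b,a\}\equiv 0 \bmod [\mathcal{A}^+,\mathcal{A}^+]$ on generators, where from \eqref{eq:DoubleBracketForKontsevichSystem} we get $\{u,v\}_K=-vu$, $\{v,u\}_K=uv$, whose sum $uv-vu\in[\mathcal{A}^+,\mathcal{A}^+]$, and the diagonal cases vanish; a short induction on word length then upgrades this to all of $\mathcal{A}^+$.

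The real content is the modified Jacobi identity \eqref{eq:ModifiedDoublePoissonJacobi}. Here I would \emph{not} attempt a general double-bracket associator computation; instead I would use the derivation property of the map $\Phi(a,b,c):=\{a\otimes\{b\otimes c\}\}-\{b\otimes\{a\otimes c\}\}-\{\{a\otimes b\}\otimes c\}$. First, record how $\Phi$ behaves under multiplication in each of its three arguments: each of the three terms is built from $\{\_\}$, which is a derivation in the second slot and cyclic-invariant in a controlled way (Corollary \ref{cor:H0FromModifiedDoublePoisson}), so $\Phi$ itself is a derivation in $c$, and one gets compatible (possibly inhomogeneous) Leibniz-type rules in $a$ and $b$ with correction terms that cancel once \eqref{eq:ModifiedDoublePoissonWeakSkewSymmetry} is invoked — this is precisely the mechanism used in the proof of Proposition \ref{prop:H0BracketNXN}. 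Consequently, it is enough to verify \eqref{eq:ModifiedDoublePoissonJacobi} when $a,b,c\in\{u,v\}$. That leaves a handful of cases: the identity is trivially satisfied when two of $a,b,c$ coincide with the generator appearing in a way that kills the bracket (e.g. $a=b$), and the genuinely nontrivial checks reduce to $(a,b,c)$ a permutation of $(u,v,\ast)$ with $\ast\in\{u,v\}$. For each such triple one computes both sides directly from \eqref{eq:DoubleBracketForKontsevichSystem}: e.g. for $(u,v,u)$ one needs $\{v\otimes u\}_K=uv$, then $\{u\otimes uv\}_K$ via \eqref{eq:ModifiedDoublePoissonLeibnitz1}, similarly $\{v\otimes\{u\otimes u\}_K\}_K=0$, and $\{\{u\otimes v\}_K\otimes u\}_K=\{-vu\otimes u\}_K$ via \eqref{eq:ModifiedDoublePoissonLeibnitz2}, and one confirms the three expressions combine to zero. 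The bracket is simple enough — all values are of the form (word)$\otimes 1$ — that these computations are short.

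The main obstacle I anticipate is purely bookkeeping: in \eqref{eq:ModifiedDoublePoissonJacobi} the inner bracket $\{b\otimes c\}$ lands in $\mathcal{A}^+$ (not $\mathcal{A}^+\otimes\mathcal{A}^+$), so the outer $\db{a\otimes -\,}_K$ must be applied to a \emph{word}, forcing use of the Leibniz rule \eqref{eq:ModifiedDoublePoissonLeibnitz1} to expand $\db{u\otimes uv}_K$, $\db{u\otimes vu}_K$, etc.; keeping the two tensor legs and the left/right bimodule actions straight is where errors creep in, and it is worth carrying out the generator-level cases with explicit attention to which factor the monomials $vu$, $uv$ attach to. A secondary point to be careful about: the reduction "check axioms on generators" for \eqref{eq:ModifiedDoublePoissonJacobi} genuinely needs \eqref{eq:ModifiedDoublePoissonWeakSkewSymmetry} to be already established, so the order of the argument matters — verify \eqref{eq:ModifiedDoublePoissonWeakSkewSymmetry} first, then deduce the derivation-in-each-slot behavior of $\Phi$, then reduce \eqref{eq:ModifiedDoublePoissonJacobi} to generators, then finish by the finite case-check. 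Once all four axioms hold on generators and the extension lemmas are invoked, $\db{\_}_K$ is a modified double Poisson bracket on $\mathcal{A}^+=\mathbb{C}\langle u,v\rangle$; the passage to $\mathbb{C}\langle u^{\pm},v^{\pm}\rangle$ is then the standard localization argument (inverting $u,v$ and using $\db{u^{-1},\_}_K=-(u^{-1}\otimes u^{-1})\db{u,\_}_K$ and its right analogue), which is formal.
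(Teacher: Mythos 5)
The paper does not actually prove this proposition; it is imported from \cite{Arthamonov'2015}, where the verification is carried out directly. Your proposal, by contrast, rests on a step that fails: the claimed reduction of the Jacobi identity \eqref{eq:ModifiedDoublePoissonJacobi} (and of the weak skew-symmetry \eqref{eq:ModifiedDoublePoissonWeakSkewSymmetry}) to a finite check on the generators $u,v$. The cancellation mechanism you borrow from Proposition \ref{prop:H0BracketNXN} does not transfer to the noncommutative algebra $\mathcal A^+$. There, the correction terms in the expansion of $\phi(f_1f_2,g,h)$ have the form $(\{g,f_1\}^{\mathcal V}+\{f_1,g\}^{\mathcal V})\{f_2,h\}^{\mathcal V}$ and vanish because on the \emph{commutative} invariant subalgebra the bracket is genuinely skew-symmetric (Lemma \ref{lemm:BracketNSkewSymmetricRestriction}), i.e.\ $\{g,f_1\}^{\mathcal V}+\{f_1,g\}^{\mathcal V}=0$ exactly. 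In $\mathcal A^+$ you only have $\{a,b\}+\{b,a\}\in[\mathcal A^+,\mathcal A^+]$, and since $[\mathcal A^+,\mathcal A^+]$ is a subspace rather than an ideal, a product such as $(\{a,b\}+\{b,a\})\{a',c\}$ need not vanish, nor even lie in $[\mathcal A^+,\mathcal A^+]$. Moreover $\{\_\}=\mu\circ\db{\_}$ is not a derivation in its first argument at all — it satisfies only the cyclic identity \eqref{eq:HOCyclicInvariance} — so your $\Phi$ admits no usable Leibniz rule in $a$ or $b$. The same obstruction breaks the ``short induction on word length'' for \eqref{eq:ModifiedDoublePoissonWeakSkewSymmetry}: expanding $\{a,bc\}+\{bc,a\}$ produces terms like $\db{c,a}'b\db{c,a}''$, which modulo commutators equals $b\,\db{c,a}''\db{c,a}'$ rather than $b\,\{c,a\}$, and one cannot commute factors \emph{inside} a product while staying in the coset modulo $[\mathcal A^+,\mathcal A^+]$.

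The paper itself flags exactly this point: the concluding section shows that beyond the skew-symmetric case the operators $D_1,D_2$ in \eqref{eq:JacobiDecomposition} fail to be triple derivations (see \eqref{eq:YangBaxtexDefect}, which is stated to fail precisely for \eqref{eq:DoubleBracketForKontsevichSystem}), and it poses as an open problem the formulation of a condition replacing \eqref{eq:ModifiedDoublePoissonJacobi} and \eqref{eq:ModifiedDoublePoissonWeakSkewSymmetry} that can be tested on generators only; correspondingly, the examples \eqref{eq:ExampleFree3I}--\eqref{eq:ExampleFree3II} are left as conjectures after testing on monomials up to length $5$. Your generator-level computations (e.g.\ the triple $(u,v,u)$) are correct as far as they go, and the preliminary points — unique extension of the biderivation on a free algebra and the localization to $u^{\pm},v^{\pm}$ — are fine, but they do not constitute a proof. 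A complete argument must verify \eqref{eq:ModifiedDoublePoissonJacobi} and \eqref{eq:ModifiedDoublePoissonWeakSkewSymmetry} on arbitrary monomials, for instance by deriving closed formulas for $\db{a,b}_K$ on words in $u^{\pm},v^{\pm}$, as is done in \cite{Arthamonov'2015}.
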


Under the representation functor $\mathrm{Rep_N}$ our algebra is mapped to the commutative algebra $\mathcal A_N=\mathbb C(u_{i,j},v_{i,j})$ of rational functions in $2N^2$ variables $\{u_{i,j},v_{i,j}\,|\,1\leqslant i,j\leqslant N\}$. The corresponding affine scheme $\mathcal V$ is just a $2N^2$-dimensional vector space over $\mathbb C$.

The induced bracket is a biderivation $\{,\}^{\mathcal V}:\mathcal A_N\otimes\mathcal A_N\rightarrow\mathcal A_N$ defined on generators as
\begin{equation}
\begin{aligned}
\{u_{ij},v_{kl}\}^{\mathcal V}=&-\delta_{il}\sum_mv_{km}u_{mj}\\
\{v_{kl},u_{ij}\}^{\mathcal V}=&\delta_{kj}\sum_mu_{im}v_{ml}.
\end{aligned}
\label{eq:KontevichBracketNxN}
\end{equation}

Proposition \ref{prop:InvariantPoisson} implies that restriction of $\{,\}^{\mathcal V}$ on invariant rational functions $\{\_,\_\}^{inv}:\,\mathcal A_N^{inv}\otimes\mathcal A_N^{inv}\rightarrow\mathcal A_N^{inv}$ is a Poisson bracket.

\subsubsection{Dimensions of the symplectic leaves}
\label{sec:DimensionsOfSymplecticLeavesForKontsevichBracket}

Element
\begin{align}
c=uvu^{-1}v^{-1}
\label{eq:TypeARightCasimir}
\end{align}
is a right Casimir of the $H_0$-Poisson bracket induced by (\ref{eq:DoubleBracketForKontsevichSystem}) (see \cite{Arthamonov'2015} for a proof). One can show that $\textrm{Tr}\,\varphi(c^k)$ provide Casimirs for the indeuced bracket $\{\_,\_\}^{inv}$. The Poisson bracket on $\mathbb C^{inv}_{\mathcal V}$ we defined earlier is degenerate due to existence of Casimirs. Which means that the Poisson tensor is not invertible at a generic point. In order to make it invertible (and thus induce a symplectic structure) one has to restrict the bracket to the subvariety corresponding to the fixed level of all Casimir functions (See e.g. \cite{Arnold}). The codimension of such variety is, of course, simply the number of algebraically independent Casimir functions.

Based on direct computation of dimensions of symplectic leaves for bracket $\{\_,\_\}^{inv}$ we come to the following
\begin{conjecture}
There are exactly $N-1$ algebraically independent Casimir elements given by $\textrm{Tr}\,\varphi(c^k)$ for the bracket $\{,\}^{inv}$.
\end{conjecture}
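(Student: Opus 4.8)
The plan is to split the conjecture into two parts. \emph{Part (a):} the functions $\textrm{Tr}\,\varphi(c^k)$, $k\geqslant 1$, are Casimirs of $\{,\}^{inv}$, and among them exactly $N-1$ are algebraically independent. \emph{Part (b):} the generic corank of the Poisson bivector $\{,\}^{inv}$ on $\mathbb C_{\mathcal V}^{inv}$ equals $N-1$; since $\dim\mathbb C_{\mathcal V}^{inv}=2N^2-(N^2-1)=N^2+1$ for a generic (irreducible) pair $u,v$, this says the generic symplectic leaf has the even dimension $N^2-N+2$. Because the transcendence degree over $\mathbb C$ of the field of Casimirs equals this generic corank, (a) and (b) together yield the conjecture: (a) shows the corank is at least $N-1$ and is already realized by the $\textrm{Tr}\,\varphi(c^k)$, while (b) shows there is nothing more.

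Part (a) is the routine half. That $c=uvu^{-1}v^{-1}$ is a right Casimir of the $H_0$-bracket induced by \eqref{eq:DoubleBracketForKontsevichSystem} is proved in \cite{Arthamonov'2015}; right Casimirs form a subalgebra, so each $c^k$ is one, and by the Corollary following Proposition \ref{prop:CasimirsNXN} the traces $\varphi_0(c^k)=\textrm{Tr}\,\varphi(c^k)$ are Casimirs of $\{,\}^{inv}$. For the count, note $\det\varphi(c)=1$, so the characteristic polynomial of $\varphi(c)$ has constant term $(-1)^N$, i.e.\ $e_N(\varphi(c))=1$; by Newton's identities (characteristic $0$) the power sums $p_k=\textrm{Tr}\,\varphi(c^k)$ with $k\geqslant N$ are polynomials in $p_1,\dots,p_{N-1}$ once $e_N$ is fixed, so at most $N-1$ of the $p_k$ are algebraically independent. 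Equality holds once the morphism $[\rho]\mapsto\bigl(e_1(\varphi(c)),\dots,e_{N-1}(\varphi(c))\bigr)$ is dominant onto $\mathbb C^{N-1}$; this follows from Shoda's theorem that every element of $SL_N(\mathbb C)$ is a commutator $uvu^{-1}v^{-1}$ with $u,v\in GL_N(\mathbb C)$, since then $\varphi(c)$ may be taken to be an arbitrary regular semisimple element of $SL_N$ and its eigenvalue tuple ranges over a dense subset of $\{\prod\lambda_i=1\}$.

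For Part (b) I envisage two routes. \emph{Route A:} identify $\{,\}^{inv}$ (up to sign and scalar) with the Atiyah--Bott--Goldman Poisson bracket on the $GL_N$-character variety of the once-punctured torus, whose fundamental group is the free group $\langle u,v\rangle$ with boundary word $c$. The symmetric part of $\db{\_}_K$ dies on invariants by Lemma \ref{lemm:BracketNSkewSymmetricRestriction}, so it would suffice to match the resulting skew bracket with Goldman's on a generating family of trace functions of simple closed curves, and then quote the classical description of the symplectic leaves of the Goldman bracket as the level sets of the boundary-holonomy conjugacy class --- here exactly the fibres of $[\rho]\mapsto(\text{conjugacy class of }\varphi(c)\text{ in }SL_N)$, a base of dimension $N-1$ whose coordinate ring is generated by the $\textrm{Tr}\,\varphi(c^k)$. \emph{Route B (self-contained):} the Poisson rank is lower semicontinuous, so it suffices to exhibit one point of $\mathbb C_{\mathcal V}^{inv}$ where it equals $N^2-N+2$; one takes an irreducible pair, e.g.\ $\varphi(u)=\textrm{diag}(a_1,\dots,a_N)$ with distinct $a_i$ and $\varphi(v)$ the cyclic shift, chooses $N^2+1$ invariant trace functions forming a local coordinate system near $[\rho]$, and computes the matrix of their mutual $\{,\}^{inv}$-brackets from \eqref{eq:KontevichBracketNxN}; the vanishing $\{u,u\}^{\mathcal V}=\{v,v\}^{\mathcal V}=0$ makes this matrix block off-diagonal and built solely from $\varphi(u),\varphi(v)$, and one checks its rank is $N^2-N+2$.

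The main obstacle is Part (b). In Route A it is the identification itself: the bracket \eqref{eq:DoubleBracketForKontsevichSystem} is ``local'' (only $\db{u,v}$ and $\db{v,u}$ are nonzero) while Goldman's bracket is described via intersections of loops in general position, so pinning down the normalization --- consistent with the $N=1$ value $\{u,v\}^{\mathcal V}=-uv$ --- and ruling out that $\{,\}^{inv}$ differs from the Goldman bracket by a Poisson automorphism with the same Casimirs both require care. In Route B the obstacle is the uniform-in-$N$ linear algebra: the computations behind the conjecture have so far been done only for small $N$, and the task is to find a basis adapted to the Jordan form of $\varphi(c)$ in which the corank of the relevant matrix is visibly $N-1$, with the expectation that this corank is spanned by the differentials $d\,\textrm{Tr}\,\varphi(c^k)$ together with one ``central'' direction that must be tracked separately; carrying out $N=2,3$ by hand should expose the pattern.
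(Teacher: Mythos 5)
Be aware that the paper contains no proof of this statement: it is stated as a conjecture and supported only by the computational evidence of Table \ref{tab:QuadraticBracketSymplecticLeafDimensions} for $N\leqslant 6$. So there is nothing to compare your argument against; it must stand on its own. Your Part (a) does stand on its own and already goes beyond what the paper establishes: the Casimir property of $\textrm{Tr}\,\varphi(c^k)$ follows from Proposition \ref{prop:CasimirsNXN} and its corollary exactly as you say, the bound ``at most $N-1$'' via $\det\varphi(c)=1$ and Newton's identities is correct, and the lower bound via surjectivity of the commutator map $GL_N\times GL_N\rightarrow SL_N$ is sound (your reduction of the conjecture to ``generic corank $=N-1$'' is also logically correct, including the use of lower semicontinuity of the Poisson rank).

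The genuine gap is Part (b), which is the entire content of the conjecture beyond the paper's Casimir construction, and neither of your routes is carried out. Route A hinges on an identification of $\{,\}^{inv}$ with the Goldman bracket on the $GL_N$-character variety of the once-punctured torus that you do not establish; this is not a formality, since $\db{\_}_K$ is defined on the representation scheme of the free \emph{associative} algebra (all of $Mat_N\times Mat_N$, not pairs of invertible matrices), only its skew-symmetrization survives on invariants by Lemma \ref{lemm:BracketNSkewSymmetricRestriction}, and matching two Poisson brackets on a generating set of trace functions requires verifying the brackets of traces of \emph{all} words, not just of $u$ and $v$. Route B reduces to a rank computation at a single well-chosen point, which is a legitimate strategy but is precisely the uniform-in-$N$ computation you admit you have not done; moreover your remark that the vanishing of $\db{u,u}_K$ and $\db{v,v}_K$ makes the relevant matrix ``block off-diagonal'' is misleading, because local coordinates on $\mathbb C_{\mathcal V}^{inv}$ are traces of mixed words in $u$ and $v$, whose mutual brackets computed from \eqref{eq:KontevichBracketNxN} have no such block structure. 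As written, the proposal is a credible research program with one half completed, not a proof; to close it you must either execute the Goldman identification (and then cite a leaf description valid for the complex reductive group $GL_N(\mathbb C)$ on the smooth locus, together with generic irreducibility of the fibres of the boundary map so that the rational Casimir field is exactly the pullback of the base) or exhibit the rank-$(N^2-N+2)$ point for all $N$.
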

We summarize a computational evidence in favour of this conjecture in the Table \ref{tab:QuadraticBracketSymplecticLeafDimensions}. Here $\textrm{dim}\,L$ --- dimension of a generic symplectic leaf, $\textrm{codim}\,\varphi_0(c^k)$ --- number of algebraically independent Casimirs provided by $\varphi_0(c^k)$.
\begin{table}[h!]
\begin{tabular}{c|ccc}
$N$&$\textrm{dim}\,\mathbb C_{\mathcal V}^{inv}$&$\textrm{dim}\,L$&$\textrm{codim}\,\varphi_0(c^k)$\\
\hline
1&2&2&0\\
2&5&4&1\\
3&10&8&2\\
4&17&14&3\\
5&26&22&4\\
6&37&32&5
\end{tabular}
\caption{Summary on tests of dimensions of symplectic leaf}
\label{tab:QuadraticBracketSymplecticLeafDimensions}
\end{table}

\subsection{Other examples Double Poisson Brackets}

Below we present a couple of other examples of modified double Poisson brackets on $Free_3=\mathbb C\langle x_1,x_2,x_3\rangle$. Unlike (\ref{eq:DoubleBracketForKontsevichSystem}), examples presented in this subsection are conjectural although very well tested. More examples and partial classification are in progress.
\begin{equation}
\begin{aligned}
\db{x_1,x_2}^{I}&=-x_2x_1\otimes1,&\qquad \db{x_2,x_1}^{I}&=x_1x_2\otimes1,\\
\db{x_2,x_3}^{I}&=-x_2\otimes x_3,&\qquad \db{x_3,x_2}^{I}&=x_2\otimes x_3,\\
\db{x_3,x_1}^{I}&=-1\otimes x_3x_1,&\qquad \db{x_1,x_3}^{I}&=1\otimes x_1x_3.
\end{aligned}
\label{eq:ExampleFree3I}
\end{equation}
Here all omitted brackets of generators are assumed to be zero.
\begin{equation}
\begin{aligned}
\db{x_1,x_2}^{II}&=-x_1\otimes x_2,&\qquad \db{x_2,x_1}^{II}&=x_1\otimes x_2,\\
\db{x_2,x_3}^{II}&=x_3\otimes x_2,&\qquad \db{x_3,x_2}^{II}&=-x_3\otimes x_2,\\
\db{x_3,x_1}^{II}&=x_1\otimes x_3-x_3\otimes x_1.
\end{aligned}
\label{eq:ExampleFree3II}
\end{equation}
\begin{conjecture}
Brackets (\ref{eq:ExampleFree3I}) and (\ref{eq:ExampleFree3II}) are modified double Poisson brackets on $Free_3$. Namely, corresponding biderivations satisfy (\ref{eq:ModifiedDoublePoissonJacobi}) and (\ref{eq:ModifiedDoublePoissonWeakSkewSymmetry}).
\end{conjecture}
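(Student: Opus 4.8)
The plan is to verify the two defining conditions (\ref{eq:ModifiedDoublePoissonJacobi}) and (\ref{eq:ModifiedDoublePoissonWeakSkewSymmetry}) for each of the biderivations $\db{\_}^{I}$ and $\db{\_}^{II}$ directly. Since both brackets are declared only on generators of $Free_3$ and are extended by the Leibniz rules (\ref{eq:ModifiedDoublePoissonLeibnitz1}) and (\ref{eq:ModifiedDoublePoissonLeibnitz2}), the first step is to record that a biderivation is uniquely determined by its values on generators, so that both axioms need only be checked on triples of generators $x_p,x_q,x_r$. This is the standard reduction used for Van den Bergh double brackets, and it applies verbatim here because the Leibniz axioms are the same; I would state it as a preliminary lemma (or invoke the analogue of \cite[Prop.~2.3.1]{VandenBergh'2008}).

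Next I would check the weak skew-symmetry (\ref{eq:ModifiedDoublePoissonWeakSkewSymmetry}). For this one composes with multiplication, $\{a,b\}=\mu\circ\db{a,b}$, and verifies $\{x_p,x_q\}+\{x_q,x_p\}\in[Free_3,Free_3]$ on the nine generator pairs; for both examples the nonzero contributions come in canceling pairs (e.g.\ for $\db{\_}^{I}$, $\{x_1,x_2\}+\{x_2,x_1\}=-x_2x_1+x_1x_2=[x_1,x_2]$, and similarly for the other blocks), so condition (\ref{eq:ModifiedDoublePoissonWeakSkewSymmetry}) holds on generators; one then extends to all of $Free_3$ using that $\{\_,\_\}$ is a derivation in each slot and that $[Free_3,Free_3]$ is an ideal modulo which the bracket is cyclic, exactly as in Corollary \ref{cor:H0FromModifiedDoublePoisson}. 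This step is essentially bookkeeping.

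The real work is the Jacobi-type identity (\ref{eq:ModifiedDoublePoissonJacobi}). The cleanest route is to introduce the triple bracket $\db{a,b,c}$ (the associated "obstruction" $3$-tensor in $\mathcal A^{\otimes 3}$, as in \cite{VandenBergh'2008}) whose vanishing is equivalent to (\ref{eq:ModifiedDoublePoissonJacobi}) after applying $\mu^{(2)}$, and to observe that $\db{a,b,c}$ is a derivation in each of its three arguments (for the appropriate outer bimodule structures). Granting that, one only has to evaluate the single-bracket-of-single-bracket expression $\{x_p\otimes\{x_q\otimes x_r\}\}-\{x_q\otimes\{x_p\otimes x_r\}\}-\{\{x_p\otimes x_q\}\otimes x_r\}$ on all ordered triples of generators and check it is zero; because each example has only three "active" edges among the three generators, most triples are immediately zero and only a handful require genuine computation. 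The main obstacle is precisely this finite but delicate case check: the brackets $\db{x_i,x_j}^{I}$ have outputs of three different shapes ($\ast\otimes 1$, $\ast\otimes\ast$, $1\otimes\ast$), so when one feeds, say, $\{x_3\otimes\{x_1\otimes x_2\}\}$ one must carefully apply the inner Leibniz rule (\ref{eq:ModifiedDoublePoissonLeibnitz1}) to $\db{x_3\otimes x_2x_1}$ and track which tensor factor multiplications land where; getting the $op$-free bookkeeping right (this is exactly where non-skew-symmetry makes the computation differ from \cite{VandenBergh'2008}) is the crux. I would organize the verification example by example, listing for $\db{\_}^{I}$ and then $\db{\_}^{II}$ the value of $\db{x_q,x_r}$, then of $\db{x_p, (\db{x_q,x_r})'}\cdot(1\otimes (\db{x_q,x_r})'')$ etc., and confirm the three-term alternating sum collapses; since the paper flags these as "conjectural although very well tested," the proof is really the promise that this mechanical-but-involved check goes through, ideally streamlined by the derivation property of the triple bracket so that no monomials of length $>2$ ever need to be handled.
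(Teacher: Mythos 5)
The statement you set out to prove is stated in the paper only as a conjecture, supported by a computational check of (\ref{eq:ModifiedDoublePoissonJacobi}) and (\ref{eq:ModifiedDoublePoissonWeakSkewSymmetry}) on all monomials up to length $5$; no proof is given there, and your proposal does not close the gap. The step on which your whole argument rests --- that both axioms ``need only be checked on triples of generators'' because the associated triple bracket is a derivation in each slot, as in Van den Bergh's setting --- is precisely the step that is unavailable here. That derivation property is a consequence of the skew-symmetry $\db{a,b}=-\db{b,a}^{op}$ together with the double Jacobi identity, neither of which is assumed (or holds) for $\db{\_}^{I}$ and $\db{\_}^{II}$. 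The paper makes this failure explicit in its concluding section: the defect (\ref{eq:YangBaxtexDefect}) of $D_1$ being a derivation in its first argument does not vanish beyond the skew-symmetric case (it already fails for the bracket (\ref{eq:DoubleBracketForKontsevichSystem})); only the full combination $\mu(D_1+D_2)$ in (\ref{eq:JacobiDecomposition}) is required to vanish, and that combination is \emph{not} a triple derivation. Consequently, verifying (\ref{eq:ModifiedDoublePoissonJacobi}) on generators does not imply it on all of $Free_3$, and the paper explicitly lists the formulation of a generator-only criterion replacing (\ref{eq:ModifiedDoublePoissonJacobi}) and (\ref{eq:ModifiedDoublePoissonWeakSkewSymmetry}) as an open problem.

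A similar objection applies to your treatment of the weak skew-symmetry (\ref{eq:ModifiedDoublePoissonWeakSkewSymmetry}), which you dismiss as bookkeeping. The single bracket $\{a,b\}=\mu\db{a,b}$ is a derivation only in its second argument; in the first it is merely cyclic, $\{ab,c\}=\db{b,c}'a\db{b,c}''+\db{a,c}'b\db{a,c}''$, and the discrepancy between $\db{b,c}'a\db{b,c}''$ and $a\{b,c\}$ modulo cyclic rearrangement is $a$ times a commutator, which need not lie in the linear span $[\mathcal A,\mathcal A]$ (a span, not an ideal). So knowing $\{x_p,x_q\}+\{x_q,x_p\}\in[Free_3,Free_3]$ on the nine generator pairs does not automatically propagate to arbitrary arguments. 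In short, your proposal correctly identifies the computations one would like to perform, but the reduction that would make them finite is exactly the missing idea; absent it, the best available course is what the author actually did --- test the identities degree by degree --- which is why the statement remains a conjecture rather than a proposition.
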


We have tested equations (\ref{eq:ModifiedDoublePoissonJacobi}) and (\ref{eq:ModifiedDoublePoissonWeakSkewSymmetry}) for all monomials up to length 5.

\section{Modified double Poisson bi-vectors}
\label{sec:GeneralPoissonPolyvectors}

Following \cite{Crawley-Boevey'1999,Crawley-BoeveyEtingofGinzburg'2007,VandenBergh'2008} we introduce the notions of noncommutative vector fields and algebra of poly-vector fields. We slightly modify homomorphism from noncommutative poly-vectors to derivations introduced in \cite{VandenBergh'2008} to include poly-derivations with no cyclic invariance.
\begin{definition}
For a finitely generated associative algebra $\mathcal A$, let $\delta:\mathcal A\rightarrow\mathcal A\otimes\mathcal A$ be a linear map satisfying the following form of Leibnitz identity
\begin{align*}
\delta(ab)=(a\otimes1)\,\delta(b)+\delta(a)\,(1\otimes b).
\end{align*}
Then we call $\delta$ a \textbf{noncommutative vector field}. The space of all noncommutative vector fields for a given algebra $\mathcal A$ we denote by $\mathcal D_{\mathcal A}$ in what follows.
\end{definition}

In other words $\delta\in\mathcal D_{\mathcal A}$ is a derivation of $\mathcal A$ with a codomain $\mathcal A\otimes\mathcal A$ treated as an outer $\mathcal A$-bimodule. The remaining structure of inner bimodule makes $\delta$ itself an $\mathcal A$-bimodule, where the left-right action is defined s.t. for $\delta\in\mathcal D_{\mathcal A}$ and for all $a_1,a_2,x\in{\mathcal A}:$
\begin{align}
a_1\,\delta(x)\,a_2:=(1\otimes a_1)\,\delta(x)\,(a_2\otimes 1)=\delta(x)'a_2\otimes a_2\delta(x)''.
\label{eq:DAInnerBimoduleAction}
\end{align}
Let $\mathcal D\mathcal A=T_{\mathcal A}\mathcal D_{\mathcal A}$ be the tensor algebra over $\mathcal A$ generated by $\mathcal A$-bimodule (\ref{eq:DAInnerBimoduleAction}). As suggested in \cite{VandenBergh'2008} we call elements of $\mathcal D\mathcal A$ noncommutative poly-vector fields. One can equip $\mathcal D\mathcal A$ with a grading by assigning $\deg a=0$ for each $a\in \mathcal A$ and $\deg\delta=1$ for each $\delta\in\mathcal D_{\mathcal A}$. We denote multiplication in $\mathcal{DA}$ by $\star$ and $k^{th}$ homogenous component by $(\mathcal{DA})_k$. In Sweedler notations multiplication of noncommutative poly-vector fields becomes explicit
\begin{align}
&D=\delta_1\star\delta_2\star\dots\star\delta_k\in (\mathcal D\mathcal A)_k,\quad D:\mathcal A^{\otimes k}\rightarrow\mathcal A^{\otimes(k+1)},\nonumber\\
& D(a_1\otimes\dots\otimes a_k)=\delta_k(a_k)' \otimes\delta_{k-1}(a_{k-1})'\delta_k(a_k)''
\otimes\dots \otimes\delta_1(a_1)'\delta_2(a_2)'' \otimes\delta_1(a_1)''
\label{eq:PolyvectorFieldAction}
\end{align}

Unpaired components of vector fields in (\ref{eq:PolyvectorFieldAction}) provide a structure on an $\mathcal A$-bimodule, for each $b\in\mathcal A$ and $D\in(\mathcal D\mathcal A)_k$
\begin{align*}
(D\star b)(a_1\otimes a_k)=\delta_k(a_k)'b\otimes\dots\otimes\delta_1(a_1)'',\\
(b\star D)(a_1\otimes a_k)=\delta_k(a_k)'\otimes\dots\otimes b\delta_1(a_1)''.
\end{align*}
For each $\mathcal A$-bimodule one can define a notion of partial trace, so we introduce
\begin{definition} Let $P\in\mathcal{DA}$, we call ``partial trace over $\mathcal A$'' and denote as $\textrm{tr}_{\mathcal A}P$ the following equivalence class
\begin{align*}
\textrm{tr}_{\mathcal A}P=P+[\mathcal{DA}, \mathcal A],\quad\textrm{where}\quad [\mathcal{DA},\mathcal A]=\textrm{Span}\left\{Q\star a-a\star Q\;|\;a\in\mathcal A,\;Q\in\mathcal{DA}\right\},
\end{align*}
or equivalently
\begin{align*}
\textrm{tr}_{\mathcal A}:\;\mathcal{DA}\rightarrow\mathcal{DA}/[\mathcal{DA},\mathcal A].
\end{align*}
\end{definition}
\begin{definition}
Let $\mathcal D_{\mathcal A^{\otimes k}}$ be the following space of $k$-derivations $\delta:\mathcal A^{\otimes k}\rightarrow\mathcal A^{\otimes k}$ s.t. for all $a_1,\dots,a_k\in\mathcal A$ and for all $i\in\{1,\dots,k\}$
\begin{equation}
\begin{aligned}
\delta(a_1\otimes\dots\otimes \underset{\displaystyle\underset i\uparrow}{\displaystyle bc}\otimes\dots\otimes a_k)=&(\underset{k-i}{\undergroup{1\otimes\dots\otimes1}}\otimes b\otimes\underset{i}{\undergroup{1\otimes\dots\otimes 1}})\,\delta(a_1\otimes\dots\otimes\underset{\displaystyle\underset i\uparrow} c\otimes\dots\otimes a_k)+\\
&+\delta(a_1\otimes\dots\otimes\underset{\displaystyle\underset i\uparrow} b\otimes\dots\otimes a_k)\,(\underset{(1-i) \bmod k}{\undergroup{1\otimes\dots\otimes1}}\otimes c\,\otimes\underset{((i-2)\bmod k)+1}{\undergroup{1\otimes\;\;\dots\;\;\otimes1}}).
\end{aligned}
\label{eq:BasicPolyderivation}
\end{equation}
\end{definition}

\begin{proposition}
Partial trace provides a linear homomorphism $\left(\mathcal{DA}/[\mathcal{DA},\mathcal A]\right)_k\xrightarrow{\textrm{tr}_{\mathcal A}}\mathcal D_{\mathcal A^{\otimes k}}$ given by
\begin{align}
\left(\textrm{tr}_{\mathcal A}(\delta_1\star\dots\star\delta_k)\right)(a_1\otimes\dots\otimes a_k)=\delta_k(a_k)'\delta_1(a_1)''\otimes\delta_{k-1}(a_{k-1})' \delta_k(a_k)''\otimes\dots\otimes\delta_1(a_1)'\delta_2(a_2)''.
\label{eq:PartialTraceExplicit}
\end{align}
\label{prop:polyderivations}
\end{proposition}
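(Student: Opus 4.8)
The plan is to realize the map of the proposition by an explicit formula on $(\mathcal{DA})_k$, to show it annihilates $[\mathcal{DA},\mathcal A]$ so that it descends to the quotient, and then to verify that its values are honest $k$-derivations in the sense of (\ref{eq:BasicPolyderivation}); $\mathbb C$-linearity is visible throughout. The starting observation is that the right-hand side of (\ref{eq:PartialTraceExplicit}) is just the polyvector action (\ref{eq:PolyvectorFieldAction}) of $D=\delta_1\star\dots\star\delta_k$, viewed as a map $\mathcal A^{\otimes k}\to\mathcal A^{\otimes(k+1)}$, followed by the ``closing'' map $\mathcal A^{\otimes(k+1)}\to\mathcal A^{\otimes k}$, $b_0\otimes b_1\otimes\dots\otimes b_k\mapsto b_0b_k\otimes b_1\otimes\dots\otimes b_{k-1}$, which multiplies together the two unpaired legs $\delta_k(a_k)'$ and $\delta_1(a_1)''$ of (\ref{eq:PolyvectorFieldAction}). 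Since (\ref{eq:PolyvectorFieldAction}) already attaches a well-defined operator to an element of $(\mathcal{DA})_k$ --- equivalently, since replacing $\delta_i a\star\delta_{i+1}$ by $\delta_i\star a\delta_{i+1}$ leaves it unchanged, the displaced $a$ being absorbed into $\delta_i(a_i)'\delta_{i+1}(a_{i+1})''$ via (\ref{eq:DAInnerBimoduleAction}) --- post-composing with the linear closing map gives a well-defined $\mathbb C$-linear map $(\mathcal{DA})_k\to\mathrm{Hom}_{\mathbb C}(\mathcal A^{\otimes k},\mathcal A^{\otimes k})$ whose value on $\delta_1\star\dots\star\delta_k$ is (\ref{eq:PartialTraceExplicit}).

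Next I would check that this map kills $[\mathcal{DA},\mathcal A]$. By linearity it suffices to evaluate (\ref{eq:PartialTraceExplicit}) on $Q\star a-a\star Q$ with $Q=\delta_1\star\dots\star\delta_k$ and $a\in\mathcal A$. From the bimodule formulas recorded after (\ref{eq:PolyvectorFieldAction}), $Q\star a$ differs from $Q$ only by appending $a$ to the right of the leg $\delta_k(a_k)'$, while $a\star Q$ differs only by prepending $a$ to the left of the leg $\delta_1(a_1)''$; but these are precisely the two legs that the closing map multiplies, in that order, so both yield the operator whose first tensor slot is $\delta_k(a_k)'\,a\,\delta_1(a_1)''$ with all other slots unchanged. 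Hence $Q\star a$ and $a\star Q$ have the same image, the map descends to $(\mathcal{DA}/[\mathcal{DA},\mathcal A])_k$, and by construction the descended map is $\textrm{tr}_{\mathcal A}$.

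Finally I would verify (\ref{eq:BasicPolyderivation}) for $T:=\textrm{tr}_{\mathcal A}(\delta_1\star\dots\star\delta_k)$. Reading off (\ref{eq:PartialTraceExplicit}), in $T(a_1\otimes\dots\otimes a_k)$ the Sweedler leg $\delta_i(a_i)'$ sits at the left end of the tensor slot numbered $k-i+1$ modulo $k$, and $\delta_i(a_i)''$ at the right end of the cyclically adjacent slot numbered $k-i+2$ modulo $k$. Substituting $a_i=bc$ and expanding via the vector-field Leibnitz rule $\delta_i(bc)=(b\otimes1)\delta_i(c)+\delta_i(b)(1\otimes c)$ splits $T(\dots\otimes bc\otimes\dots)$ into two summands: in the first the spillover factor $b$ is produced at the left of slot $k-i+1$, giving the term $(1^{\otimes(k-i)}\otimes b\otimes1^{\otimes i})\,T(\dots\otimes c\otimes\dots)$ of (\ref{eq:BasicPolyderivation}) (the product read cyclically on $\mathcal A^{\otimes k}$); in the second the spillover factor $c$ is produced at the right of slot $k-i+2$, which coincides with position $((1-i)\bmod k)+1$, giving the remaining term of (\ref{eq:BasicPolyderivation}) in which $T(\dots\otimes b\otimes\dots)$ is multiplied on the right by the element with $c$ in that position and $((i-2)\bmod k)+1$ trailing ones. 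Collecting the two summands reproduces (\ref{eq:BasicPolyderivation}), and since $T$ is manifestly $\mathbb C$-linear in each input we conclude $T\in\mathcal D_{\mathcal A^{\otimes k}}$.

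The one genuinely delicate point is the modular slot bookkeeping in this last step, particularly the boundary indices $i=1$ and $i=k$: for these the spillover factor does not land beside slot $i$ but in the slot formed when the closing map fused the two unpaired legs, and one must check that the exponents $(1-i)\bmod k$ and $((i-2)\bmod k)+1$ in (\ref{eq:BasicPolyderivation}) encode exactly this wrap-around --- an off-by-one slip here is the easiest mistake to make. Everything else is routine and relies only on the balancedness of (\ref{eq:PolyvectorFieldAction}) over $\mathcal A$, which is already built into the construction of $\star$.
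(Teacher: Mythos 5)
Your proof is correct and takes essentially the same route as the paper's: the decisive step in both is substituting the Leibnitz rule $\delta_i(bc)=(b\otimes 1)\,\delta_i(c)+\delta_i(b)\,(1\otimes c)$ into (\ref{eq:PartialTraceExplicit}) and matching the two resulting summands with the two terms of (\ref{eq:BasicPolyderivation}). You additionally spell out the descent through $[\mathcal{DA},\mathcal A]$ --- the observation that the closing map fuses exactly the two unpaired legs $\delta_k(a_k)'$ and $\delta_1(a_1)''$, so $Q\star a$ and $a\star Q$ have the same image --- which the paper's one-sentence proof leaves implicit; this is a completion of the same argument rather than a different one.
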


\begin{proof}
First, recall that $\delta_i(bc)=b\delta_i(c)'\otimes\delta_i(c)'' +\delta_i(b)'\otimes\delta_i(b)''c,$ then substituting it in to the l.h.s. of (\ref{eq:BasicPolyderivation}) we get the desired result.
\end{proof}

\begin{corollary}
Let $\delta_1,\delta_2\in D_A$ be noncommutative vector fields, then $R=\textrm{tr}_{\mathcal A}(\delta_1\star\delta_2)$ is a biderivation:
\begin{align*}
R\,(ab\otimes c)=(1\otimes a) \left(R\,(b\otimes c)\right)+\left(R\,(a\otimes c)\right)(b\otimes 1),\\
R\,(a\otimes bc)=(b\otimes 1)\left(R\,(a\otimes c)\right)+\left(R\,(a\otimes b)\right)(1\otimes c),
\end{align*}
where
\begin{align*}
R(a\otimes b):=\textrm{tr}_{\mathcal A}(\delta_1\star\delta_2)(a\otimes b)=\delta_2(b)'\delta_1''(a)\otimes\delta_1(a)'\delta_2(b)''.
\end{align*}
\label{lemm:BiderivationFromBivector}
\end{corollary}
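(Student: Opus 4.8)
The plan is to derive Corollary \ref{lemm:BiderivationFromBivector} directly from the explicit formula \eqref{eq:PartialTraceExplicit} in Proposition \ref{prop:polyderivations} by specializing to $k=2$, so that the two required Leibniz identities for $R$ become instances of the $k$-derivation relation \eqref{eq:BasicPolyderivation} with $k=2$ and $i\in\{1,2\}$. First I would record the specialized formula: writing $a_1=a$, $a_2=b$ in \eqref{eq:PartialTraceExplicit} with $k=2$ gives
\[
R(a\otimes b)=\mathrm{tr}_{\mathcal A}(\delta_1\star\delta_2)(a\otimes b)=\delta_2(b)'\,\delta_1(a)''\otimes\delta_1(a)'\,\delta_2(b)'',
\]
which is exactly the displayed expression for $R$ in the statement. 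Since by Proposition \ref{prop:polyderivations} we already know $\mathrm{tr}_{\mathcal A}(\delta_1\star\delta_2)\in\mathcal D_{\mathcal A^{\otimes 2}}$, the corollary is just the unpacking of what membership in $\mathcal D_{\mathcal A^{\otimes 2}}$ means for $k=2$; so in principle one could simply cite the proposition. However, to keep the note self-contained and to make the asymmetry between the two slots transparent, I would give the short direct check.

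For the second argument, I would take $i=2$ in \eqref{eq:BasicPolyderivation} (the slot occupied by $b$ in $R(a\otimes bc)$). The prefactor $(\undergroup{1\otimes\cdots\otimes1}_{k-i}\otimes b\otimes\undergroup{1\otimes\cdots\otimes1}_{i})$ becomes $b\otimes 1$ acting from the left on $R(a\otimes c)$, and the postfactor $(\undergroup{1\otimes\cdots\otimes1}_{(1-i)\bmod k}\otimes c\otimes\undergroup{1\otimes\cdots\otimes1}_{((i-2)\bmod k)+1})$ with $k=2$, $i=2$ reduces to $1\otimes c$ acting from the right on $R(a\otimes b)$, yielding
\[
R(a\otimes bc)=(b\otimes 1)\bigl(R(a\otimes c)\bigr)+\bigl(R(a\otimes b)\bigr)(1\otimes c),
\]
as claimed. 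Concretely this is obtained by substituting $\delta_2(bc)=b\,\delta_2(c)'\otimes\delta_2(c)''+\delta_2(b)'\otimes\delta_2(b)''c$ into the formula for $R(a\otimes bc)$ and regrouping the outer tensor factors: the first summand produces $(b\otimes1)\bigl(\delta_2(c)'\delta_1(a)''\otimes\delta_1(a)'\delta_2(c)''\bigr)$ and the second produces $\bigl(\delta_2(b)'\delta_1(a)''\otimes\delta_1(a)'\delta_2(b)''\bigr)(1\otimes c)$, since the bimodule actions \eqref{eq:DAInnerBimoduleAction} only touch the unpaired components $\delta_2(\cdot)'$ and $\delta_2(\cdot)''$.

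For the first argument, I would instead take $i=1$ (the slot of $b$ in $R(ab\otimes c)$); now the prefactor becomes $1\otimes b$ on the left and the postfactor, with $k=2$, $i=1$, becomes $c\otimes 1$ on the right, so that
\[
R(ab\otimes c)=(1\otimes a)\bigl(R(b\otimes c)\bigr)+\bigl(R(a\otimes c)\bigr)(b\otimes 1).
\]
The computation mirrors the previous one, using $\delta_1(ab)=a\,\delta_1(b)'\otimes\delta_1(b)''+\delta_1(a)'\otimes\delta_1(a)''b$ and regrouping, with the roles of the left and right outer slots exchanged relative to the $i=2$ case — this exchange is precisely why the two Leibniz identities for $R$ are not symmetric and why a non-skew-symmetric biderivation results. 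The only point requiring a little care, hence the main (mild) obstacle, is getting the placement of the tensor factors and the $1$'s in the pre- and postfactors of \eqref{eq:BasicPolyderivation} right for $k=2$ and tracking which Sweedler component absorbs $b$ versus $c$; once the indices are pinned down the verification is a one-line regrouping in each case.
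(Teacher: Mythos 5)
Your proposal is correct and follows essentially the same route as the paper: the corollary is the $k=2$ specialization of Proposition \ref{prop:polyderivations}, whose one-line proof is exactly the substitution of $\delta_i(bc)=b\,\delta_i(c)'\otimes\delta_i(c)''+\delta_i(b)'\otimes\delta_i(b)''\,c$ into the explicit partial-trace formula and regrouping, which you carry out explicitly for both slots. The direct computations for $i=1$ and $i=2$ are accurate and match the two stated Leibniz identities.
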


\begin{definition}
Let $P=\sum_i\delta_1^i\star\delta_2^i\in(\mathcal{DA})_2$ be a noncommutative bi-vector, we call $P$ a \textbf{Modified double Poisson bi-vector} if the induced bi-derivation $\textrm{tr}_{\mathcal A}\left(\sum_i\delta_1^i\star\delta_2^i\right):A\otimes A\rightarrow A\otimes A$ is a modified double Poisson bracket.
\end{definition}

Proposition \ref{prop:polyderivations} should be compared to Proposition 4.1.1 in \cite{VandenBergh'2008}. It is worth noting that we have used partial trace $\textrm{tr}_{\mathcal A}:\;\mathcal{DA}\rightarrow\mathcal{DA}/[\mathcal{DA},\mathcal A]$ to construct bi-derivations by the corresponding poly-vector fields as opposed to the abelianization $\mathcal{DA}\rightarrow\mathcal{DA}/[\mathcal{DA},\mathcal{DA}]$ used in \cite{VandenBergh'2008}. This allows one to take into consideration poly-derivations with no cyclic ``anti-equivariance''.

As we have shown in Sec. \ref{sec:ExamplesOfModifiedDoublePoissonBrackets} there are examples of the modified double Poisson brackets given by a biderivation which is substantially non-skew-symmetric. Below we present non-skew-symmetric double Poisson bivector which induces bracket (\ref{eq:DoubleBracketForKontsevichSystem}).

\subsection*{Poisson bivector for bracket (\ref{eq:DoubleBracketForKontsevichSystem})}
\label{sec:PoissonBivectorForKontsevichBracket}

Let $\mathcal A^+=\mathbb C\langle u,v\rangle$ be a free associative algebra with two generators. Define noncommutative vector fields $\delta_1,\delta_2,\widetilde{\delta_1},\widetilde{\delta_2}\in D_{\mathcal A}$ by their action on generators
\begin{align*}
\begin{array}{lll}
\delta_1(u)=1\otimes u,&&\delta_1(v)=1\otimes v,\\
\delta_2(u)=u\otimes1,&&\delta_2(v)=0,\\
\widetilde{\delta_1}(u)=u\otimes 1,&&\widetilde{\delta_1}(v)=v\otimes 1,\\
\widetilde{\delta_2}(u)=1\otimes u,&&\widetilde{\delta_2}(v)=0.
\end{array}
\end{align*}
\begin{proposition}
\begin{align*}
\db{\_}_K:=\textrm{tr}_{\mathcal A^+}\left(\delta_1\star\delta_2- \widetilde{\delta_2}\star\widetilde{\delta_1}\right)
\end{align*}
\end{proposition}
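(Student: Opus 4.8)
The plan is to verify directly that the biderivation $R:=\textrm{tr}_{\mathcal A^+}\left(\delta_1\star\delta_2-\widetilde{\delta_2}\star\widetilde{\delta_1}\right)$ reproduces formula (\ref{eq:DoubleBracketForKontsevichSystem}) on the four pairs of generators $(u,u),(u,v),(v,u),(v,v)$; since both $R$ and $\db{\_}_K$ are biderivations (the former by Corollary \ref{lemm:BiderivationFromBivector}, the latter by construction) and $\mathcal A^+$ is freely generated by $u,v$, agreement on generators forces agreement everywhere, as the biderivation property (\ref{eq:ModifiedDoublePoissonLeibnitz1})--(\ref{eq:ModifiedDoublePoissonLeibnitz2}) extends the map uniquely from generators to all monomials. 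First I would write out the explicit formula from Corollary \ref{lemm:BiderivationFromBivector}: for a single product $\delta_a\star\delta_b$ one has $\textrm{tr}_{\mathcal A^+}(\delta_a\star\delta_b)(x\otimes y)=\delta_b(y)'\delta_a(x)''\otimes\delta_a(x)'\delta_b(y)''$, and then $R(x\otimes y)=\delta_2(y)'\delta_1(x)''\otimes\delta_1(x)'\delta_2(y)''-\widetilde{\delta_1}(y)'\widetilde{\delta_2}(x)''\otimes\widetilde{\delta_2}(x)'\widetilde{\delta_1}(y)''$.

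Next I would substitute the generator values. For $R(u\otimes v)$: from $\delta_1(u)=1\otimes u$ we get $\delta_1(u)'=1,\ \delta_1(u)''=u$; from $\delta_2(v)=0$ the first term vanishes; from $\widetilde{\delta_2}(u)=1\otimes u$ we get $\widetilde{\delta_2}(u)'=1,\ \widetilde{\delta_2}(u)''=u$ and from $\widetilde{\delta_1}(v)=v\otimes1$ we get $\widetilde{\delta_1}(v)'=v,\ \widetilde{\delta_1}(v)''=1$, so the second term contributes $-(v\cdot u)\otimes(1\cdot 1)=-vu\otimes1$, matching $\db{u,v}_K=-vu\otimes1$. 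For $R(v\otimes u)$: from $\widetilde{\delta_2}(v)=0$ the tilde term vanishes; from $\delta_1(v)=1\otimes v$ we get $\delta_1(v)'=1,\ \delta_1(v)''=v$ and from $\delta_2(u)=u\otimes1$ we get $\delta_2(u)'=u,\ \delta_2(u)''=1$, so the first term gives $(u\cdot v)\otimes(1\cdot1)=uv\otimes1$, matching $\db{v,u}_K=uv\otimes1$. For $R(u\otimes u)$: the $\delta_1\star\delta_2$ piece gives $\delta_2(u)'\delta_1(u)''\otimes\delta_1(u)'\delta_2(u)''=u\cdot u\otimes 1\cdot 1=u^2\otimes1$, while the $\widetilde{\delta_2}\star\widetilde{\delta_1}$ piece gives $\widetilde{\delta_1}(u)'\widetilde{\delta_2}(u)''\otimes\widetilde{\delta_2}(u)'\widetilde{\delta_1}(u)''=u\cdot u\otimes 1\cdot 1=u^2\otimes1$; the two cancel and $R(u\otimes u)=0=\db{u,u}_K$. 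Finally $R(v\otimes v)=0$ trivially since $\delta_2(v)=\widetilde{\delta_2}(v)=0$ kill both terms, matching $\db{v,v}_K=0$.

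Having matched all four generator values, I would invoke the uniqueness clause: a biderivation on a free algebra is determined by its values on generators (this is exactly the content of the inductive extension in the proof of Lemma \ref{lemm:NXNDoubleBracket} applied at the level of $\mathcal A^+$ rather than its representation ring), hence $R=\db{\_}_K$ as maps $\mathcal A^+\otimes\mathcal A^+\to\mathcal A^+\otimes\mathcal A^+$. I do not expect any serious obstacle here; the only point requiring a little care is bookkeeping of the Sweedler indices and the order of multiplication in the two factors of the tensor product, since the formula in Corollary \ref{lemm:BiderivationFromBivector} pairs $\delta_1''(a)$ with $\delta_2(b)'$ on the left and $\delta_1(a)'$ with $\delta_2(b)''$ on the right — getting this pairing and the sign of the $\widetilde{\delta_2}\star\widetilde{\delta_1}$ term right is the whole computation. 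One should also note, as a consistency check, that $R$ is manifestly non-skew-symmetric (it is a difference of two traces of ordered products that are not swaps of one another), which is consistent with $\db{\_}_K$ being a genuinely modified, non-skew-symmetric double bracket.
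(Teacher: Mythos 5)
Your proposal is correct and follows exactly the paper's own (much terser) argument: check agreement on the generator pairs and then invoke Corollary \ref{lemm:BiderivationFromBivector} together with freeness of $\mathcal A^+$ to conclude equality of the two biderivations everywhere. Your explicit Sweedler-index computations for the four generator pairs all check out, so you have simply filled in the details the paper leaves implicit.
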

\begin{proof}
On generators of $\mathcal A$ formula above coincide with an example of modified double Poisson bracket $\db{\_,\_}_K$ defined in (\ref{eq:DoubleBracketForKontsevichSystem}). From Corollary \ref{lemm:BiderivationFromBivector} we conclude that they coincide for the entire domain $\mathcal A\otimes\mathcal A$.
\end{proof}

Thus $\delta_1\star\delta_2- \widetilde{\delta_2}\star\widetilde{\delta_1}\in (\mathcal{DA})_2$ provides an essential example of a modified double Poisson bivector.

\section{Brackets on representation algebras}
\label{sec:BracketsOnRepresentationAlgebras}

In \cite{Turaev'2014,MassuyeauTuraev'2015} G.~Massuyeau and V.~Turaev suggested that double Poisson brackets induce Poisson brackets on representation algebras. In particular, it was shown that for each associative algebra $\mathcal A$ and coalgebra $\mathcal M$ one can define a commutative associative algebra $\mathcal A_{\mathcal M}$ satisfying the following universal property: for any commutative algebra $\mathcal B$ consider $Hom(\mathcal M,\mathcal B)$ as an associative algebra with convolution product, then for each $s:\;\mathcal A\rightarrow Hom(\mathcal M,\mathcal B)$ there exists a unique $r:\;\mathcal A_{\mathcal M}\rightarrow\mathcal B$ s.t. the following diagram is commutative in the category of associative algebras
\begin{align*}
\xymatrix{
A\ar[r]\ar[dr]_s&Hom(\mathcal M,\mathcal A_{\mathcal M})\ar[d]^r\\
&Hom(\mathcal M,\mathcal B).
}
\end{align*}

In this section we investigate brackets on representation algebras induced by the modified double Poisson brackets. We start by a very brief review of representation algebras in Section \ref{sec:RepresentationAlgebras} followed by Section \ref{sec:ModifiedBracketsOnRepresentationAlgebras} in which we show that modified double bracket induces a Poisson bracket on the ``trace'' subalgebra of the representation algebra. The ``trace'' subalgebra acts (as a Lie algebra) on the entire representation algebra by derivations.

\subsection{Representation Algebras}
\label{sec:RepresentationAlgebras}

Throughout this section let $\mathcal A$ be an associative algebra and $\mathcal M$ be a coassociative coalgebra with comultiplication $\Delta:\mathcal M\rightarrow\mathcal M\otimes\mathcal M$ and counit $\epsilon:\mathcal M\rightarrow\mathbb C$. Since $\Delta$ is coassociative, for each $m>1$ comultiplication induces a unique map $\Delta^m:\mathcal M\rightarrow\mathcal M^{\otimes m}$. It will be useful for us to employ the following notations: $\Delta^m(\alpha)=:\alpha^1\otimes\dots\otimes\alpha^m$ whenever $\alpha\in\mathcal M$.

Assume further, that $\mathcal M$ is equipped with:
\begin{enumerate}
\item Bilinear form $\nu:\mathcal M\otimes\mathcal M\rightarrow\mathbb C$ s.t. $\forall\alpha,\beta\in\mathcal M,\;\nu(\alpha,\beta^2)\beta^1\otimes\beta^3= \nu(\beta,\alpha^2)\alpha^1\otimes\alpha^3;$
\item A ``trace'' element $\tau\in\mathcal M,$ s.t. $\forall\alpha\in\mathcal M,\;\nu(\tau,\alpha)=\epsilon(\alpha).$
\end{enumerate}
Together with comultiplication a bilinear form induces a map
\begin{align*}
\bar{\nu}:\mathcal M\otimes\mathcal M\rightarrow\mathcal M\otimes\mathcal M,\quad\bar{\nu}(\alpha,\beta)=\nu(\alpha,\beta^2) \beta^1\otimes\beta^3.
\end{align*}
We denote the image of this map in Sweedler notations as $\bar{\nu}(\alpha,\beta)=\alpha_\beta\otimes\beta^\alpha.$

The requirement for $\nu$ formulated above is equivalent to the symmetrycity of $\bar{\nu}$, namely
\begin{align*}
\nu(\alpha,\beta^2)\beta^1\otimes\beta^3= \nu(\beta,\alpha^2)\alpha^1\otimes\alpha^3
\quad\Longleftrightarrow\quad
\alpha_\beta\otimes\beta^\alpha=\beta_\alpha\otimes\alpha^\beta
\end{align*}
\begin{definition}\cite{Turaev'2014}
Let $\mathcal A_{\mathcal M}$ be a commutative algebra generated by all such elements $a_{\alpha},\;a\in A,\,\alpha\in\mathcal M,$ subject to the relations
\begin{enumerate}
\item $\forall k\in\mathbb C,\,a,b\in\mathcal A,$ and $\alpha, \beta\in\mathcal M,$
\begin{align*}
k(a_\alpha)=(ka)_\alpha=a_{k\alpha},\quad (a+b)_\alpha=a_\alpha+b_\alpha,\quad a_{\alpha+\beta}=a_{\alpha}+a_{\beta};
\end{align*}
\item $\forall a,b\in\mathcal A$ and $\alpha\in\mathcal M$,
\begin{align*}
(ab)_\alpha=a_{\alpha^1}b_{\alpha^2}.
\end{align*}
\end{enumerate}
We call $\mathcal A_{\mathcal M}$ a \textbf{representation algebra} of
$\mathcal A$ in $\mathcal M$.
\end{definition}

\begin{lemma}\cite{Turaev'2014}
\begin{subequations}
\begin{align}
\alpha_\beta\otimes(\beta^\alpha)^1\otimes(\beta^\alpha)^2=& \alpha_{(\beta^1)}\otimes(\beta^1)^\alpha\otimes\beta^2,\\
(\alpha_\beta)^1\otimes(\alpha_\beta)^2\otimes\beta^\alpha=& \beta^1\otimes\alpha_{(\beta^2)}\otimes(\beta^2)^\alpha.
\end{align}
\end{subequations}
\label{lemm:CoalgebraSymmetricBilinearFormIdentities}
\end{lemma}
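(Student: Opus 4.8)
\textbf{Proof plan for Lemma \ref{lemm:CoalgebraSymmetricBilinearFormIdentities}.}

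The plan is to derive each identity by unfolding the definition $\bar\nu(\alpha,\beta)=\nu(\alpha,\beta^2)\,\beta^1\otimes\beta^3$ together with the Sweedler abbreviation $\bar\nu(\alpha,\beta)=\alpha_\beta\otimes\beta^\alpha$, and then apply coassociativity of $\Delta$ to rearrange the iterated comultiplications. First I would rewrite the left-hand side of the first identity: by definition $\alpha_\beta\otimes\beta^\alpha=\nu(\alpha,\beta^2)\,\beta^1\otimes\beta^3$, so applying $\Delta$ to the second tensor slot gives $\alpha_\beta\otimes(\beta^\alpha)^1\otimes(\beta^\alpha)^2=\nu(\alpha,\beta^2)\,\beta^1\otimes(\beta^3)^1\otimes(\beta^3)^2$. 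Now invoke coassociativity, which lets me replace $\beta^1\otimes\beta^2\otimes(\beta^3)^1\otimes(\beta^3)^2$ by $\beta^1\otimes\beta^2\otimes\beta^3\otimes\beta^4$ (i.e.\ $\Delta^4$), and likewise on the right-hand side splitting $\beta^1$ yields $\alpha_{(\beta^1)}\otimes(\beta^1)^\alpha\otimes\beta^2$ expanded via $\Delta^2(\beta^1)$ and then $\Delta^2$ of the original to a total of $\Delta^4$. After both sides are written in terms of a single $\Delta^4(\beta)=\beta^1\otimes\beta^2\otimes\beta^3\otimes\beta^4$ with the $\nu$ paired against the appropriate middle factor, the two expressions become literally the same symbol, so the identity follows. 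The second identity is handled symmetrically, applying $\Delta$ to the first tensor slot of $\bar\nu(\alpha,\beta)$ and splitting $\beta^2$ on the right-hand side instead.

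The only subtlety, and the step I would be most careful about, is bookkeeping the indices of $\Delta^m$ correctly: one must check that the comultiplication factor consumed by $\nu$ ends up in the same position on both sides once everything is reduced to $\Delta^4(\beta)$. Concretely, in the first identity the left side produces $\nu(\alpha,\beta^2)$ paired with the second of four factors, with surviving tensor legs $\beta^1$, $\beta^3$, $\beta^4$; the right side produces $\nu(\alpha,(\beta^1)^2)=\nu(\alpha,\beta^2)$ after re-coassociating $\Delta^2(\beta^1)$ into $\beta^1\otimes\beta^2$ and shifting the remaining legs, again leaving $\beta^1$, $\beta^3$, $\beta^4$ in the three open slots. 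Matching these is routine once the coassociativity identifications $(\Delta\otimes 1)\Delta=(1\otimes\Delta)\Delta$ are applied consistently; no use of the symmetry hypothesis on $\nu$ or of the trace element $\tau$ is needed here, since these are structural identities about $\bar\nu$ and $\Delta$ alone. I expect the whole argument to be a few lines of Sweedler manipulation with the main (mild) obstacle being purely notational index-tracking rather than any conceptual difficulty.
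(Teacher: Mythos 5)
The paper states this lemma without proof, citing \cite{Turaev'2014}, so there is no in-paper argument to compare against; your derivation is the natural (and essentially only) one, and it is correct. Your index bookkeeping checks out: for the first identity both sides reduce to $\nu(\alpha,\beta^2)\,\beta^1\otimes\beta^3\otimes\beta^4$ in terms of $\Delta^4(\beta)$, and for the second both sides reduce to $\nu(\alpha,\beta^3)\,\beta^1\otimes\beta^2\otimes\beta^4$; you are also right that only coassociativity and the definition of $\bar\nu$ are used, with no appeal to the symmetry of $\nu$ or to $\tau$.
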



\subsection{Modified brackets on representation algebras}
\label{sec:ModifiedBracketsOnRepresentationAlgebras}

\begin{lemma}
Modified double Poisson bracket $\db{,}$ induces a biderivation on representation algebra $A_{\mathcal M}$
\begin{align*}
\{,\}^{\mathcal M}:\mathcal A_{\mathcal M}\otimes\mathcal A_{\mathcal M}\rightarrow\mathcal A_{\mathcal M}\qquad \{a_\alpha,b_\beta\}^{\mathcal M}:=\db{a,b}'_{\alpha_\beta} \db{a,b}''_{\beta^\alpha} =\nu(\alpha,\beta^{2})\,\db{a,b}'_{\beta^1}\,\db{a,b}''_{\beta^2}
\end{align*}
\end{lemma}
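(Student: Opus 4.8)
The plan is to verify directly that the formula
\[
\{a_\alpha,b_\beta\}^{\mathcal M}:=\db{a,b}'_{\alpha_\beta}\,\db{a,b}''_{\beta^\alpha}
\]
is well defined on generators and extends to a biderivation of the commutative algebra $\mathcal A_{\mathcal M}$. First I would check that the right-hand side respects the defining relations of $\mathcal A_{\mathcal M}$: linearity in $a$ (respectively $b$) is immediate from linearity of $\db{\_}$ in its first (respectively second) slot, and linearity in $\alpha$ (respectively $\beta$) follows from bilinearity of $\nu$ together with linearity of $\Delta$, after rewriting the expression in the explicit form $\nu(\alpha,\beta^2)\,\db{a,b}'_{\beta^1}\,\db{a,b}''_{\beta^3}$. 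Since $\mathcal A_{\mathcal M}$ is commutative, to get a biderivation it then suffices to check the two Leibniz rules
\[
\{(ab)_\alpha,c_\gamma\}^{\mathcal M}=a_{?}\{\dots\}+\{\dots\}b_{?},\qquad
\{a_\alpha,(bc)_\gamma\}^{\mathcal M}=b_{?}\{\dots\}+\{\dots\}c_{?},
\]
using the relation $(ab)_\alpha=a_{\alpha^1}b_{\alpha^2}$ to reduce products in $\mathcal A$ to generators, and then to use these rules to extend $\{\_,\_\}^{\mathcal M}$ from generators $a_\alpha$ to all of $\mathcal A_{\mathcal M}$, with uniqueness automatic from the biderivation property.

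The heart of the argument is the Leibniz rule in, say, the second argument. Expanding $\{a_\alpha,(bc)_\gamma\}^{\mathcal M}=\{a_\alpha, b_{\gamma^1}c_{\gamma^2}\}^{\mathcal M}$ and demanding that this equal $b_{\gamma^1}\{a_\alpha,c_{\gamma^2}\}^{\mathcal M}+\{a_\alpha,b_{\gamma^1}\}^{\mathcal M}c_{\gamma^2}$, I would substitute the modified Leibniz identity \eqref{eq:ModifiedDoublePoissonLeibnitz1}, $\db{a,bc}=(b\otimes1)\db{a,c}+\db{a,b}(1\otimes c)$, into the left side. The two terms then read $b\,\db{a,c}'\otimes\db{a,c}''$ and $\db{a,b}'\otimes\db{a,b}''c$; applying the map $(-)_{\alpha_\gamma}\otimes(-)_{\gamma^\alpha}$ and using that $\mathcal A_{\mathcal M}$-indices split multiplicatively along $\Delta$, i.e. $(xy)_\delta=x_{\delta^1}y_{\delta^2}$, the $\mathcal M$-label $\gamma$ attached to the product $b\cdot\db{a,c}'$ (or $\db{a,b}''\cdot c$) must be resolved by the coassociativity identities of Lemma \ref{lemm:CoalgebraSymmetricBilinearFormIdentities}, which precisely describe how $\alpha_\beta\otimes\beta^\alpha$ interacts with further comultiplications of $\beta$. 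Matching the resulting three-fold Sweedler expressions on both sides is then a bookkeeping exercise in which Lemma \ref{lemm:CoalgebraSymmetricBilinearFormIdentities} is used exactly once per term; the left Leibniz rule in the first argument is handled symmetrically using \eqref{eq:ModifiedDoublePoissonLeibnitz2} and the symmetry $\alpha_\beta\otimes\beta^\alpha=\beta_\alpha\otimes\alpha^\beta$.

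The main obstacle I anticipate is purely notational: keeping the Sweedler indices of $\mathcal M$ straight while simultaneously tracking the two outputs $\db{a,b}'$ and $\db{a,b}''$ of the bracket, since a single application of the coalgebra identities shuffles a $\Delta$ applied to $\beta$ past the pairing $\bar\nu$, and one must be careful which of the two legs of $\bar\nu(\alpha,\beta)$ absorbs the extra comultiplication. Once the convention $\db{a,b}'_{\alpha_\beta}\,\db{a,b}''_{\beta^\alpha}=\nu(\alpha,\beta^2)\,\db{a,b}'_{\beta^1}\,\db{a,b}''_{\beta^3}$ is fixed, however, each Leibniz check becomes a finite manipulation with no genuine subtlety, and well-definedness on relations plus these two identities give the claimed biderivation.
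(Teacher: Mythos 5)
Your proposal takes essentially the same route as the paper's proof: define $\{,\}^{\mathcal M}$ on generators, extend by the Leibniz rule, and verify compatibility with the defining relation $(ab)_\alpha=a_{\alpha^1}b_{\alpha^2}$ by substituting the modified double Leibniz identities and resolving the Sweedler indices via Lemma \ref{lemm:CoalgebraSymmetricBilinearFormIdentities}, exactly as done there. Your explicit form with $\db{a,b}''_{\beta^3}$ is the one consistent with the definition $\bar{\nu}(\alpha,\beta)=\nu(\alpha,\beta^2)\,\beta^1\otimes\beta^3$, so no issue there.
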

\begin{proof}
Define $\{,\}^{\mathcal M}$ on generators of $\mathcal A_{\mathcal M}$ as above and then extend to arbitrary pairs of monomials by Leibnitz identity. We have to show that defining relations of $\mathcal A_{\mathcal M}$ are annihilated by $\{,\}^{\mathcal M}$. For all $a,b,c\in\mathcal A$ and $\alpha,\beta\in\mathcal A_{\mathcal M}$ we get
\begin{align*}
\{(ab)_\alpha,c_\beta\}^{\mathcal M} =&\db{ab,c}'_{\alpha_\beta}\db{ab,c}''_{\beta^\alpha}\\ =&\db{b,c}'_{\alpha_\beta}(a\db{b,c}'')_{\beta^\alpha} +(\db{a,c}'b)_{\alpha_\beta}\db{a,c}''_{\beta^\alpha}\\ =&\db{b,c}'_{\alpha_\beta}a_{(\beta^\alpha)^1} \db{b,c}''_{(\beta^\alpha)^2}+\db{a,c}'_{(\alpha_\beta)^1} b_{(\alpha_\beta)^2}\db{a,c}''_{\beta^\alpha}\\
\textrm{(by Lemma \ref{lemm:CoalgebraSymmetricBilinearFormIdentities})} =&a_{\alpha^1}\db{b,c}'_{(\alpha^2)_\beta} \db{b,c}''_{\beta^{(\alpha^2)}}+b_{\alpha^2} \db{a,c}'_{(\alpha^1)_\beta}\db{a,c}''_{\beta^{(\alpha^1)}}\\
=&a_{\alpha^1}\{b_{\alpha^2},c_\beta\}^{\mathcal M} +b_{\alpha^2}\{a_{\alpha^1},c_\beta\}^{\mathcal M}\\
=&\{a_{\alpha^1}b_{\alpha^2},c_\beta\}^{\mathcal M}.
\end{align*}
Similar computation shows that $\{a_{\alpha},(bc)_{\beta}\}^{\mathcal M}=\{a_{\alpha},b_{\beta^1}c_{\beta^2}\}^{\mathcal M}.$
\end{proof}

\begin{lemma}
For all $\alpha\in\mathcal M$ and $a,b\in\mathcal A$:
\begin{align*}
\{a_{\tau},b_{\alpha}\}^{\mathcal M}=\left(\{a,b\}\right)_{\alpha}.
\end{align*}
\label{lemma:RepresentationAlgebraTraceBracket}
\end{lemma}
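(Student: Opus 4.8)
The plan is to unwind the definition of $\{\_,\_\}^{\mathcal M}$ at the generators $a_\tau$ and $b_\alpha$ and use the defining property of the trace element, namely $\nu(\tau,\gamma)=\epsilon(\gamma)$ for all $\gamma\in\mathcal M$. First I would write
\begin{align*}
\{a_\tau,b_\alpha\}^{\mathcal M}=\db{a,b}'_{\tau_\alpha}\,\db{a,b}''_{\alpha^\tau}=\nu(\tau,\alpha^2)\,\db{a,b}'_{\alpha^1}\,\db{a,b}''_{\alpha^3},
\end{align*}
where in the second equality I expanded $\bar\nu(\tau,\alpha)=\nu(\tau,\alpha^2)\,\alpha^1\otimes\alpha^3$ using the coassociative coproduct $\Delta^3(\alpha)=\alpha^1\otimes\alpha^2\otimes\alpha^3$. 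Now apply $\nu(\tau,\alpha^2)=\epsilon(\alpha^2)$ and the counit axiom $\epsilon(\alpha^2)\,\alpha^1\otimes\alpha^3=\Delta(\alpha)=\alpha^1\otimes\alpha^2$ (reindexing), which collapses the expression to $\db{a,b}'_{\alpha^1}\,\db{a,b}''_{\alpha^2}$.

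The second step is to recognize that $\db{a,b}'_{\alpha^1}\,\db{a,b}''_{\alpha^2}$ is exactly the image under $A\to A_{\mathcal M}$ of $\mu(\db{a,b})=\db{a,b}'\db{a,b}''=\{a,b\}$, using defining relation (2) of the representation algebra, $(xy)_\alpha=x_{\alpha^1}y_{\alpha^2}$, read with $x=\db{a,b}'$ and $y=\db{a,b}''$. Hence $\{a_\tau,b_\alpha\}^{\mathcal M}=\big(\db{a,b}'\db{a,b}''\big)_\alpha=(\{a,b\})_\alpha$, which is the claim. Strictly speaking the relation $(xy)_\alpha=x_{\alpha^1}y_{\alpha^2}$ is stated for $x,y\in\mathcal A$; since $\db{a,b}'\otimes\db{a,b}''$ is a (sum of) tensor(s) of elements of $\mathcal A$, linearity in each Sweedler leg lets one apply the relation termwise, so no extra work is needed there.

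The one point that requires a little care — and the only place a subtlety could hide — is the bookkeeping of the Sweedler indices when passing from $\bar\nu(\tau,\alpha)$ to the counit identity: one must be consistent about whether the $\alpha^2$ appearing inside $\nu(\tau,\alpha^2)$ is the middle factor of $\Delta^3$ and confirm that $\epsilon$ applied there genuinely contracts to give $\Delta(\alpha)$. This is a routine consequence of coassociativity and the counit axioms, but it is worth spelling out one line of it; everything else is a direct substitution, so I do not expect any real obstacle. An alternative, essentially equivalent, route is to invoke Lemma \ref{lemm:CoalgebraSymmetricBilinearFormIdentities} with $\beta=\tau$ together with $\nu(\tau,\_)=\epsilon$, but the direct counit computation is the shortest.
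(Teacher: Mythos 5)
Your proof is correct and follows essentially the same route as the paper: both reduce $\tau_\alpha\otimes\alpha^\tau$ to $\alpha^1\otimes\alpha^2$ via $\nu(\tau,\_)=\epsilon$ and the counit axiom, then contract with relation $(xy)_\alpha=x_{\alpha^1}y_{\alpha^2}$ to recognize $\mu(\db{a,b})_\alpha$. Your explicit handling of the Sweedler index bookkeeping (the $\alpha^3$ leg) and of termwise linearity is slightly more careful than the paper's one-line computation, but it is the same argument.
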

\begin{proof}
First, we use the fact that $\tau$ is conjugate to counit to show that for all $\alpha\in\mathcal M$
\begin{align}
\tau_\alpha\otimes\alpha^\tau:=\nu(\tau,\alpha^2) \alpha^1\otimes\alpha^2=\epsilon(\alpha^2)\alpha^1\otimes\alpha^3= \alpha^1\otimes\alpha^2.
\label{eq:RepresentationAlgebraTraceBracket}
\end{align}
Now,
\begin{align*}
\{a_{\tau},b_{\alpha}\}^{\mathcal M}=&\;\db{a,b}'_{\tau_{\alpha}} \db{a,b}''_{\alpha^\tau}\\ \stackrel{\mathclap{\tiny \mbox{(\ref{eq:RepresentationAlgebraTraceBracket})}}}{=}&\; \db{a,b}'_{\alpha^1}\db{a,b}''_{\alpha^2} =\left(\db{a,b}'\db{a,b}''\right)_{\alpha}\\
=&\left(\{a,b\}\right)_{\alpha}.
\end{align*}
\end{proof}
\begin{proposition}
The following restriction
\begin{align*}
\{,\}^{\mathcal M}:\;\mathcal A_{\tau}\otimes\mathcal A_{\mathcal M}\rightarrow\mathcal A_{\mathcal M}
\end{align*}
satisfies Jacobi identity for left Loday bracket, namely for all $\alpha\in\mathcal M$ and $a,b,c\in\mathcal A$:
\begin{align*}
\{a_\tau,\{b_\tau,c_\alpha\}^{\mathcal M}\}^{\mathcal M}-\{b_\tau,\{a_\tau,c_\alpha\}^{\mathcal M}\}^{\mathcal M} =\{\{a_\tau,b_\tau\}^{\mathcal M},c_\alpha\}^{\mathcal M}.
\end{align*}
\end{proposition}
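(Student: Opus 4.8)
The plan is to reduce the Jacobi identity for $\{,\}^{\mathcal M}$ on the ``trace'' subalgebra to the corresponding identity for the $H_0$-Poisson bracket $\{,\}$ on $\mathcal A$, which is available as equation (\ref{eq:H0LeftLodayJacobi}) in Corollary \ref{cor:H0FromModifiedDoublePoisson}. The key observation is Lemma \ref{lemma:RepresentationAlgebraTraceBracket}, which says that bracketing by a ``trace'' generator $a_\tau$ acts on any generator $b_\alpha$ simply by $\{a_\tau,b_\alpha\}^{\mathcal M}=(\{a,b\})_\alpha$. In other words, the operator $\{a_\tau,-\}^{\mathcal M}$ on $\mathcal A_{\mathcal M}$ is the unique derivation extending the map $b_\alpha\mapsto(\{a,b\})_\alpha$; call this derivation $\theta_a$. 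What I would first verify is that $\theta_a$ is well-defined directly as a derivation of $\mathcal A_{\mathcal M}$ — i.e.\ that the map $b_\alpha \mapsto (\{a,b\})_\alpha$ respects the defining relations of $\mathcal A_{\mathcal M}$. Linearity in $b$ and in $\alpha$ is immediate; compatibility with $(bc)_\alpha = b_{\alpha^1}c_{\alpha^2}$ follows because $\{a,-\}$ is a derivation of $\mathcal A$ (equation (\ref{eq:H0Derivation})) together with the coassociativity bookkeeping already used in the previous lemma. This is essentially contained in the preceding lemmas, so it is not the obstacle.

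Given this, the left-hand side of the claimed identity is $(\theta_a\theta_b - \theta_b\theta_a)(c_\alpha)$ and the right-hand side is $\theta_{\{a,b\}}(c_\alpha)$, where I am using Lemma \ref{lemma:RepresentationAlgebraTraceBracket} once more to rewrite $\{a_\tau,b_\tau\}^{\mathcal M}=(\{a,b\})_\tau$ and hence $\{\{a_\tau,b_\tau\}^{\mathcal M},-\}^{\mathcal M}=\theta_{\{a,b\}}$. So the proposition is equivalent to the operator identity $[\theta_a,\theta_b]=\theta_{\{a,b\}}$ on $\mathcal A_{\mathcal M}$. Since all three operators are derivations of the commutative algebra $\mathcal A_{\mathcal M}$ (a commutator of derivations is a derivation, and $\theta_{\{a,b\}}$ is one by the previous paragraph applied to the element $\{a,b\}\in\mathcal A$), and $\mathcal A_{\mathcal M}$ is generated by the $c_\alpha$, it suffices to check the identity on the generators $c_\alpha$.

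On a generator, $[\theta_a,\theta_b](c_\alpha)=\theta_a\big((\{b,c\})_\alpha\big)-\theta_b\big((\{a,c\})_\alpha\big)=\big(\{a,\{b,c\}\}\big)_\alpha-\big(\{b,\{a,c\}\}\big)_\alpha$, using Lemma \ref{lemma:RepresentationAlgebraTraceBracket} twice, and $\theta_{\{a,b\}}(c_\alpha)=\big(\{\{a,b\},c\}\big)_\alpha$. Thus the desired equality on generators is exactly the image under $x\mapsto x_\alpha$ of the left Loday--Jacobi identity (\ref{eq:H0LeftLodayJacobi}) for the $H_0$-Poisson bracket, $\{a,\{b,c\}\}-\{b,\{a,c\}\}=\{\{a,b\},c\}$, which holds by Corollary \ref{cor:H0FromModifiedDoublePoisson}. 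Applying the (well-defined) linear assignment $x\mapsto x_\alpha$ to both sides finishes the argument.

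The only genuinely delicate point is the well-definedness of $\theta_a$ as a derivation — i.e.\ that $b_\alpha\mapsto(\{a,b\})_\alpha$ is compatible with $(bc)_\alpha=b_{\alpha^1}c_{\alpha^2}$ — but this is forced by equation (\ref{eq:H0Derivation}) and is in any case subsumed by the fact, already established, that $\{a_\tau,-\}^{\mathcal M}$ is a well-defined biderivation; so no new obstruction arises. I would therefore expect the write-up to be short: state that $\{a_\tau,-\}^{\mathcal M}$ equals the derivation $\theta_a$ by Lemma \ref{lemma:RepresentationAlgebraTraceBracket}, reduce to $[\theta_a,\theta_b]=\theta_{\{a,b\}}$, reduce that to generators since all terms are derivations, and invoke (\ref{eq:H0LeftLodayJacobi}).
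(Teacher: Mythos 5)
Your proof is correct and follows essentially the same route as the paper, which simply composes the left Loday--Jacobi identity (\ref{eq:H0LeftLodayJacobi}) with Lemma \ref{lemma:RepresentationAlgebraTraceBracket}. In fact your argument can be shortened further: since $\{b_\tau,c_\alpha\}^{\mathcal M}=(\{b,c\})_\alpha$ is again a single generator, the entire computation never leaves the generators and the discussion of extending $\theta_a$ as a derivation is not needed for the identity as stated.
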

\begin{proof}
Compose (\ref{eq:H0LeftLodayJacobi}) with Lemma \ref{lemma:RepresentationAlgebraTraceBracket}
\end{proof}

\begin{corollary}
Subspace $\mathcal A_\tau\subset\mathcal A_{\mathcal M}$ is a Lie algebra w.r.t. $\{,\}^{\mathcal M}$.
\end{corollary}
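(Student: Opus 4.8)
The plan is to verify the three defining properties of a Lie algebra for the restriction of $\{,\}^{\mathcal M}$ to $\mathcal A_\tau\otimes\mathcal A_\tau$ — closure, skew-symmetry, and the Jacobi identity — reducing each to results already in hand. Since $\mathcal A_\tau$ is the linear span of the generators $a_\tau$ and $\{,\}^{\mathcal M}$ is bilinear, it suffices throughout to argue on such generators. The cleanest organizing principle is to show that $a\mapsto a_\tau$ descends to a surjective bracket homomorphism from the Lie algebra $(\mathcal A_\natural,\{,\}^{Lie})$ (shown to be a Lie algebra in the corollary following Corollary \ref{cor:H0FromModifiedDoublePoisson}) onto $(\mathcal A_\tau,\{,\}^{\mathcal M})$; a homomorphic image of a Lie algebra is a Lie algebra.

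First I would record closure: Lemma \ref{lemma:RepresentationAlgebraTraceBracket} with $\alpha=\tau$ gives $\{a_\tau,b_\tau\}^{\mathcal M}=(\{a,b\})_\tau\in\mathcal A_\tau$, so the bracket restricts to $\mathcal A_\tau\otimes\mathcal A_\tau\to\mathcal A_\tau$; the same identity shows that once $a\mapsto a_\tau$ is known to factor through $\mathcal A_\natural=\mathcal A/[\mathcal A,\mathcal A]$, the induced map $q\colon\mathcal A_\natural\to\mathcal A_\tau$ satisfies $\{q(\bar a),q(\bar b)\}^{\mathcal M}=q(\{\bar a,\bar b\}^{Lie})$. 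The Jacobi identity on $\mathcal A_\tau$ then follows either by transport along the surjection $q$, or — more directly, mirroring the proof just given in this section — by specializing the third argument of the preceding Proposition to $\mathcal A_\tau$ (take $\alpha=\tau$ there and invoke trilinearity of the left-Jacobi expression), which yields the left Loday--Jacobi identity on $\mathcal A_\tau$; combined with skew-symmetry this is the ordinary Jacobi identity.

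Everything therefore hinges on one point, which I expect to be the main obstacle: that $a\mapsto a_\tau$ annihilates $[\mathcal A,\mathcal A]$, i.e. that $(pq)_\tau=(qp)_\tau$ for all $p,q\in\mathcal A$ — the ``trace property'' of $\tau$. Granting it, skew-symmetry is immediate, since by Lemma \ref{lemma:RepresentationAlgebraTraceBracket} applied twice,
\[
\{a_\tau,b_\tau\}^{\mathcal M}+\{b_\tau,a_\tau\}^{\mathcal M}=(\{a,b\}+\{b,a\})_\tau=0
\]
because $\{a,b\}+\{b,a\}\in[\mathcal A,\mathcal A]$ by the weak skew-symmetry axiom (\ref{eq:ModifiedDoublePoissonWeakSkewSymmetry}). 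To establish the trace property itself I would not use the bracket at all: unfolding $(pq)_\tau=p_{\tau^1}q_{\tau^2}$ through the defining relation $(ab)_\gamma=a_{\gamma^1}b_{\gamma^2}$ of $\mathcal A_{\mathcal M}$, and then exploiting the defining relation $\nu(\tau,\alpha)=\epsilon(\alpha)$ — in the form of (\ref{eq:RepresentationAlgebraTraceBracket}) — together with the symmetry of $\bar\nu$ and Lemma \ref{lemm:CoalgebraSymmetricBilinearFormIdentities}, one transposes the two legs of $\Delta\tau$ to get $p_{\tau^1}q_{\tau^2}=p_{\tau^2}q_{\tau^1}$; commutativity of $\mathcal A_{\mathcal M}$ then rewrites the right-hand side as $q_{\tau^1}p_{\tau^2}=(qp)_\tau$. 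This is essentially the content of the trace formalism of \cite{Turaev'2014}, where $\tau$ is introduced precisely so that $a\mapsto a_\tau$ plays the role of a trace; the point is simply that this is the one step where the coalgebra data $(\nu,\tau)$ — and not merely formal properties of $\{,\}^{\mathcal M}$ — must be used.
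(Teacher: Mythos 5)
Your proposal is correct and fills in exactly the argument the paper leaves implicit: the corollary is stated without proof, and the intended reasoning is precisely closure via Lemma \ref{lemma:RepresentationAlgebraTraceBracket} (with $\alpha=\tau$), the left Loday--Jacobi identity from the preceding Proposition specialized to $h=c_\tau$, and skew-symmetry from axiom (\ref{eq:ModifiedDoublePoissonWeakSkewSymmetry}) combined with the trace property $(pq)_\tau=(qp)_\tau$. You have also correctly isolated the trace property as the one step that uses the coalgebra data $(\nu,\tau)$ rather than formal properties of the bracket. The only caveat concerns your sketched derivation of that property: applying $\epsilon$ to one leg of the symmetry relation $\nu(\alpha,\beta^2)\beta^1\otimes\beta^3=\nu(\beta,\alpha^2)\alpha^1\otimes\alpha^3$ with $\alpha=\tau$ yields $\nu(\beta,\tau^1)\tau^2=\nu(\beta,\tau^2)\tau^1$ for all $\beta$, which is cocommutativity of $\Delta\tau$ only after pairing against $\nu$; to upgrade this to the identity $p_{\tau^1}q_{\tau^2}=p_{\tau^2}q_{\tau^1}$ in $\mathcal A_{\mathcal M}$ one needs either a nondegeneracy hypothesis on $\nu$ (not stated in this paper) or the trace property as established directly in \cite{Turaev'2014}. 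Since you explicitly defer to that reference, the argument stands, but the claim that the axioms listed here alone let one ``transpose the two legs of $\Delta\tau$'' should be softened or supplemented accordingly.
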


\begin{corollary}
Let $\mathbb C[\mathcal A_\tau]\subset\mathcal A_{\mathcal M}$ be a commutative algebra generated by $\mathcal A_\tau$, then the following restriction of $\{,\}^{\mathcal M}$
\begin{align*}
\{,\}^\tau:\;\mathbb C[\mathcal A_\tau]\otimes\mathbb C[\mathcal A_\tau]\rightarrow\mathbb C[\mathcal A_\tau]
\end{align*}
is a Poisson bracket.
\end{corollary}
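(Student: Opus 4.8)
The plan is to verify the three Poisson-bracket axioms for $\{,\}^\tau$ on $\mathbb C[\mathcal A_\tau]$: skew-symmetry, the Leibniz rule, and the Jacobi identity. The Leibniz rule in both arguments holds by the very construction of $\{,\}^{\mathcal M}$ as a biderivation (the preceding Lemma), and it is automatically compatible with the commutative multiplication of $\mathbb C[\mathcal A_\tau]$ since that algebra is a subalgebra of $\mathcal A_{\mathcal M}$; so the only real content is skew-symmetry and Jacobi, and by the biderivation property it suffices to check both on the generators $a_\tau$, $a\in\mathcal A$.

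For skew-symmetry, I would use Lemma \ref{lemma:RepresentationAlgebraTraceBracket} to reduce: $\{a_\tau,b_\tau\}^{\mathcal M}=(\{a,b\})_\tau$, hence $\{a_\tau,b_\tau\}^{\mathcal M}+\{b_\tau,a_\tau\}^{\mathcal M}=(\{a,b\}+\{b,a\})_\tau$. Now the weak skew-symmetry axiom (\ref{eq:ModifiedDoublePoissonWeakSkewSymmetry}) gives $\{a,b\}+\{b,a\}\in[\mathcal A,\mathcal A]$, so I must show that the map $a\mapsto a_\tau$ kills $[\mathcal A,\mathcal A]$, i.e. $(ab)_\tau=(ba)_\tau$ in $\mathcal A_{\mathcal M}$. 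This is a cyclicity property of the trace element: $(ab)_\tau=a_{\tau^1}b_{\tau^2}$, and one needs the coalgebra datum — the form $\nu$, the element $\tau$ with $\nu(\tau,\cdot)=\epsilon$, and the symmetry of $\bar\nu$ — to force $a_{\tau^1}b_{\tau^2}=b_{\tau^1}a_{\tau^2}$. I expect this to follow from the symmetry condition on $\nu$ applied to $\tau$ together with $\nu(\tau,-)=\epsilon$; this cocommutativity-type statement about $\tau$ is the one genuinely new computation, and I would either cite it from \cite{Turaev'2014} or record it as a short sublemma. Once $(ab)_\tau=(ba)_\tau$ is known, skew-symmetry of $\{,\}^\tau$ on generators is immediate, and it propagates to all of $\mathbb C[\mathcal A_\tau]$ by the Leibniz rule.

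For the Jacobi identity, the previous Proposition already gives the left Loday Jacobi identity for the restriction $\{,\}^{\mathcal M}:\mathcal A_\tau\otimes\mathcal A_{\mathcal M}\to\mathcal A_{\mathcal M}$, in particular on triples $a_\tau,b_\tau,c_\tau$. Combining the left Loday Jacobi identity with skew-symmetry (already established on $\mathcal A_\tau$) yields the ordinary Jacobi identity on the generators, exactly as in the standard argument that a Loday bracket which is antisymmetric is a Lie bracket. So on $\mathcal A_\tau$ we have a genuine Lie bracket; to pass to the commutative algebra $\mathbb C[\mathcal A_\tau]$ it generates, I would run the same derivation-extension argument used in the proof of Proposition \ref{prop:H0BracketNXN}: the Jacobiator $\phi$ is a derivation in each slot, and since it vanishes on generators it vanishes everywhere. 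This last step is routine given the earlier Proposition, so I would simply say "by the same argument as in Proposition \ref{prop:H0BracketNXN}".

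The main obstacle is the single coalgebraic fact $(ab)_\tau=(ba)_\tau$ — equivalently, that $\tau$ behaves like a cocommutative/cyclic trace under $\bar\nu$ — since everything else is either already proved in the excerpt or a formal consequence of the biderivation property. Concretely I would prove it by writing $(ab)_\tau-(ba)_\tau=a_{\tau^1}b_{\tau^2}-b_{\tau^1}a_{\tau^2}$, rewriting $\tau^1\otimes\tau^2$ via the defining property $\nu(\tau,\alpha)=\epsilon(\alpha)$ and the symmetry $\alpha_\beta\otimes\beta^\alpha=\beta_\alpha\otimes\alpha^\beta$ of $\bar\nu$, and identifying the two terms; this is essentially the content of Lemma \ref{lemm:CoalgebraSymmetricBilinearFormIdentities} specialized to $\tau$, and I would cite \cite{Turaev'2014} for it if a clean reference is available. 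Everything then assembles: skew-symmetry plus Leibniz plus Jacobi on $\mathbb C[\mathcal A_\tau]$, which is the claim.
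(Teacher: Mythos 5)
Your proposal is correct and follows essentially the route the paper intends: the paper states this corollary without proof, as the assembly of the biderivation lemma (Leibniz), Lemma \ref{lemma:RepresentationAlgebraTraceBracket} together with (\ref{eq:ModifiedDoublePoissonWeakSkewSymmetry}) (skew-symmetry on $\mathcal A_\tau$, already packaged in the preceding corollary that $\mathcal A_\tau$ is a Lie algebra), and the left Loday Jacobi identity of the preceding proposition, extended to $\mathbb C[\mathcal A_\tau]$ by the derivation argument of Proposition \ref{prop:H0BracketNXN} --- exactly what you spell out. The one substantive ingredient you isolate, that $a\mapsto a_\tau$ kills $[\mathcal A,\mathcal A]$, is indeed used silently by the paper and is correctly deferred to \cite{Turaev'2014}.
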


\section*{Acknowledgements}

I would like to thank S.~Chung, Prof.~M.~Kontsevich, Prof.~G.~Powell, Prof.~M.~Shapiro, Prof.~V.~Turaev, and Prof.~C.~Weibel for useful remarks. I am especially grateful to Prof.~V.~Retakh and Prof.~V.~Roubtsov for formulating the problem, guidance and numerous fruitful discussions. I would also like to thank IHES for hospitality during my visit in January 2016, where part of this work was done.

\section{Conclusion and discussion}

Throughout this note we have shown that one can generalize fundamental results of \cite{VandenBergh'2008} with no assumption of skew-symmetricity of the double bracket
\begin{align*}
\db{a,b}'\otimes\db{a,b}''=-\db{b,a}''\otimes\db{b,a}'.
\end{align*}
We have presented an essential example of non-skew-symmetric modified double Poisson bracket and have constructed the corresponding modified double Poisson bi-vector.

We conclude by remark on Jacobi identity for the double Poisson bracket

\subsection*{Jacobi identity beyond triple derivations}

Jacobi identity for the Loday bracket induced by the modified double bracket can be presented in the following form
\begin{align}
0=&\{H_1,\{H_2,x\}\}-\{H_2,\{H_1,x\}\}-\{\{H_1,H_2\},x\}=\mu(D_1+D_2),
\label{eq:JacobiDecomposition}
\end{align}
where
\begin{align*}
D_1:=&R_{12}R_{23}-R_{23}R_{13}-R_{13}R_{12},\\
D_2:=&\sigma_{12}\left(R_{13}R_{23}-R_{21}R_{13}-R_{23}R_{12}\right),
\end{align*}
\begin{align*}
R_{m,n}:&\quad\mathcal A^{\otimes k}\rightarrow\mathcal A^{\otimes k},\quad R_{m,n}(a_1\otimes\dots\otimes a_k)=a_1\otimes\dots\otimes\underset{\displaystyle\underset i\uparrow}{\db{a_i,a_j}'}\otimes\dots\otimes\underset{\displaystyle\underset j\uparrow}{\db{a_i,a_j}''}\otimes\dots\otimes a_k.
\end{align*}

Definition of the usual double Poisson bracket \cite{VandenBergh'2008} included strong requirement of the skew-symmetry $R_{m,n}=-\sigma_{(m,n)}R_{n,m}\sigma_{(m,n)}$ and the so-called double Jacobi identity (analogue of Yang-Baxter equation for double bracket). Together they guarantee that $D_1$ and $D_2$ are triple derivations and vanish separately. This fact was heavily used in various classification problems of double Poisson brackets \cite{OdesskiiRubtsovSokolov'2013,Powell'2016}. However, this is no longer the case for modified double Poisson brackets (in particular, it fails for (\ref{eq:DoubleBracketForKontsevichSystem})). Indeed, consider a defect of $D_1$ being a derivation w.r.t. to the first argument
\begin{equation}
\begin{aligned}
D_1(a_1a_2\otimes b\otimes c)-(1\otimes a_1\otimes1)D_1(a_2\otimes b\otimes c)-D_1(a_1\otimes b\otimes c)(a_2\otimes1\otimes1)=\\
-\db{a_2,c}'\otimes\db{b,a_1}'\otimes\db{b,a_1}''\db{a_2,c}'' -\db{a_2,c}'\otimes\db{a_1,b}''\otimes\db{a_1,b}'\db{a_2,c}''.
\end{aligned}
\label{eq:YangBaxtexDefect}
\end{equation}
Beyond the skew-symmetric case, the right hand side of (\ref{eq:YangBaxtexDefect}) doesn't vanish in general. Instead, only the combination composed with multiplication map (\ref{eq:JacobiDecomposition}) vanishes.

The latter makes classification problem for modified double Poisson brackets rather challenging. On the other hand, Definition \ref{def:ModifiedDoubleBracket} is completely explicit for finitely generated algebra and it is enough to define its' action on generators. It is an interesting topic of further research to formulate an explicit condition (namely, that can be tested on generators only) which can replace (\ref{eq:ModifiedDoublePoissonJacobi}) and (\ref{eq:ModifiedDoublePoissonWeakSkewSymmetry}) in Definition \ref{def:ModifiedDoubleBracket}.

\appendix
\section{Spectral curve for Kontsevich system}

In this appendix we describe an application of modified double Poisson brackets to the noncommutative Integrable System suggested in \cite{Kontsevich'2011}. We show that it induces Liouville Integrable System on the moduli space of representations of the underlying associative algebra for small $N$. Moreover, using Proposition \ref{prop:H0BracketNXN} we show that the corresponding Hamilton flows extend to the derivations of the entire coordinate space of representations. Finally, from noncommutative Lax pair suggested in \cite{EfimovskayaWolf'2012} we construct the corresponding spectral curve and compute its genus.

\subsection{Kontsevich system}
Let $A=\mathbb C\langle u^{\pm1},v^{\pm1}\rangle$ denote the group algebra of the free group with two generators. Consider derivation $\frac{\textrm d}{\textrm dt}:A\rightarrow A$ defined by
\begin{align}
\left\{\begin{array}{l}
\dfrac{\textrm du}{\textrm dt}=uv-uv^{-1}-v^{-1},\\[10pt]
\dfrac{\textrm dv}{\textrm dt}=-vu+vu^{-1}+u^{-1}.
\end{array}\right.
\label{eq:KontsevihSystem}
\end{align}
Recall the modified double Poisson bracket (\ref{eq:DoubleBracketForKontsevichSystem})
\begin{align}
\db{u,v}_K=-vu\otimes1,\qquad
\db{v,u}_K=uv\otimes 1,\qquad \db{u,u}_K=\db{v,v}_K=0.
\label{eq:DoubleBracketForKontsevichSystemAppendix}
\end{align}
Denote the induced $H_0$-Poisson structure as
\begin{align}
\{,\}_K:A\otimes A\rightarrow A;\quad\forall a,b\in A,\;\{a,b\}_K=\mu(\db{a,b}_K).
\label{eq:H0BracketForKontsevichSystem}
\end{align}
Bracket (\ref{eq:DoubleBracketForKontsevichSystemAppendix}) was suggested in \cite{Arthamonov'2015} by the author to show the integrability of (\ref{eq:KontsevihSystem}). We have the following list of properties:
\begin{itemize}
\item Derivation (\ref{eq:KontsevihSystem}) is a generalized Hamilton flow w.r.t. bracket (\ref{eq:H0BracketForKontsevichSystem}), namely
    \begin{align*}
    \forall x\in A,\quad \frac{\textrm dx}{\textrm dt}=\{h,x\}_K,\qquad\textrm{where}\qquad
    h=u+v+u^{-1}+v^{-1}+u^{-1}v^{-1}.
    \end{align*}
\item There exists an infinite family of commuting flows, for all $k,j\in\mathbb N$
    \begin{align*}
    \frac{\textrm d}{\textrm dt_k}:A\rightarrow A,\quad\frac{\textrm d}{\textrm dt_k}(x):=\{h^k,x\};\qquad \left[\frac{\textrm d}{\textrm dt_k},\frac{\textrm d}{\textrm dt_j}\right]=0.
    \end{align*}
\item Group commutator $c=uvu^{-1}v^{-1}$ generates the subalgebra of right Casimirs of bracket (\ref{eq:H0BracketForKontsevichSystem})
    \begin{align*}
    \forall x\in A/[A,A],\;\forall C\in\mathbb C[c]:\quad \{x,C\}_K=0.
    \end{align*}
\end{itemize}

\subsection{Basic example. Matrix representations for $N=2$}

In this subsection we show that Kontsevich system induces a conventional Integrable System in the Liouville sense on the moduli space of 2-dimensional representations. Moreover, we show that Hamilton flows extend to the entire coordinate space of representations.

Define an 8-dimensional manifold $\mathcal M\subset R^8$ with coordinates $u_{11},u_{12},u_{21},u_{22},\-v_{11},v_{12},v_{21},v_{22}$ s.t. $u_{11}u_{22}-u_{12}u_{21}\neq0$ and $v_{11}v_{22}-v_{12}v_{21}\neq0$. Let
\begin{align}
\varphi(u)=\left(\begin{array}{cc}
u_{11}&u_{12}\\
u_{21}&u_{22}
\end{array}\right),
\qquad
\varphi(v)=\left(\begin{array}{cc}
v_{11}&v_{12}\\
v_{21}&v_{22}
\end{array}\right).
\label{eq:2X2RepresentationDefinition}
\end{align}
Then $\varphi:\,\mathcal M\times A\rightarrow Mat(2,\mathbb C)$ provides a representation of $A$ for each given $m\in\mathcal M$.

Algebra $\mathbb C[\mathcal M^{inv}]$ of all $GL(2,\mathbb C)$-invariant functions on $\mathcal M$ is generated by $\varphi_0\left(\{u,v,u^2,uv,v^2\}\right)$.
\begin{equation}
\begin{aligned}
t_1:=&\textrm{Tr}\,\varphi(u)=u_{11}+u_{22}\\
t_2:=&\textrm{Tr}\,\varphi(v)=v_{11}+v_{22}\\
t_3:=&\textrm{Tr}\,\varphi(uu)=u_{11}^2+2u_{12}u_{21}+u_{22}^2\\
t_4:=&\textrm{Tr}\,\varphi(vv)=u_{11}v_{11}+u_{21}v_{12}+u_{12}v_{21}+u_{22}v_{22}\\
t_5:=&\textrm{Tr}\,\varphi(vv)=v_{11}^2+2v_{12}v_{21}+v_{22}^2
\end{aligned}
\label{eq:tVariablesDefinition}
\end{equation}

In terms of variables $t$ defined in (\ref{eq:tVariablesDefinition}) we get the following brackets
\begin{equation}
\begin{aligned}
\{t_1,t_2\}^{inv}=&-t_4\\
\{t_1,t_3\}^{inv}=&0\\
\{t_1,t_4\}^{inv}=&\frac12\left(t_1^2t_2-t_2t_3-2t_1t_4\right)\\
\{t_1,t_5\}^{inv}=&t_1t_2^2-2t_2t_4-t_1t_5\\
\{t_2,t_3\}^{inv}=&-t_1^2t_2+t_2t_3+2t_1t_4\\
\{t_2,t_4\}^{inv}=&\frac12\left(-t_1t_2^2+2t_2t_4+t_1t_5\right)\\
\{t_2,t_5\}^{inv}=&0\\
\{t_3,t_4\}^{inv}=&t_1^3t_2-t_1t_2t_3-t_1^2t_4-t_3t_4\\
\{t_3,t_5\}^{inv}=&2\left(t_1^2t_2^2-2t_1t_2t_4-t_3t_5\right)\\
\{t_4,t_5\}^{inv}=&t_1t_2^3-t_2^2t_4-t_1t_2t_5-t_4t_5
\end{aligned}
\label{eq:KontsevichBracketN2t}
\end{equation}

The subalgebra of Casimir functions is generated by a single element $\textrm{Tr}\,\varphi(c)$ and one can check independently that symplectic leaf of (\ref{eq:KontsevichBracketN2t}) has dimension 4 (see Table \ref{tab:QuadraticBracketSymplecticLeafDimensions}). The two Hamilton functions $H_1=\textrm{Tr}\,\varphi(h)$ and $H_2=\textrm{Tr}\,\varphi(h^2)$ are algebraically independent. Next, $\{H_1,H_2\}^{inv}=0$ by the fact that $\{h^m,h^n\}\equiv 0\bmod[A,A]$ and Corollary \ref{cor:DerivationHomomorphism}. Thus, system (\ref{eq:KontsevihSystem}) induces an Integrable System in the Liouville sense on $\mathcal M^{inv}$.

By Proposition \ref{prop:H0BracketNXN} the full coordinate space of representations $\mathbb C[\mathcal M]$ is a Lie module of $\mathbb C[\mathcal M^{inv}]$ with a Lie algebra structure on $C[\mathcal M^{inv}]$ given by the Poisson bracket (\ref{eq:KontsevichBracketN2t}). In particular, this implies that $H_1$ and $H_2$ generate commuting flows on the full $\mathbb C[\mathcal M]$ which preserve $\mathbb C[\mathcal M^{inv}]$.

\subsection{Spectral Curve}

Although $\mathbb C[h]\rightarrow A/[A,A]$ provides a subspace of hamiltonians in involution it doesn't span the maximal commuting Lie subalgebra w.r.t. to the Lie bracket induced by (\ref{eq:H0BracketForKontsevichSystem}) (for example see eq. (26) on p. 1237 of \cite{Arthamonov'2015}). In \cite{EfimovskayaWolf'2012} O.~Efimovskaya and T.~Wolf suggested a noncommutative Lax pair with a spectral parameter for system (\ref{eq:KontsevihSystem}) and conjectured that it will provide all "trace integrals", elements of $A$ invariant under (\ref{eq:KontsevihSystem}) modulo $[A,A]$
\begin{align*}
\frac{\textrm d}{\textrm dt}L=[L,M],
\end{align*}
where
\begin{align}
L=\left(\begin{array}{cc}
v^{-1}+u&\lambda v+v^{-1}u^{-1}+u^{-1}+1\\
v^{-1}+\frac1\lambda u&v+v^{-1}u^{-1}+u^{-1}+\frac1\lambda
\end{array}\right),
\qquad
M=\left(\begin{array}{cc}
v^{-1}-v+u&\lambda v\\
v^{-1}&u
\end{array}\right).
\label{eq:LaxPair}
\end{align}
Denote the coefficients of the ``noncommutative spectral curve'' as $\textrm{Tr}\,L(\lambda)^k=:\sum_{j=-k}^kH_{k,j}\lambda^j$. We have checked that for all $k,m\leqslant5$ and arbitrary $j,l$
\begin{align*}
\{H_{k,j},H_{m,l}\}_K\equiv0\bmod [A,A].
\end{align*}
The latter combined with results of O.~Efimovskaya and T.~Wolf suggests the following
\begin{conjecture}
Image of $\textrm{Span}(H_{k,l})$ in $A/[A,A]$ under natural projection is a maximal commutative Lie subalgebra of $A/[A,A]$ w.r.t. to the Lie bracket induced by (\ref{eq:H0BracketForKontsevichSystem}).
\label{conj:MaximalCommutativeLieSubalgebra}
\end{conjecture}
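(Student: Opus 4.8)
Since the assertion is conjectural, what follows is a strategy rather than a complete argument. The statement splits into two essentially independent claims: (a) \emph{involutivity}, that $\{H_{k,j},H_{m,l}\}_K\equiv0\bmod[A,A]$ for \emph{all} $k,m$ and all $j,l$, so that the image of $\mathrm{Span}(H_{k,l})$ in $A/[A,A]$ is a commutative Lie subalgebra; and (b) \emph{maximality}, that nothing in $A/[A,A]$ outside this image commutes with all of it with respect to the Lie bracket induced by (\ref{eq:H0BracketForKontsevichSystem}).

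For (a) the plan is to upgrade the finite computational check (here carried out for $k,m\leqslant5$) to an all-orders statement by exhibiting a classical $r$-matrix structure for the Lax matrix (\ref{eq:LaxPair}). The most economical route is to pass to representations, where by Corollary~\ref{prop:InvariantPoisson} the bracket $\{,\}^{inv}$ on $\mathbb C_{\mathcal V}^{inv}$ is an honest Poisson bracket and each trace of a power of $\varphi(L(\lambda))$ is a genuine function whose $\lambda$-expansion produces $\varphi_0(H_{k,l})$. Starting from the explicit brackets (\ref{eq:KontevichBracketNxN}) one would compute $\{L(\lambda)_{1},L(\mu)_{2}\}^{\mathcal V}$ in tensor-leg notation and try to bring it to the form $[\,r_{12}(\lambda,\mu)\,,\,L(\lambda)_{1}\,]-[\,r_{21}(\lambda,\mu)\,,\,L(\mu)_{2}\,]$ for some matrix $r$, possibly depending on the dynamical variables; the noncommutative $M$-matrix of \cite{EfimovskayaWolf'2012} should essentially be a contraction of such an $r$ with $dh$, which both suggests the form of $r$ and provides a consistency check. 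Once this is in place, the standard manipulation gives $\{\mathrm{Tr}\,\varphi(L(\lambda))^k,\mathrm{Tr}\,\varphi(L(\mu))^m\}^{inv}=0$ for every $N$, hence $\varphi_0(\{H_{k,j},H_{m,l}\}_K)=0$ for every $N$ by Corollary~\ref{cor:DerivationHomomorphism}; since distinct conjugacy-class sums in the group algebra $A=\mathbb C\langle u^{\pm1},v^{\pm1}\rangle$ are separated by traces in sufficiently large matrix dimension, this forces $\{H_{k,j},H_{m,l}\}_K\equiv0\bmod[A,A]$. Alternatively one may attempt the same argument intrinsically, via a ``double $r$-matrix'' for $\db{\_}_K$, avoiding representations altogether.

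For (b) the plan is a dimension count on the moduli space. Combining the (conjectural) count of $N-1$ algebraically independent Casimirs of $\{,\}^{inv}$ with the symplectic-leaf dimensions of Table~\ref{tab:QuadraticBracketSymplecticLeafDimensions}, one would show that for each $N$ the functions $\varphi_0(H_{k,l})$ contain exactly $\tfrac12\dim L$ algebraically independent elements on a generic symplectic leaf, i.e.\ that the Efimovskaya--Wolf integrals form a Liouville-complete system. A complete commuting family has the property that its Poisson centralizer coincides with the family itself up to functional dependence; transporting this statement back through $\varphi_0$ for all $N$ then yields that $\mathrm{Span}(H_{k,l})$ is a maximal commutative Lie subalgebra of $A/[A,A]$. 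The input ``the Lax pair captures all trace integrals'' is precisely the conjecture of \cite{EfimovskayaWolf'2012}, so at minimum (b) is reduced to that statement together with the Casimir count.

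The main obstacle is step (a): it is not clear a priori that a closed-form $r$-matrix exists, and if the natural candidate is dynamical one must verify that the derivative terms do not obstruct involutivity, equivalently that the relevant dynamical Yang--Baxter-type identity holds modulo $[A,A]$ after taking traces. A secondary obstacle is that the maximality count in (b) rests on the still-open Casimir conjecture and on an all-$N$ algebraic-independence statement for the $H_{k,l}$, each of which may require its own argument; and the separation property of traces used in (a) should itself be justified for this group algebra.
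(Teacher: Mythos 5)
The statement you are addressing is left as a conjecture in the paper: the author offers no proof, only the computational verification that $\{H_{k,j},H_{m,l}\}_K\equiv0\bmod[A,A]$ for all $k,m\leqslant5$ together with the (themselves conjectural) results of \cite{EfimovskayaWolf'2012}. So there is no proof in the paper to compare yours against, and your submission is, by your own account, a research plan rather than an argument. As a plan it is sensible and correctly identifies the two halves of the claim, but both halves contain genuine gaps beyond the ones you flag.

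For involutivity, the entire weight rests on the existence of a (possibly dynamical) $r$-matrix for $\varphi(L(\lambda))$, which you hypothesize but do not exhibit; nothing in the paper supplies it, and the concluding section's discussion of equation (\ref{eq:YangBaxtexDefect}) shows that the usual double-bracket Yang--Baxter machinery fails for $\db{\_}_K$, so the standard route to such an $r$-matrix is specifically obstructed here. The passage back from $\varphi_0(\{H_{k,j},H_{m,l}\}_K)=0$ for all $N$ to $\{H_{k,j},H_{m,l}\}_K\equiv0\bmod[A,A]$ also needs the linear independence of trace functions of distinct conjugacy classes of the free group over all $N$, which you mention but do not establish. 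The more serious gap is in maximality: even granting the Casimir conjecture, the Liouville-completeness count, and the Efimovskaya--Wolf conjecture, a dimension count on each $\mathrm{Rep}_N$ only shows that any $\varphi_0(x)$ Poisson-commuting with all $\varphi_0(H_{k,l})$ is \emph{functionally} (algebraically) dependent on them on a generic symplectic leaf. Maximality of the image of $\textrm{Span}(H_{k,l})$ as a commutative \emph{Lie subalgebra} of $A/[A,A]$ requires the much stronger conclusion that such an $x$ lies in the \emph{linear} span of the $H_{k,l}$ modulo $[A,A]$; functional dependence for every finite $N$ does not deliver linear membership in the noncommutative algebra, and no mechanism in your proposal bridges that gap. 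Until these points are addressed, the statement should remain a conjecture, as the paper presents it.
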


For each $N$ and each general point in $\mathcal A_N$, Lax matrix (\ref{eq:LaxPair}) gives rise to an algebraic curve in $\lambda$ and $\nu$
\begin{align}
\det(\varphi(L(\lambda))-\nu)=0.
\label{eq:KontsevichSpectralCurve}
\end{align}
The coefficients of the curve belong to the subalgebra $\mathbb C[\varphi_0(H_{k,l})]\subset\mathcal A_N^{inv}$ of invariant polynomials generated by $\varphi_0(H_{k,l})$. Assuming Conjecture \ref{conj:MaximalCommutativeLieSubalgebra}, by Corollary \ref{cor:DerivationHomomorphism} we get
\begin{corollary}
Subalgebra $\mathbb C[\varphi_0(H_{k,l})]$ generated by the coefficients of the spectral curve (\ref{eq:KontsevichSpectralCurve}) is Poisson commutative with respect to the induced bracket $\{,\}^{inv}$.
\end{corollary}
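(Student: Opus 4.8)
The plan is to reduce the statement to an application of Corollary~\ref{cor:DerivationHomomorphism} together with Conjecture~\ref{conj:MaximalCommutativeLieSubalgebra}. First I would recall that, by definition, the coefficients $H_{k,l}\in A$ of the noncommutative spectral curve span (the image of) a commutative Lie subalgebra of $A/[A,A]$ with respect to the Lie bracket induced by (\ref{eq:H0BracketForKontsevichSystem}); that is, for all admissible indices $\{H_{k,j},H_{m,l}\}_K\equiv0\bmod[A,A]$. This is precisely the content we are granted by assuming Conjecture~\ref{conj:MaximalCommutativeLieSubalgebra} (and it is the statement that has been checked directly for $k,m\leqslant5$).

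Next I would transport this commutativity to the representation scheme via Corollary~\ref{cor:DerivationHomomorphism}, which asserts $\{\varphi_0(a),\varphi(b)\}^{\mathcal V}=\varphi(\{a,b\})$ for all $a,b\in A$. Applying $\varphi_0=\mathrm{Tr}\circ\varphi$ and taking $a=H_{k,j}$, $b=H_{m,l}$, one obtains
\begin{align*}
\{\varphi_0(H_{k,j}),\varphi_0(H_{m,l})\}^{inv}=\varphi_0\bigl(\{H_{k,j},H_{m,l}\}_K\bigr).
\end{align*}
Since $\{H_{k,j},H_{m,l}\}_K$ lies in $[A,A]$, and since $\varphi_0$ factors through $A/[A,A]$, the right-hand side vanishes. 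Hence the generators $\varphi_0(H_{k,l})$ of the subalgebra $\mathbb C[\varphi_0(H_{k,l})]$ pairwise Poisson-commute.

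Finally I would promote pairwise commutativity of generators to commutativity of the whole subalgebra. Because $\{,\}^{inv}$ satisfies the Leibniz identity in both arguments (it is a Poisson bracket by Corollary~\ref{prop:InvariantPoisson}), any bracket $\{f,g\}^{inv}$ with $f,g$ arbitrary polynomials in the $\varphi_0(H_{k,l})$ expands, via the Leibniz rule, into an $\mathbb C[\varphi_0(H_{k,l})]$-linear combination of brackets of generators, each of which is zero; therefore $\{f,g\}^{inv}=0$. This establishes that $\mathbb C[\varphi_0(H_{k,l})]$ is Poisson commutative. I do not anticipate a genuine obstacle here: the only nontrivial input is Conjecture~\ref{conj:MaximalCommutativeLieSubalgebra}, which we are explicitly permitted to assume, and everything else is a formal consequence of Corollary~\ref{cor:DerivationHomomorphism} and the Leibniz property; the mildest care needed is simply to note that the coefficients of (\ref{eq:KontsevichSpectralCurve}) are (up to sign and combinatorial factors) the $\varphi_0(H_{k,l})$, so that the subalgebra in question is exactly the one generated by these trace functions.
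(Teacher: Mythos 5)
Your proposal is correct and follows essentially the same route as the paper, which simply invokes Conjecture~\ref{conj:MaximalCommutativeLieSubalgebra} together with Corollary~\ref{cor:DerivationHomomorphism} (the paper leaves the trace-over-the-second-index step, i.e.\ Lemma~\ref{lemm:BracketNSkewSymmetricRestriction}, and the Leibniz extension to the generated subalgebra implicit, whereas you spell them out). No discrepancies to report.
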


Below we present a Newton graphs of the curve (\ref{eq:KontsevichSpectralCurve}) for small $N$. For each $N$ the spectral curve appears to be highly singular, we have computed its arithmetic genus for general point in $\mathcal A_N$ for $N\leqslant4$. Table \ref{tab:KontsevichSpectralCurveGenus} suggests that the genus is equal to $N^2$.

\begin{figure}[h!]
\begin{subfigure}[b]{0.45\linewidth}
\includegraphics[width=\linewidth]{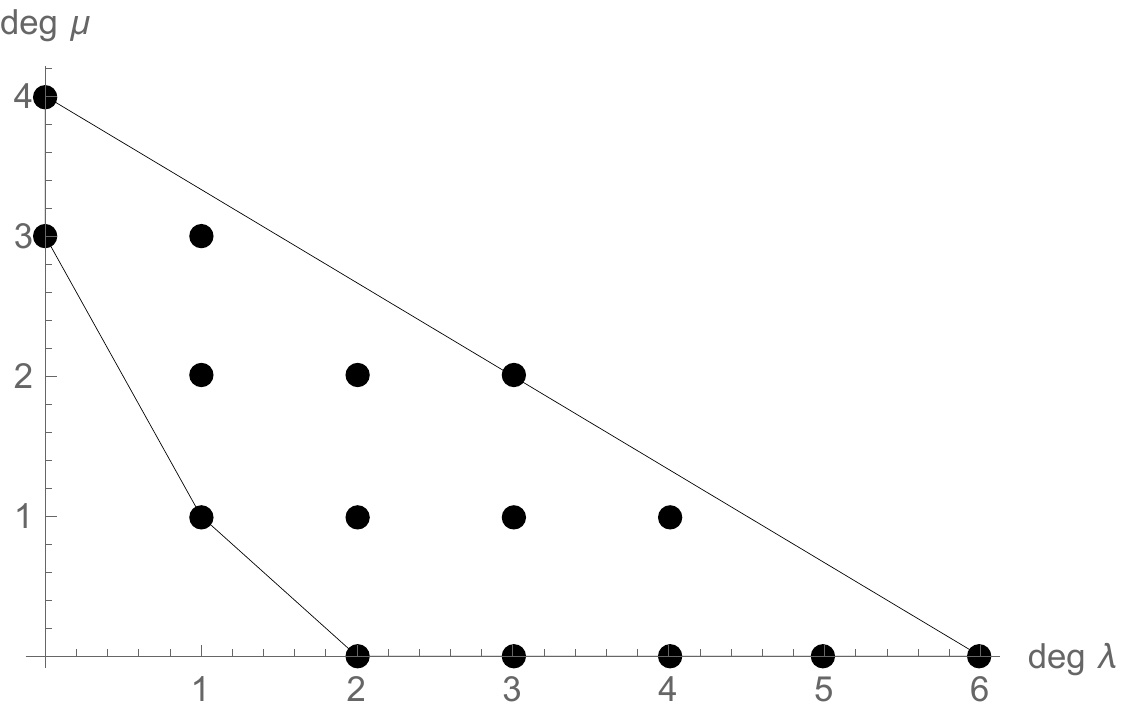}
\caption{$N=2$}
\end{subfigure}
\hspace{0.04\linewidth}
\begin{subfigure}[b]{0.45\linewidth}
\includegraphics[width=\linewidth]{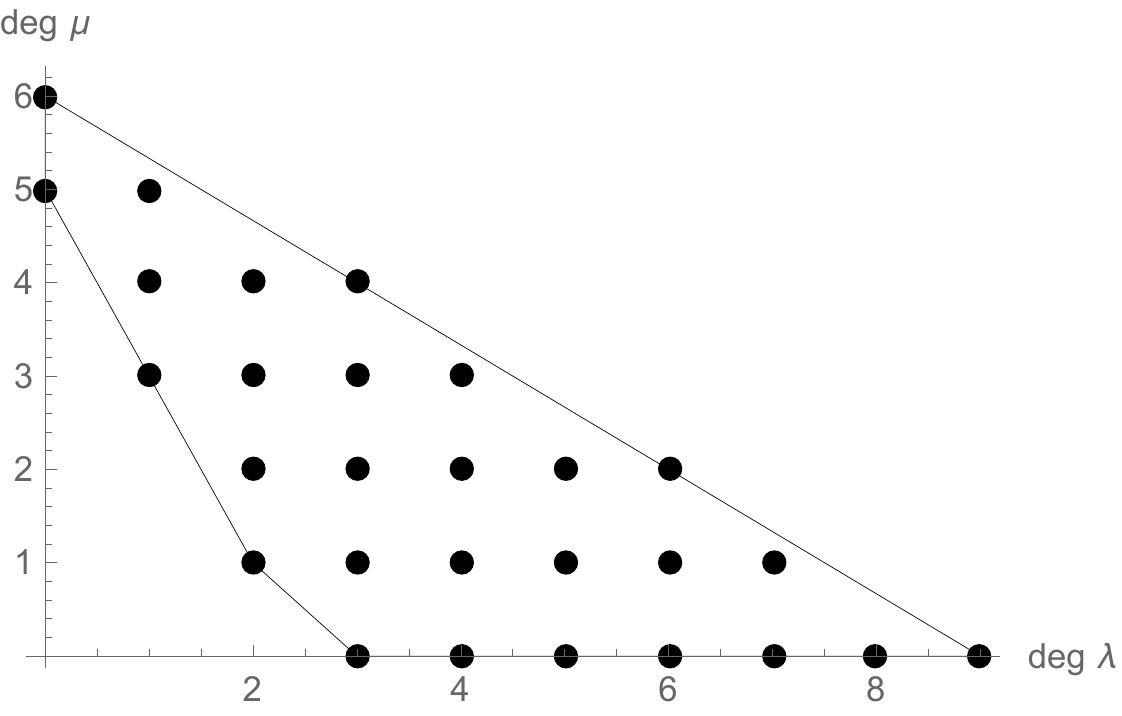}
\caption{$N=3$}
\end{subfigure}\\[10pt]
\begin{subfigure}[b]{0.45\linewidth}
\includegraphics[width=\linewidth]{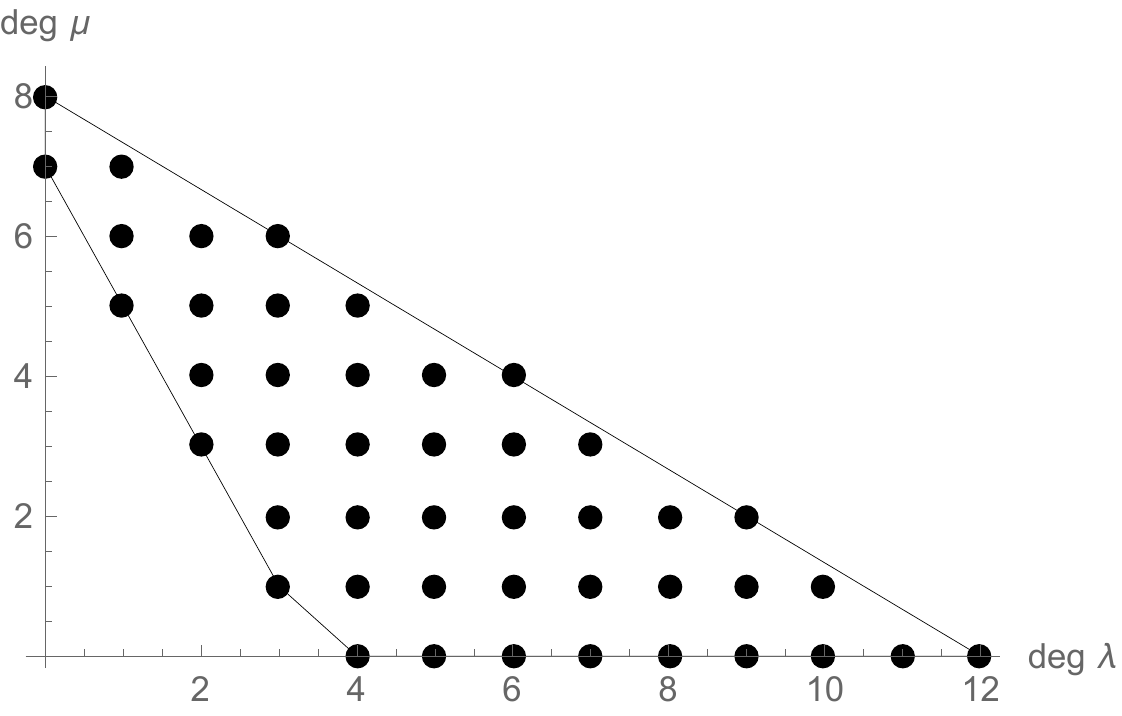}
\caption{$N=4$}
\end{subfigure}
\hspace{0.04\linewidth}
\begin{subfigure}[b]{0.45\linewidth}
\includegraphics[width=\linewidth]{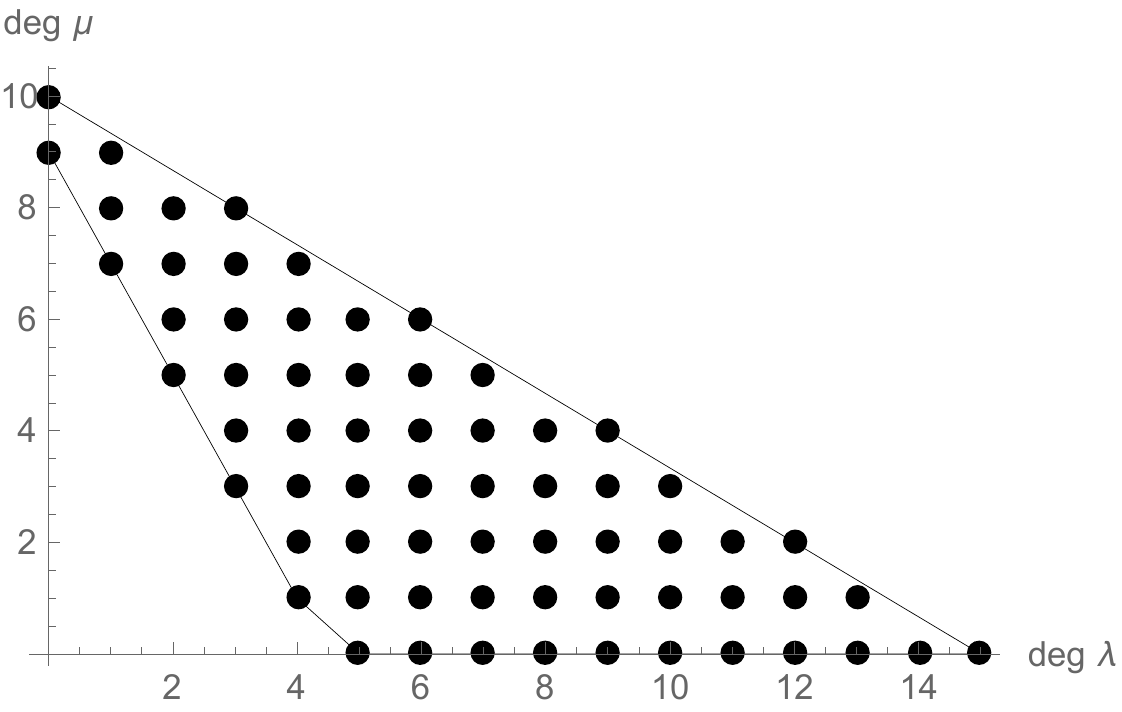}
\caption{$N=5$}
\end{subfigure}
\caption{Newton graphs of the Spectral curve}
\label{fig:NewtonGraphsForKontsevichSpectralCurve}
\end{figure}

\begin{table}[h!]
\begin{tabular}{c|cccccc}
$N$&genus&degree\\
\hline
2&4&6\\
3&9&9\\
4&16&12
\end{tabular}
\vspace{0.15cm}
\caption{Genus of the Spectral curve}
\label{tab:KontsevichSpectralCurveGenus}
\end{table}

\bibliographystyle{alpha}
\bibliography{references}

\begin{thebibliography}{PVdW08}

\bibitem[Arn78]{Arnold}
V.~I. Arnold.
\newblock {\em Mathematical Methods of Classical Mechanics}.
\newblock Springer-Verlag, New York and Berlin, 1978.

\bibitem[Art15]{Arthamonov'2015}
S.~Arthamonov.
\newblock {Noncommutative inverse scattering method for the Kontsevich system.}
\newblock {\em {Lett. Math. Phys.}}, 105(9):1223--1251, 2015.

\bibitem[BT16]{BartocciTacchella'2016}
Claudio Bartocci and Alberto Tacchella.
\newblock Poisson-nijenhuis structures on quiver path algebras.
\newblock {\em arXiv preprint arXiv:1604.02012}, 2016.

\bibitem[CB99]{Crawley-Boevey'1999}
William Crawley-Boevey.
\newblock Preprojective algebras, differential operators and a conze embedding
  for deformations of kleinian singularities.
\newblock {\em Commentarii Mathematici Helvetici}, 74(4):548--574, 1999.

\bibitem[CB11]{Crawley-Boevey'2011}
William Crawley-Boevey.
\newblock Poisson structures on moduli spaces of representations.
\newblock {\em Journal of Algebra}, 325(1):205 -- 215, 2011.

\bibitem[CBEG07]{Crawley-BoeveyEtingofGinzburg'2007}
William Crawley-Boevey, Pavel Etingof, and Victor Ginzburg.
\newblock Noncommutative geometry and quiver algebras.
\newblock {\em Advances in Mathematics}, 209(1):274 -- 336, 2007.

\bibitem[dB08]{VandenBergh'2008}
Michael~Van den Bergh.
\newblock Double {P}oisson algebras.
\newblock {\em Trans. Amer. Math. Soc.}, 360:5711--5769, 2008.

\bibitem[EW12]{EfimovskayaWolf'2012}
Olga Efimovskaya and Thomas Wolf.
\newblock On {I}ntegrability of the {K}ontsevich {N}on-{A}belian {O}{D}{E}
  system.
\newblock {\em Letters in Mathematical Physics}, 100(2):161--170, 2012.

\bibitem[Kon93]{Kontsevich'1993}
Maxim Kontsevich.
\newblock Formal (non)-commutative symplectic geometry.
\newblock In Israel~M. Gelfand, Lawrence Corwin, and James Lepowsky, editors,
  {\em The Gelfand Mathematical Seminars, 1990--1992}, pages 173--187.
  Birkhauser Boston, 1993.

\bibitem[Kon11]{Kontsevich'2011}
Maxim Kontsevich.
\newblock Noncommutative {I}dentities.
\newblock {\em arXiv:1109.2469}, 2011.

\bibitem[MT15]{MassuyeauTuraev'2015}
Gwenael Massuyeau and Vladimir Turaev.
\newblock Brackets in representation algebras of hopf algebras.
\newblock {\em arXiv:1508.07566}, 2015.

\bibitem[ORS12]{OdesskiiRubtsovSokolov'2012}
A.V. Odesskii, V.N. Rubtsov, and V.V. Sokolov.
\newblock Bi-{H}amiltonian ordinary differential equations with matrix
  variables.
\newblock {\em Theoretical and Mathematical Physics}, 171(1):442--447, 2012.

\bibitem[ORS13]{OdesskiiRubtsovSokolov'2013}
A.~Odesskii, V.~Rubtsov, and V.~Sokolov.
\newblock Poisson brackets on free associative algebras.
\newblock {\em Conteprorary Mathematics}, 592:295, 2013.

\bibitem[Pow16]{Powell'2016}
Geoffrey Powell.
\newblock On double {P}oisson structures on commutative algebras.
\newblock {\em arXiv preprint arXiv:1603.07553}, 2016.

\bibitem[Pro76]{Procesi'1976}
Claudio Procesi.
\newblock The invariant theory of n$\times$ n matrices.
\newblock {\em Advances in Mathematics}, 19(3):306--381, 1976.

\bibitem[PVdW08]{PichereauVandeWeyer'2008}
Anne Pichereau and Geert Van~de Weyer.
\newblock Double {P}oisson cohomology of path algebras of quivers.
\newblock {\em Journal of Algebra}, 319(5):2166--2208, 2008.

\bibitem[Tur14]{Turaev'2014}
Vladimir Turaev.
\newblock Poisson--gerstenhaber brackets in representation algebras.
\newblock {\em Journal of Algebra}, 402:435--478, 2014.

\end{thebibliography}

\end{document}